\documentclass[12pt]{amsart}
\usepackage[margin=1in]{geometry}

\usepackage{amssymb}
\usepackage{mathtools}
\usepackage{enumitem}
\usepackage{array}
\usepackage{hyperref}
\usepackage{microtype}

\newcommand\D{\mathcal D}
\newcommand\R{\mathbb R}
\newcommand\Z{\mathbb Z}
\newcommand\tint{\begingroup\textstyle\int\endgroup}
\newcommand\tdashint{\begingroup\textstyle\dashint\endgroup}
\newcommand\tesssup{\begingroup\textstyle\esssup\endgroup}

\DeclareMathOperator*{\esssup}{ess\,sup}
\DeclareMathOperator*{\essinf}{ess\,inf}
\DeclareMathOperator*{\BMO}{BMO}

\DeclareMathOperator*{\Mu}{Mu}

\def\XXint#1#2#3{{\setbox0=\hbox{$#1{#2#3}{\int}$}
		\vcenter{\hbox{$#2#3$}}\kern-.5\wd0}}
\def\Xint#1{\mathchoice
	{\XXint\displaystyle\textstyle{#1}}
	{\XXint\textstyle\scriptstyle{#1}}
	{\XXint\scriptstyle\scriptscriptstyle{#1}}
	{\XXint\scriptscriptstyle\scriptscriptstyle{#1}}
	\!\int}
\def\dashint{\Xint-}

\theoremstyle{plain}
\newtheorem{theorem}{Theorem}[section]
\newtheorem{proposition}[theorem]{Proposition}
\newtheorem{lemma}[theorem]{Lemma}
\newtheorem{corollary}[theorem]{Corollary}

\theoremstyle{definition}
\newtheorem{definition}[theorem]{Definition}
\newtheorem{example}[theorem]{Example}

\theoremstyle{remark}
\newtheorem{remark}[theorem]{Remark}

\numberwithin{equation}{section}

\begin{document}

\title[One-sided median porosity and distance functions]{One-sided median porous sets and one-sided Muckenhoupt distance functions}

\author[A. C. Goksan]{Alptekin Can Goksan}
\address{Department of Mathematics, University of Toronto, Toronto, Ontario, Canada}
\email{a.goksan@mail.utoronto.ca}

\author[I. Uriarte-Tuero]{Ignacio Uriarte-Tuero}
\address{Department of Mathematics, University of Toronto, Toronto, Ontario, Canada}
\email{ignacio.uriartetuero@utoronto.ca}

\keywords{one-sided Muckenhoupt weight, BMO, median porosity, weak porosity, distance function}
\subjclass[2020]{28A75, 42B25, 42B35, 42B37}

\begin{abstract}

We introduce the notion of one-sided median porosity for subsets $E$ of $\R$. We prove that this condition is necessary and sufficient for the distance weight $d_E^{\,-\alpha}$ to belong to a one-sided Muckenhoupt $A_p$ class for some $\alpha>0$ and $1<p<\infty$. As part of the proof, we obtain new characterizations of one-sided $A_p$ weights and one-sided $\BMO$ functions, in terms of medians. It was recently shown that $d_E^{\,-\alpha}$ is a one-sided Muckenhoupt $A_1$ weight for some $\alpha>0$ if and only if $E$ is one-sided weakly porous. In this paper, we find the precise range of exponents $\alpha>0$ such that $d_E^{\,-\alpha}$ belongs to a one-sided $A_p$ class, both for $p=1$ and for $1<p<\infty$. In addition, we show that $E$ is median porous if and only if it is both left and right median porous, and we give an example of a one-sided median porous set which is neither median porous nor one-sided weakly porous.

\end{abstract}

\maketitle

\section{Introduction}
\label{sec_intro}

Muckenhoupt $A_p$ weights are one of the main objects of study in harmonic analysis and have many applications in PDE. Their importance stems from the fact that, for any $1 < p < \infty$, singular integrals and maximal operators are bounded on $L^p(w)$ if and only if the weight $w$ belongs to the $A_p$ class (see \cite{GR} and the references therein). In recent years, there has been interest in weights of the form
\[
w_{E,\alpha}(x) = d_E(x)^{\,-\alpha}, \qquad x \in \R^n,
\]
where $E \subset \R^n$ is a nonempty set, $d_E(x)$ is the distance from $x$ to $E$, and $\alpha>0$. We refer to such weighs as \emph{distance weights}. The goal is to find geometric properties of the set $E$ which are necessary and sufficient for $w_{E,\alpha}$ to belong to an $A_p$ class. Such characterizations have important implications for a number of topics in harmonic analysis and PDE, such as quasiadditivity of Riesz capacities \cite{A}, regularity of solutions to various classes of PDE \cite{H,ACDT,DL}, and a geometric description of domains that support Hardy-Sobolev inequalities \cite{KLV,LV,DILTV}.

One of the first results on distance weights was that, if $E$ is porous, then $w_{E,\alpha} \in A_1$ if and only if $0 \le \alpha < n - \dim_A(E)$, where $\dim_A(E)$ is the Assouad dimension of $E$ \cite{DILTV}. Recall that a set $E$ is said to be \emph{porous} if every ball in $\R^n$ of sufficiently small radius contains a ball of comparable radius which is \emph{$E$-free} (i.e.\ disjoint from $E$). (See \cite{S2} for a survey of porosity and its connections to various notions of dimension in geometric measure theory.) Earlier, it had been shown that, if $E \subset \mathbb{T}$, where $\mathbb{T}$ is the unit circle in $\R^2$, then $w_{E,\alpha} \in A_1(\mathbb{T})$ for some $\alpha>0$ if and only if $E$ satisfies a condition called \emph{weak porosity} \cite{V}. Recently, the definition of weak porosity was extended to arbitrary subsets of $\R^n$ by Anderson, Lehrb\"ack, Mudarra and V\"ah\"akangas \cite{ALMV}, who subsequently proved the following elegant theorem.

\begin{theorem}
	\label{thm_ALMV1}
	Let $\emptyset \ne E \subset \R^n$. Then the following are equivalent:
	\begin{enumerate}[label=\upshape(\alph*)]
		\item There exists $\alpha>0$ such that $d_E^{\,-\alpha} \in A_1$.
		\item $E$ is $s$-weakly porous for all $0<s<1$.
		\item $E$ is $s$-weakly porous for some $0<s<1$.
	\end{enumerate}
\end{theorem}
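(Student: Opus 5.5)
The equivalence is due to Anderson, Lehrb\"ack, Mudarra and V\"ah\"akangas \cite{ALMV}. If I were to prove it, I would work with cubes, using the formulation of $s$-weak porosity in which, for every cube $Q$, there are pairwise disjoint $E$-free cubes $Q_i\subset Q$ with $\ell(Q_i)\ge\delta\,r(Q)$ and $\sum_i|Q_i|\ge s|Q|$. Here $r(Q)$ is the side of a largest $E$-free subcube of $Q$, and $\delta=\delta(s)$. The implication (b)$\Rightarrow$(c) is trivial, so the plan is to prove (a)$\Rightarrow$(b) and (c)$\Rightarrow$(a).

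\emph{A preliminary geometric fact.} For every $x\in Q$ we have $d_E(x)\lesssim r(Q)$. If $d_E(x)\ll\ell(Q)$, a subcube of $Q$ of side comparable to $d_E(x)$ near $x$ is $E$-free. If $d_E(x)\gtrsim\ell(Q)$, then $Q$ itself essentially is $E$-free and $r(Q)\simeq\ell(Q)$. Consequently
\[
\essinf_Q d_E^{\,-\alpha}\gtrsim r(Q)^{-\alpha}.
\]

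\emph{(a)$\Rightarrow$(b).} Assume $d_E^{\,-\alpha}\in A_1$ and fix $Q$ and $0<s<1$. Let $Q_0\subset Q$ be a maximal $E$-free cube. Since $d_E^{\,-\alpha}\lesssim r(Q)^{-\alpha}$ on the middle half of $Q_0$, the $A_1$ condition gives
\[
\dashint_Q d_E^{\,-\alpha}\lesssim [w]_{A_1}\,r(Q)^{-\alpha}.
\]
By Chebyshev, $|\{x\in Q: d_E(x)<\lambda^{-1/\alpha}r(Q)\}|\le C\lambda^{-1}|Q|$. Choosing $\lambda$ large, the set $\{d_E\ge\lambda^{-1/\alpha}r(Q)\}$ fills a fraction at least $(1+s)/2$ of $Q$. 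Around each of its points, say at distance at least a small multiple of $\ell(Q)\lambda^{-1/\alpha}$ from $\partial Q$, there is an $E$-free cube inside $Q$ of side $\gtrsim\lambda^{-1/\alpha}r(Q)$ containing the point. The boundary layer is handled by noting that its measure is small relative to $|Q|$.

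A Vitali/Besicovitch selection, or better a dyadic Whitney decomposition of this set inside $Q$, then produces disjoint $E$-free cubes with $\ell(Q_i)\ge\delta r(Q)$ whose union has measure at least $s|Q|$. Since $\lambda$ can be taken arbitrarily large, this works for every $s$.

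\emph{(c)$\Rightarrow$(a).} Take $\alpha>0$ small. By the preliminary fact it suffices to show
\[
\dashint_Q d_E^{\,-\alpha}\lesssim r(Q)^{-\alpha}.
\]
The key is a decay estimate: there exist $\eta>0$ and $\theta\in(0,1)$ such that
\[
|\{x\in Q: d_E(x)<\theta^k r(Q)\}|\le(1-s')^k|Q|.
\]

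One step of this comes from $s$-weak porosity. The middle halves of the cubes $Q_i$ lie in $\{d_E\gtrsim\delta r(Q)\}$ and occupy a fraction $s'\simeq 2^{-n}s$ of $Q$. To iterate, I would run a stopping-time argument. Decompose $Q\setminus\bigcup_i\tfrac12Q_i$ into dyadic subcubes $P$, and apply weak porosity in each $P$, using $r(P)\le r(Q)$. The difficulty is to get geometric decay in the threshold rather than merely in the cube size. For this I would select, inside each generation, maximal dyadic cubes $P$ with $r(P)\le\theta r(Q)$; on the complement of their union, $d_E$ is already $\gtrsim\theta^{C} r(Q)$.

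Summing the layer-cake integral gives
\[
\dashint_Q d_E^{\,-\alpha}\lesssim r(Q)^{-\alpha}\sum_k\theta^{-k\alpha}(1-s')^k,
\]
which is finite once $\theta^{-\alpha}(1-s')<1$.

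\emph{Main obstacle.} I expect the hardest step to be making this iteration rigorous: controlling how $r(P)$ compares across generations, and making sure the cubes where no further progress is made are counted correctly.
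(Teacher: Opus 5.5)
The paper does not prove this theorem; it quotes it from \cite{ALMV}, so your proposal has to stand on its own. The implications (b)$\Rightarrow$(c) and (a)$\Rightarrow$(b) are fine. Your (a)$\Rightarrow$(b) argument is the same Chebyshev mechanism the paper uses in Lemma~\ref{lem_main1}. It needs neither a boundary layer nor a covering lemma: every dyadic subcube of $Q$ of diameter less than $\rho=\lambda^{-1/\alpha}r(Q)$ that meets $\{d_E\ge\rho\}$ is already $E$-free. Your ``preliminary fact'' is false as stated. For $E=\{0\}$ and $Q=[100,101]$ one has $r(Q)=1$ but $d_E\approx 100$ on $Q$. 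It holds only when $d(Q,E)\lesssim\ell(Q)$ (cf.\ Lemma~\ref{lem_dist2}(e)). Cubes far from $E$ are trivial, since $d_E$ is essentially constant on them, so this is easily repaired.

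The genuine gap is the decay estimate in (c)$\Rightarrow$(a). It is the entire content of that implication; the layer-cake step afterwards is routine, as in Lemma~\ref{lem_exp2}. You leave it unproved. Moreover, the stopping time you sketch cannot produce it: it only tests the hypothesis on $Q$ and its dyadic descendants, and that information is insufficient. Take $n=1$, $Q=(0,1)$ and $E\cap[0,1]=\{\tfrac12+j2^{-N}:0\le j\le 2^{N-1}\}$.
\begin{itemize}
\item $Q$ and every dyadic subinterval of $Q$ satisfy your porosity condition with $s=\tfrac12$ and $\delta=1$, uniformly in $N$.
\item Yet $d_E\le 2^{-N-1}$ on $(\tfrac12,1)$, so $|\{x\in Q:d_E(x)<\theta^k r(Q)\}|\ge\tfrac12|Q|$ for all $k<N/\log_2(1/\theta)$.
\item The stopping cube $P=(\tfrac12,1)$ carries half of $Q$. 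Restarting inside it only gives decay below $r(P)=2^{-N}$, not between $2^{-N}$ and $\theta r(Q)$.
\end{itemize}
This is exactly the collapse of $r(\cdot)$ you flag as the main obstacle, and it cannot be controlled inside the dyadic tree.

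The missing idea is to test weak porosity on non-dyadic intervals or cubes that straddle a large $E$-free cube and the adjacent region where the free scale is small. In the example, $(\tfrac12-K2^{-N},\tfrac34)$ with $K>1/\delta$ violates the hypothesis once $N$ is large. In one dimension this gives the lemma you need.
\begin{itemize}
\item Let $J$ be a maximal run of gaps shorter than $\delta r(Q)$, adjacent to a long gap $G$. Test on $G'\cup J$, where $G'\subset G$ is a terminal segment of length just above $r(J)/\delta$. This yields either $r(J)\ge\delta^2 r(Q)$ or $|J|\le\frac{1-s}{s\delta}\,r(J)$.
\item Runs with $r(J)\ge\kappa\delta^2 r(Q)$ feed the induction with the threshold lowered by a fixed factor.
\item Runs with $r(J)<\kappa\delta^2 r(Q)$ have length $\lesssim_s\kappa\delta r(Q)$. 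There are at most $1+|Q|/(\delta r(Q))$ of them, so their total measure is $\lesssim_s\kappa|Q|$.
\item Apply the induction hypothesis inside these runs as well and take $\kappa$ small; then the induction on $k$ closes.
\end{itemize}
In $\R^n$ you need an analogue of this step, and it is the nontrivial part of the implication. Your proposal does not supply it.
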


Roughly speaking, a set $E$ is weakly porous if every cube $Q$ in $\R^n$ contains pairwise disjoint $E$-free subcubes whose side lengths are bounded below by a constant times the side length of a maximal $E$-free subcube of $Q$ and whose volumes add up to at least a constant times the volume of $Q$. (See Definition \ref{def_porous}(a) and the paragraph following Definition \ref{def_porous} for the precise definition in the case $n=1$.) Thus, Theorem \ref{thm_ALMV1} relates the $A_1$ property of the weight $w_{E,\alpha}$ to geometric properties of the set $E$. It was later shown that Theorem \ref{thm_ALMV1} can be extended to metric spaces equipped with doubling measures \cite{M} and, more generally, to spaces of homogeneous type \cite{AGG}.

Various papers that addressed the case $p=1$ contained partial results for the case $1<p<\infty$, which became a salient question. Using a novel characterization of the $\BMO$ space, Pasquariello and the second named author obtained a geometric characterization of the sets $E$ for which $w_{E,\alpha} \in A_p$ for some $\alpha>0$ and $1<p<\infty$ \cite{PU}. They introduced the notion of \emph{median porosity} and proved the following theorem.

\begin{theorem}
	\label{thm_PU1}
	Let $\emptyset \ne E \subset \R^n$. Then the following are equivalent:
	\begin{enumerate}[label=\upshape(\alph*)]
		\item There exist $\alpha>0$ and $1<p<\infty$ such that $d_E^{\,-\alpha} \in A_p$.
		\item $E$ is $(s,t)$-median porous for all $0<s<t<1$.
		\item $E$ is $(s,t)$-median porous for some $0<s<t<1$.
	\end{enumerate}
\end{theorem}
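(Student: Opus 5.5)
The plan is to reduce both conditions to a statement about $\log d_E$ in $\BMO$, and then use the special structure of distance functions to pass between quantile (median) information and full $\BMO$ control.

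\textbf{Step 1: analytic reduction.} Write $f=\log d_E$ and $w=d_E^{-\alpha}=e^{-\alpha f}$. I would first show that (a) holds if and only if $f\in\BMO$.
\begin{itemize}
\item If $w\in A_p$, then $\log w\in\BMO$ by the standard Jensen argument applied to the $A_p$ condition for $w$ and for $w^{1-p'}$. Hence $f=-\alpha^{-1}\log w\in\BMO$.
\item Conversely, if $f\in\BMO$, the John--Nirenberg inequality gives $e^{\delta f}\in A_2$ and $e^{-\delta f}\in A_2$ for all small $\delta>0$ (Garnett--Jones). So $d_E^{-\alpha}\in A_2$ for small $\alpha>0$.
\end{itemize}
Thus (a) is equivalent to $\log d_E\in\BMO$, and the theorem becomes a geometric description of when $\log d_E\in\BMO$.

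\textbf{Step 2: median characterization.} For a cube $Q$ and $0<u<1$, let $m_u(Q)$ denote the $u$-quantile of $f$ on $Q$. I understand $(s,t)$-median porosity as saying, geometrically, that $m_t(Q)-m_s(Q)\le C$ uniformly in $Q$. Here the upper quantile is essentially $\log$ of the side length of a maximal $E$-free subcube of $Q$, and the lower quantile reflects how much of $Q$ lies far from $E$. The intended precise definition would be matched to exactly this form.
\begin{itemize}
\item \emph{(a) $\Rightarrow$ (b).} If $f\in\BMO$, John--Nirenberg gives exponential decay of $|\{x\in Q:|f-f_Q|>\lambda\}|/|Q|$. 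Hence every quantile $m_u(Q)$ with $u\in[s,t]$ lies within $C(s,t)\|f\|_{\BMO}$ of $f_Q$. This gives $m_t(Q)-m_s(Q)\le C$ for all $0<s<t<1$, which is (b).
\item \emph{(b) $\Rightarrow$ (c).} This is trivial.
\end{itemize}

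\textbf{Step 3: the hard direction (c) $\Rightarrow$ (a), i.e.\ $\log d_E\in\BMO$.} This is where the structure of $d_E$ must be used, because for a general function the bound $m_t-m_s\le C$ for a single pair $s<t$ does not imply $\BMO$. Two features of $d_E$ help.
\begin{enumerate}
\item The upper tail is harmless. Since $d_E$ is $1$-Lipschitz, $\sup_Q d_E\lesssim \ell(Q)+\operatorname{dist}(Q,E)$. This is comparable to the side of a maximal $E$-free subcube of the dilate $3Q$, and that in turn is comparable to $e^{m_t(3Q)}$ up to the porosity constant. So $f\le m_s(Q)+C$ on $Q$, and the positive part of $f-m_s(Q)$ is bounded.
\item The lower tail needs a good-$\lambda$ argument. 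I would show that $|\{x\in Q: f<m_s(Q)-k\}|$ decays geometrically in $k$. The idea is to run a Calder\'on--Zygmund/stopping-time decomposition of $Q$ into maximal dyadic subcubes $Q_j$ on which $\sup_{Q_j} d_E\le e^{-K}e^{m_s(Q)}$. Applying the condition at scale $Q_j$, a fixed proportion $1-s$ of each $Q_j$ has $f\ge m_s(Q_j)$. This forces $\sum|Q_j|\le \theta|Q|$ with $\theta<1$, and iterating gives the geometric decay.
\end{enumerate}
Combining the two bounds shows $\dashint_Q|f-m_s(Q)|\le C$ for every cube $Q$, so $f\in\BMO$, and (a) follows by Step 1.

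I expect the main obstacle to be Step 3(2). Controlling the measure of the deep sublevel sets of $d_E$ from a single quantile condition requires a careful choice of stopping cubes. In particular, it must use that small $d_E$ forces nearby points of $E$, so that nested cubes inherit the quantile control from their parents.
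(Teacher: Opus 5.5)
The paper does not prove this theorem; it cites \cite{PU}. Your Step~1 and the direction (a)$\Rightarrow$(b) are sound and follow the same skeleton as \cite{PU} and as this paper's one-sided proof of Theorem~\ref{thm_main}. One repair is needed there: you must actually convert $L_s(E,Q)\ge\delta L_t(E,Q)$ to and from quantiles of $d_E$, via analogues of Lemma~\ref{lem_dist1}, at the cost of shifting $s$ or $t$. You cannot simply assume the definition already has quantile form.

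\textbf{Claim (1) of Step~3 is false.} Its root is already in Step~2: the side of a maximal $E$-free subcube is $L_1(E,Q)$, not a $t$-quantile with $t<1$. Bounding $\sup_Q d_E$ by a quantile of $d_E$ on $Q$ is weak porosity, not median porosity. Concretely, take the median porous set $E_\gamma$ of Example~\ref{ex2} and $Q=(-N^\gamma,N^\gamma)$.
\begin{itemize}
\item On one hand, $\sup_Q d_E\ge d_E(\tfrac12)=\tfrac12$.
\item On the other hand, $\{d_E\ge\lambda\}\cap Q$ lies in gaps of length at least $2\lambda$, whose total length is $\lesssim_\gamma\lambda^{-\gamma/(1-\gamma)}$. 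Hence $M_s(d_E,Q)\lesssim_{s,\gamma}N^{\gamma-1}$.
\item Therefore $\sup_Q f-m_s(Q)\ge(1-\gamma)\log N-C_{s,\gamma}\to\infty$.
\end{itemize}
There is also a structural obstruction. Claim (1), combined with exponential decay of the lower tail (which does hold for median porous sets), gives $(\dashint_Q d_E^{\,-\alpha})\esssup_Q d_E^{\,\alpha}\lesssim 1$ for small $\alpha$, i.e.\ $d_E^{\,-\alpha}\in A_1$. By Theorem~\ref{thm_ALMV1} every median porous set would then be weakly porous, contradicting Example~\ref{ex2}. So the upper tail of $f-m_s(Q)$ is unbounded in general and needs a decay estimate, just like the lower tail.

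\textbf{Claim (2) does not close either.} The first generation is trivial: $\bigcup_j Q_j\subset\{d_E\le e^{-K}M_s(d_E,Q)\}$ has measure at most $s|Q|$. The difficulty is the iteration, which needs the quantile of $d_E$ on each stopping cube to stay comparable to the threshold.
\begin{itemize}
\item With your sup-based rule, this compares $\sup_{\widehat Q_j}d_E$ with a quantile on $Q_j$, which is again a weak-porosity statement. The large gap can sit in a sibling of $Q_j$ while $Q_j$ is filled with tiny gaps, and then there is no decay at the next step.
\item With a quantile-based rule, the general inequality $M_s(f,Q_j)\ge M_{2^{-n}s}(f,\widehat Q_j)$ helps only if you control $M_s(f,\widehat Q_j)-M_{2^{-n}s}(f,\widehat Q_j)$. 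That is a pair of parameters (c) does not directly give you.
\end{itemize}
A single-pair condition excludes such configurations only through non-dyadic cubes straddling $Q_j$ and its neighbours, which a dyadic stopping time never examines.

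That is the missing idea, and it does not come from special structure of $d_E$. Your remark that a single-pair bound does not give $\BMO$ for general $f$ is true only for dyadic cubes. Over all cubes, bounded median differences for one pair $s<t$ imply $f\in\BMO$ for arbitrary measurable $f$. This is the median-difference characterization of $\BMO$ from \cite{PU}, proved there via a sparse bound in terms of median differences. Its one-sided counterpart in this paper is Theorem~\ref{thm_BMO}, which rests on two ingredients:
\begin{itemize}
\item Lemma~\ref{lem_BMO}, which chains the hypothesis through adjacent non-dyadic intervals;
\item the John--Nirenberg inequality of Theorem~\ref{thm_JN}, which exploits the opposite semicontinuity of maximal and minimal medians.
\end{itemize}
With that result in hand, (c)$\Rightarrow$(a) is short. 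As in the proof of Lemma~\ref{lem_main2}, convert porosity into $M_t(f,Q)-M_{s'}(f,Q)\le C$ for $f=\log d_E$ and some $s<s'<t$. Then deduce $f\in\BMO$ and conclude with your Step~1.
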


Median porosity is strictly weaker than weak porosity. The philosophical reason for this is that the side length of a maximal $E$-free subcube of $Q$ is replaced with a smaller quantity, which is still related to the side lengths of $E$-free subcubes of $Q$. (See Definition \ref{def_porous}(b) and the paragraph following Definition \ref{def_porous} for the precise definition in the case $n=1$.) Moreover, median porosity is considerably harder to manipulate than weak porosity, which is unsurprising since $A_p$ is less restrictive than $A_1$. Since it was introduced, the concept of median porosity has been the subject of further research. For example, it was recently shown that median porosity is quasiconformally invariant, whereas weak porosity is not \cite{KV}.

It should be noted that a characterization similar to Theorem \ref{thm_PU1} was obtained independently by G\'omez Vargas \cite{G}.

Theorems \ref{thm_ALMV1} and \ref{thm_PU1} give necessary and sufficient conditions for $w_{E,\alpha}$ to be an $A_p$ weight for some $\alpha>0$ and $1 \le p < \infty$. In the one-dimensional case (where cubes $Q$ reduce to intervals $I$), there are also one-sided analogues of the $A_p$ classes, called $A_p^+$ and $A_p^-$ (see Definition \ref{def_Ap}), which characterize $L^p$ boundedness of one-sided singular integrals and maximal operators. A natural next step in the theory of distance weights is to look for necessary and sufficient conditions for $w_{E,\alpha}$ to be an $A_p^+$ weight for some $\alpha>0$ and $1 \le p < \infty$. For $p=1$, this problem was solved by Aimar, G\'omez, G\'omez Vargas and Mart\'in-Reyes \cite{AGGM}. They formulated a one-sided analogue of weak porosity, which they called \emph{right weak porosity}, and established the following nice result.

\begin{theorem}
	\label{thm_AGGM}
	Let $\emptyset \ne E \subset \R$. Then the following are equivalent:
	\begin{enumerate}[label=\upshape(\alph*)]
		\item There exists $\alpha>0$ such that $d_E^{\,-\alpha} \in A_1^+$.
		\item $E$ is $s$-right weakly porous for all $0<s<1$.
		\item $E$ is $s$-right weakly porous for some $0<s<1$.
	\end{enumerate}
\end{theorem}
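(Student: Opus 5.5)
The plan is to prove the cycle (b)$\Rightarrow$(c)$\Rightarrow$(a)$\Rightarrow$(b). The implication (b)$\Rightarrow$(c) is trivial. Throughout I use the standard reformulation of $A_1^+$, namely $M^-w\le Cw$ a.e. For an interval $I=[a,b]$ and its right neighbour $I^+=[b,b+|I|]$, this amounts to
\[
\tdashint_I w \le C \essinf_{I^+} w .
\]
For $w=d_E^{\,-\alpha}$ we have $\essinf_{I^+} w = (\sup_{I^+} d_E)^{-\alpha}$. Up to a factor $2$, $\sup_{I^+} d_E$ is comparable to the length $m(I)$ of the largest $E$-free subinterval relevant to the right side, which is the quantity against which right weak porosity measures the holes in $I$. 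The whole proof therefore reduces to comparing $\tdashint_I d_E^{-\alpha}$ with $m^{-\alpha}$, where $m$ is this maximal-hole scale. The first step is to record this comparability carefully, including the dichotomy between a hole sitting inside $I^+$ and a hole that straddles $b$.

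\emph{(c)$\Rightarrow$(a).} Fix $I$ and let $m$ be the scale above. By $s$-right weak porosity, the pairwise disjoint $E$-free subintervals of $I$ of length $\ge s m$ cover at least $c|I|$. On the middle halves of these holes $d_E\gtrsim sm$, so
\[
|\{x\in I: d_E(x)< \lambda m\}|\le (1-c')|I|
\]
for a suitable $\lambda=\lambda(s)$. I then iterate. The complement of those holes in $I$ splits into subintervals $I_j$, each of whose own maximal-hole scale is at most $m$. Applying porosity inside each $I_j$ (a Calder\'on--Zygmund type stopping argument on the uncovered parts) should give the decay
\[
|\{x\in I: d_E<\lambda^k m\}|\le (1-c')^k|I| .
\]
With this decay, the layer-cake formula gives
\[
\tdashint_I d_E^{-\alpha}\lesssim m^{-\alpha}\sum_k \lambda^{-(k+1)\alpha}(1-c')^k ,
\]
which converges once $\lambda^{-\alpha}(1-c')<1$, i.e.\ for $\alpha$ small. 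This yields (a).

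\emph{(a)$\Rightarrow$(b).} Chebyshev's inequality together with the $A_1^+$ bound gives
\[
|\{x\in I: d_E<\sigma m\}|\le (\sigma m)^{\alpha}\int_I d_E^{-\alpha}\le C\sigma^{\alpha}|I| ,
\]
which is $\le|I|/2$ for $\sigma$ small. Points with $d_E\ge\sigma m$ lie in the middle parts of holes of length $\ge 2\sigma m$, so these holes cover $|I|/2$. This proves $s$-right weak porosity for all sufficiently small $s$.

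To reach every $0<s<1$, I need a purely geometric lemma: $s_0$-right weak porosity implies $s$-right weak porosity for every $s$. I expect this lemma to be the main obstacle. My first approach is to run porosity iteratively at decreasing scales. Holes of length in $[s_0 m, sm)$ do not count towards $s$-porosity at scale $m$. However, the gaps between the already-found large holes are subintervals whose own maximal-hole scale is smaller, and applying $s_0$-porosity there, then summing over generations, should show that a fixed proportion of $I$ is covered by holes of length $\ge sm$. The delicate point is the one-sided nature of the condition: the reference scale for a subinterval $I_j$ is governed by its right neighbour, which may itself lie inside $I$. I would handle this by always choosing right neighbours within $I\cup I^+$ and controlling their maximal holes by $m$. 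If the geometric lemma resists, an alternative is to use the self-improvement of $A_1^+$ weights, namely $w\in A_1^+\Rightarrow w^{1+\varepsilon}\in A_1^+$, to enlarge $\alpha$. Tracking how the Chebyshev constant depends on $\alpha$ may then allow $\sigma$ to be pushed toward any prescribed $s$.
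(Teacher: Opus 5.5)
\textbf{Note on the comparison.} The paper does not prove this theorem; it quotes it from \cite{AGGM}. So I am judging your argument on its own terms.

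\textbf{Direction (a)$\Rightarrow$(b) is sound, and gives more than you claim.} Take $m=L_1(E,I^+)$. Then $\sup_{I^+}d_E\ge\frac12 m$, so $\dashint_{I^-}d_E^{\,-\alpha}\le 2^\alpha[w]_{A_1^+}m^{-\alpha}$. Chebyshev then gives $|\{x\in I^-:d_E(x)<\sigma m\}|\le 2^\alpha[w]_{A_1^+}\sigma^\alpha|I^-|$. Choose $\sigma=\sigma(s)$ so that the right side is at most $s|I^-|$. This yields $(s,\sigma)$-right weak porosity for \emph{every} $0<s<1$ at once, which is exactly the mechanism of Lemma \ref{lem_main1}. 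Your ``main obstacle'' came only from fixing the threshold at $|I|/2$, so the geometric self-improvement lemma is unnecessary. Note also that in the paper's convention a smaller $s$ is the stronger property.

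\textbf{The genuine gap is in (c)$\Rightarrow$(a).} The decay $|\{d_E<\lambda^k m\}|\le(1-c')^k|I|$ is the entire content of this implication, and the mechanism you sketch does not produce it.
\begin{itemize}
\item Write $I^-=(a,b)$ and $I^+=(b,c)$. Porosity applied to a gap $I_j$, viewed as the left half of an interval whose right half is the neighbour $J_j$, only gives holes of length $\ge\delta L_1(E,J_j)$.
\item Your observation that the holes of $I_j$ have length at most $m$ is an \emph{upper} bound. Passing from level $k$ to level $k+1$ needs a \emph{lower} bound $L_1(E,J_j)\gtrsim\lambda^k m$.
\item If $J_j$ lies in a dense part of $E$, every hole you produce in $I_j$ can be far shorter than $\lambda^{k+1}m$, and then $I_j$ is not controlled at all. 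Your proposed remedy, bounding the holes of the neighbours by $m$, points in the wrong direction.
\end{itemize}

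\textbf{Two ingredients are missing.}
\begin{itemize}
\item \emph{Each gap must abut, on its right, a hole of length at least the current threshold $\rho$.} Then $L_1(E,J_j)\ge\min(|I_j|,\rho)$. This is automatic when the right endpoint of $I_j$ is the left endpoint of a removed hole. It fails for the gap ending at $b$, since the part of $I^+$ next to $b$ may be dense. One repairs this by running the argument on $(a,h)\supset I^-$, where $(h,h+m)$ is a maximal $E$-free subinterval of $I^+$. This lower bound on an $E$-free interval in the right half of an auxiliary interval is precisely the observation the introduction attributes to \cite{AGGM}.
\item \emph{Short gaps must be controlled by counting.} Gaps shorter than the threshold have effective scale only $|I_j|$, so they do not decay at the next level. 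Inside a parent gap $G$ of effective scale $\sigma_G=\min(\rho,|G|)$, these gaps are separated by holes of length $\ge\delta\sigma_G$. Hence there are at most $2|G|/(\delta\sigma_G)$ of them, with total length $\le s|G|$. By H\"older, $\sum|I_j|^{1-\alpha}\le(2/\delta)^\alpha s^{1-\alpha}|G|\sigma_G^{-\alpha}$, and this closes an induction $\int_G d_E^{\,-\alpha}\lesssim|G|\sigma_G^{-\alpha}$ once $\alpha$ is small.
\end{itemize}
Without these two ingredients, or an equivalent stopping-time argument, the sentence ``applying porosity inside each $I_j$ should give the decay'' is the unproved core of the theorem.

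\textbf{A smaller point.} Your claim $\sup_{I^+}d_E\approx m$ fails when $I$ lies inside a long hole far from $E$. That case needs a separate, easy estimate, as in Case 3 of the proof of Theorem \ref{thm_exp1}.
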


The definition of right weak porosity is similar to that of weak porosity, but it uses information about pore sizes in the right half $I^+$ of an interval $I$ to deduce information about pore sizes in the left half $I^-$ of the same interval $I$ (see Definition \ref{def_porous}(c) and the paragraph following Definition \ref{def_porous}). It is worth mentioning that Theorem \ref{thm_AGGM} has been generalized to higher dimensions \cite{LMV} in the context of parabolic $A_1$ weights, which were introduced (together with parabolic $A_p$ weights, where $1<p<\infty$) in \cite{KS} as a versatile tool for the study of doubly nonlinear parabolic PDE.

Theorem \ref{thm_AGGM} showed that the theory of distance weights extends nicely to the one-sided case when $p=1$. However, it remained unknown if this is also the case when $1 < p < \infty$.

In this paper, we develop the theory of distance weights in the one-sided case for $1 < p < \infty$. We define a one-sided analogue of median porosity, which we call \emph{right median porosity} (see Definition \ref{def_porous}(d) and the paragraph following Definition \ref{def_porous}), and we show that this property is necessary and sufficient for $w_{E,\alpha}$ to be an $A_p^+$ weight for some $\alpha>0$ and $1 < p < \infty$. To be precise, we prove the following.

\begin{theorem}
	\label{thm_main}
	Let $\emptyset \ne E \subset \R$. Then the following are equivalent:
	\begin{enumerate}[label=\upshape(\alph*)]
		\item There exist $1<p<\infty$ and $\alpha>0$ such that $d_E^{\,-\alpha} \in A_p^+$.
		\item For every $1<p<\infty$, there exists $\alpha>0$ such that $d_E^{\,-\alpha} \in A_p^+$.
		\item $E$ is $(s,t)$-right median porous for some $0<s<t<1$.
		\item $E$ is $(s,t)$-right median porous for all $0<s<t<1$.
	\end{enumerate}
\end{theorem}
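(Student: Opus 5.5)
The plan is to run the argument through a median-type characterization of one-sided $\BMO$ and of $A_p^+$, mirroring the two-sided strategy of \cite{PU}. The trivial implications are (b)$\Rightarrow$(a) and (d)$\Rightarrow$(c). The substantive work is the cycle (a)$\Rightarrow$(d)$\Rightarrow$(b) together with (c)$\Rightarrow$(d); the last of these is a self-improvement of the porosity parameters.

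\textbf{Step 1: median characterizations.} First I would prove that $w\in A_p^+$ for some $1<p<\infty$ if and only if $\log w$ belongs to a suitable one-sided $\BMO^+$ space. The relevant condition is that for every interval $I=I^-\cup I^+$ (left and right halves) the medians satisfy
\[
m_{I^-}(\log w) \le m_{I^+}(\log w) + C.
\]
In addition, the measure of the set in $I^-$ where $\log w$ exceeds a level should decay exponentially in the height of that level above the median on $I^+$. Concretely, I would aim to show: $\varphi\in\BMO^+$ (i.e.\ $e^{\delta\varphi}\in A_2^+$ for small $\delta$, which is the one-sided Garnett--Jones type statement) if and only if there is $0<t<1$ such that for every interval the $t$-level median of $\varphi$ on $I^-$ is controlled by the $s$-level median on $I^+$ plus a constant. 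The two standard facts I would use are:
\begin{itemize}
\item the one-sided John--Nirenberg inequality (Mart\'in-Reyes, de la Torre);
\item the factorization $A_p^+=A_1^+\cdot(A_1^-)^{1-p}$.
\end{itemize}
Because medians are insensitive to large values, they are the right tool when $\log d_E$ is unbounded below.

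\textbf{Step 2: translate to geometry.} Apply Step 1 to $\varphi=-\alpha\log d_E$. For $x\in I$, $d_E(x)$ is essentially the size of the $E$-free interval containing $x$ (truncated at $|I|$). Hence the median of $\log d_E$ over $I^\pm$ is the logarithm of a "median pore size": the $t$-quantile of the pore length at a random point of $I^+$. The median-porosity condition of Definition \ref{def_porous}(d) should then read as follows. Pores in $I^-$ of length at least a fixed fraction of the $s$-median pore size of $I^+$ cover at least a proportion $t$ of $I^-$. Using that dictionary, (a) gives this inequality with some specific $(s,t)$. The same median-rearrangement argument, with the roles of $s,t$ shifted, gives it for all $0<s<t<1$, provided I also prove (c)$\Rightarrow$(d).

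\textbf{Step 3: self-improvement (c)$\Rightarrow$(d), and (d)$\Rightarrow$(b).} For (c)$\Rightarrow$(d) I would iterate the porosity condition on a dyadic-type one-sided chain of intervals. Each application either exhibits large pores of measure proportion $t$, or passes to a smaller interval where the remaining set is again covered. A geometric series then boosts $t$ toward $1$ and $s$ toward $0$, mimicking the proof in \cite{PU}. For (d)$\Rightarrow$(b), given $p$, I would pick $\alpha$ small and verify the one-sided median condition of Step 1 with constants depending on $p$. Exponential decay of the distribution of $-\log d_E$ on $I^-$ relative to the median of $I^+$ yields, via layer-cake, that $\int_{I^-}d_E^{-\alpha}$ and $\int_{I^+}d_E^{\alpha/(p-1)}$ have bounded product. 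The decay comes from iterating (d) on the pores' complement, one scale at a time.

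\textbf{Main obstacle.} I expect the main obstacle to be Step 1 in the one-sided setting. Median conditions must be compared across the disjoint halves $I^-,I^+$, and there is no direct John--Nirenberg at the median level. My first attempt would be a one-sided Calder\'on--Zygmund stopping-time argument that propagates the median inequality from $I^+$ into $I^-$ through successive right-adjacent subintervals. This may require replacing halves with $I^-$ and a shifted interval, with a gap in between, and then invoking the standard equivalence of one-sided conditions under such changes of geometry.
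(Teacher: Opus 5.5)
Your architecture for the hard direction (a median-difference characterization of $\BMO^+$, followed by a pore-size dictionary for $-\log d_E$) matches the paper's. The gap is that the step carrying the whole proof is missing.

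The implication ``$M_t(f,I^-)-M_s(f,I^+)\le C$ for a single pair $s<t$ implies $f\in\BMO^+$'' cannot come from the John--Nirenberg inequality of \cite{MT}. That inequality assumes $f\in\BMO^+$, i.e.\ control by averages, which is exactly what must be proved. What is needed is a John--Nirenberg inequality at the level of medians (Theorem~\ref{thm_JN}), and your ``one-sided stopping time'' does not say how to obtain it. The paper's construction rests on three ideas absent from your sketch:
\begin{itemize}
\item The median bound propagates from halves to every $J\in\D_n(I^l)\cup\D_n(I^c)$, compared with $I^r$, at cost $2(n+1)C$ (Lemma~\ref{lem_BMO}).
\item The stopping set is $A=\{x:\sup_{y<x}m_\sigma(g,(y,x))>\gamma\}$ with $g=(f-m_\sigma(f,I^r))^+\chi_{I^l\cup I^c}$. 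Its openness, and the component bounds $M_\sigma(g,(a_i,x))\ge\gamma\ge m_\sigma(g,(x,b_i))$, rest on the opposite semicontinuity of minimal and maximal medians (Lemmas~\ref{lem_median3}, \ref{lem_median4} and \ref{lem_median5}).
\item A component cannot contain $m$ intervals of $\D_n(I^l)\cup\D_n(I^c)$. This yields a fixed proportion $\beta<1$, which is then iterated inside each component.
\end{itemize}

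There is a second problem with your layer-cake step. Exponential decay on $I^-$ bounds only $\dashint_{I^-}d_E^{-\alpha}$. For a single interval, $\dashint_{I^+}d_E^{\alpha/(p-1)}$ can be far larger than $L_t(E,I^+)^{\alpha/(p-1)}$: one pore occupying a fraction less than $1-t$ of $I^+$, with tiny pores elsewhere, already does this. Ruling this out requires porosity at all scales. The paper avoids the issue by proving only $\dashint_{I^l}e^{\beta f}\le L\,M_\sigma(e^{\beta f},I^r)$. It then invokes its median characterization of $A_p^+$ (Theorem~\ref{thm_Ap}, via the criterion of \cite{MPT}) together with \cite[Theorem 2]{MT}.

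Your logical scheme also leans on a self-improvement (c)$\Rightarrow$(d) that you do not justify. Lowering $s$ means showing that the part of $I^-$ not covered by large pores shrinks geometrically across scales. That is the same one-sided multi-scale decay problem, and ``a geometric series'' supplies no mechanism for it. The paper never needs this step:
\begin{itemize}
\item Since the median characterization requires only one pair of parameters, (c)$\Rightarrow$(b) is direct (Lemma~\ref{lem_main2}). One chooses $s<1-\tau'<1-\tau<1-\sigma=t$ and verifies $M_\tau(f,I^-)-M_\sigma(f,I^+)\le C$ for $f=-\log d_E$ in four cases using Lemma~\ref{lem_dist1}.
\item (a)$\Rightarrow$(d) holds for every $0<s<t<1$ by an elementary argument (Lemma~\ref{lem_main1}). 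The components of $I^-\setminus E$ shorter than $\delta L_t(E,I^+)$ have total length at most $\delta^\alpha L_t(E,I^+)^\alpha\int_{I^-}d_E^{-\alpha}$. H\"older's inequality over the components of $I^+\setminus E$ of length at least $L_t(E,I^+)$ gives $L_t(E,I^+)^\alpha\lesssim(\dashint_{I^+}d_E^{\alpha/(p-1)})^{p-1}$. The $A_p^+$ bound then finishes the argument for small $\delta$.
\end{itemize}

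Finally, your verbal dictionary misstates Definition~\ref{def_porous}(d). The condition is that pores in $I^-$ of length at least $\delta L_t(E,I^+)$ cover at least $(1-s)|I^-|$.
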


In the course of the proof of Theorem \ref{thm_main}, we obtain a number of noteworthy results which are of independent interest. In these results, the concept of the $s$-median $(0<s<1)$ of a function $f$ over an interval $I$ plays a central role. The first result is a characterization of $A_p^+$ in terms of medians, inspired by a similar characterization of $A_p$ due to Str\"omberg and Torchinsky \cite {ST}. Our characterization is as follows. (See the beginning of Section \ref{sec_prelim} for the definition of $I^{\gamma,\pm}$, which is a generalization of $I^\pm$.)

\begin{theorem}
	\label{thm_Ap}
	Let $w$ be a weight on $\R$. Then the following are equivalent:
	\begin{enumerate}[label=\upshape(\alph*)]
		\item $w \in A_p^+$ for some $1<p<\infty$.
		\item For every $0 < \gamma \le \frac{1}{2}$ and $0<s<1$, there exists $C>0$ such that, for all $I\subset\R$,
		\begin{equation}
			\label{eq_Ap2}
			\dashint_{I^{\gamma,-}} w \le C M_s(w, I^{\gamma,+}).
		\end{equation}
		\item There exist $0 < \gamma \le \frac{1}{2}$, $0<s<1$ and $C>0$ such that \eqref{eq_Ap2} holds for all $I\subset\R$.
	\end{enumerate}
\end{theorem}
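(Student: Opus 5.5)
The plan is to prove (a)$\Rightarrow$(b)$\Rightarrow$(c)$\Rightarrow$(a). The step (b)$\Rightarrow$(c) is trivial. I use the convention that $M_s(w,J)$ is an $s$-median of $w$ on $J$: the sets $\{w \le M_s(w,J)\}\cap J$ and $\{w\ge M_s(w,J)\}\cap J$ have measure at least $c_s|J|$, with $c_s>0$ depending only on $s$. The exact normalization will only change constants.

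For (a)$\Rightarrow$(b), fix $w\in A_p^+$ and write $\sigma=w^{-1/(p-1)}$. I first invoke the standard fact that the $A_p^+$ condition is equivalent to its version with a gap. That is, for $a<b<c<d$ with $d-c$, $c-b$ and $b-a$ comparable,
\[
\dashint_{(a,b)} w \Big(\dashint_{(c,d)} \sigma\Big)^{p-1}\le C.
\]
This covers the pair $I^{\gamma,-}$, $I^{\gamma,+}$ for any $0<\gamma\le\frac12$, with a constant depending on $\gamma$. If this fact is not available, it follows by splitting the pair into boundedly many consecutive adjacent pairs and using the one-sided doubling-type property of $A_p^+$ weights. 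Next, let $m=M_s(w,I^{\gamma,+})$ and $F=\{w\le m\}\cap I^{\gamma,+}$, so that $|F|\ge c_s|I^{\gamma,+}|$. On $F$ we have $\sigma\ge m^{-1/(p-1)}$, hence $\dashint_{I^{\gamma,+}}\sigma\ge c_s m^{-1/(p-1)}$. Substituting into the gapped $A_p^+$ inequality gives $\dashint_{I^{\gamma,-}}w\le C c_s^{-(p-1)} m$, which is \eqref{eq_Ap2}.

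For (c)$\Rightarrow$(a), I would first extract a one-sided "large sets carry mass" condition from \eqref{eq_Ap2}. Let $G=\{w\ge m\}\cap I^{\gamma,+}$, which satisfies $|G|\ge c_s|I^{\gamma,+}|$. Suppose $E\subset I^{\gamma,+}$ has $|E|\ge(1-c_s/2)|I^{\gamma,+}|$. Then $|E\cap G|\ge \frac{c_s}{2}|I^{\gamma,+}|$, and therefore
\[
w(E)\ge \tfrac{c_s}{2}|I^{\gamma,+}|\,m\ge c\, w(I^{\gamma,-}).
\]
This is the one-sided analogue of Muckenhoupt's $A_\infty$ condition: sets occupying a fixed large proportion of the right part carry a fixed proportion of the mass of the left part. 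The remaining task is to show that this condition, for one fixed $\gamma$, forces $w\in A_\infty^+=\bigcup_p A_p^+$. I would then use the known characterization of $A_\infty^+$ by Martín-Reyes, de la Torre and coauthors, possibly after adjusting the geometry of the intervals. Changing $\gamma$ should be done by a covering argument: tile the relevant portions by a bounded number of intervals of the fixed shape, and chain the estimate along them from left to right.

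The main obstacle is this self-improvement from the "large proportion" condition to a genuine $A_p^+$ bound. Only one direction of information flow is available, from right to left, and it holds only for sets of proportion close to $1$. If the literature characterization cannot be used directly, my first attempt would be a one-sided Calderón–Zygmund iteration applied to $\sigma$, or equivalently to the level sets $\{w<\lambda\}$. Iterating the condition on the maximal intervals where $w$ is small relative to $\dashint_{I^{\gamma,-}}w$ should give geometric decay:
\[
\big|\{x\in I^{\gamma,+}: w(x)<2^{-k}\tdashint_{I^{\gamma,-}}w\}\big|\le C\eta^k|I|, \qquad \eta<1.
\]
This decay makes $\dashint \sigma$ finite for $p$ large enough and yields the $A_p^+$ inequality. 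The delicate point is to keep the decomposition compatible with the one-sided geometry, so that each stopping interval always sits to the right of an interval where the left average is controlled. I would handle this with the usual "rising sun"–type one-sided maximal decomposition.
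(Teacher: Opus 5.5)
Your (a)$\Rightarrow$(b) is correct and essentially the paper's argument: the Markov-type lower bound for $\dashint_{I^{\gamma,+}} w^{-1/(p-1)}$ in terms of $M_s(w,I^{\gamma,+})$ is exactly what the paper uses. The detour through a ``gapped'' $A_p^+$ condition and one-sided doubling is unnecessary, though. Since $I^{\gamma,\pm}\subset I^{\pm}$ and $|I^{\gamma,\pm}|=2\gamma|I^{\pm}|$, the $A_p^+$ product over the pair $(I^{\gamma,-},I^{\gamma,+})$ is at most $(2\gamma)^{-p}[w]_{A_p^+}$ by containment alone.

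The gap is in (c)$\Rightarrow$(a). Your reduction of (c) to the condition ``large subsets of $I^{\gamma,+}$ carry a fixed fraction of $w(I^{\gamma,-})$'' is correct. However, the implication that actually matters, namely that this forces $w\in\bigcup_p A_p^+$, is never established. You defer it to three things:
\begin{itemize}
\item an unspecified characterization of $A_\infty^+$ ``possibly after adjusting the geometry'';
\item a chaining argument that changes $\gamma$;
\item a rising-sun iteration whose key output, the geometric decay of $|\{w<2^{-k}\dashint_{I^{\gamma,-}}w\}\cap I^{\gamma,+}|$, is described but not proved.
\end{itemize}
Even that decay, at $\gamma<\frac12$, would only give a gapped inequality, which still needs an argument to become an $A_p^+$ bound.

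The missing idea is to choose the right criterion and the right decomposition. By \cite[Theorem 1]{MPT} (whose proof needs only one value of $\alpha$), it suffices to find $0<\alpha<1$ and $\beta>0$ with the following property: whenever $\lambda=\dashint_{(a,b)}w\le\dashint_{(a,x)}w$ for all $x\in(a,b)$, we have $|\{x\in(a,b):w(x)>\beta\lambda\}|>\alpha(b-a)$.

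To check this, for $k\ge0$ let $I_k$ be the interval with left endpoint $a$ and length $(1-\gamma)^k(b-a)$.
\begin{itemize}
\item Then $(a,b)=\bigsqcup_k I_k^{\gamma,+}$ up to a null set.
\item Each $I_k^{\gamma,-}$ is an initial segment $(a,x_k)$ of $(a,b)$, so (c) gives $\lambda\le\dashint_{I_k^{\gamma,-}}w\le C\,M_s(w,I_k^{\gamma,+})$.
\item Choosing $\beta<1/C$ gives $\beta\lambda<M_s(w,I_k^{\gamma,+})$, hence $|\{w\le\beta\lambda\}\cap I_k^{\gamma,+}|\le s|I_k^{\gamma,+}|$.
\item Summing over $k$ gives the criterion with any $\alpha<1-s$.
\end{itemize}
This works for the given $\gamma$, so no change of $\gamma$ and no Calder\'on--Zygmund iteration is needed. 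All of the one-sided self-improvement is contained in the cited theorem. Its hypothesis on \emph{all} initial segments $(a,x)$ is precisely what the nested intervals $I_k$ exploit. Your large-sets condition would also feed into this argument, taking $\beta$ smaller than your constant $c$.
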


The second major byproduct of our work is a new characterization of the one-sided $\BMO$ space $\BMO^+$ (see Definition \ref{def_BMO}) in terms of median differences. We draw inspiration from a similar characterization of standard $\BMO$ due to Pasquariello and the second named author \cite{PU}, but we use entirely different techniques in the proof. Our characterization is as follows.

\begin{theorem}
	\label{thm_BMO}
	Let $f : \R \to \R$ be measurable. Then the following are equivalent:
	\begin{enumerate}[label=\upshape(\alph*)]
		\item $f \in \BMO^+$.
		\item For every $0<s<t<1$, there exists $C>0$ such that, for all $I\subset\R$, we have
		\begin{equation}
			\label{eq_BMO2}
			M_t(f,I^-) - M_s(f,I^+) \le C.
		\end{equation}
		\item There exist $0<s<t<1$ and $C>0$ such that \eqref{eq_BMO2} holds for all $I\subset\R$.
	\end{enumerate}
\end{theorem}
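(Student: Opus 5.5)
The implication (a)$\Rightarrow$(b) will follow from Chebyshev's inequality, (b)$\Rightarrow$(c) is trivial, and the substance is (c)$\Rightarrow$(a), which I would prove through a one-sided John--Nirenberg type estimate. Throughout I use the definition of $\BMO^+$ as the condition that for every $I$ there is a constant $a_I$ with
\[
\dashint_{I^-}(f-a_I)^+ + \dashint_{I^+}(a_I-f)^+ \le K,
\]
together with the elementary facts about $s$-medians: $M_s(f,J)$ is nondecreasing in $s$; $|\{x\in J: f>m\}|<(1-s)|J|$ forces $M_s(f,J)\le m$; and $|\{x\in J: f<m\}|<s|J|$ forces $M_s(f,J)\ge m$. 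This equivalent form of $\BMO^+$ is not stated earlier in the paper, so it is an assumption.

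\emph{(a)$\Rightarrow$(b).} Fix $0<s<t<1$ and $I$, and take $a=a_I$. By Chebyshev,
\[
|\{x\in I^-: f>a+\lambda\}|\le K|I^-|/\lambda .
\]
Choosing $\lambda>K/(1-t)$ makes this smaller than $(1-t)|I^-|$, so $M_t(f,I^-)\le a+\lambda$. Symmetrically, $|\{x\in I^+: f<a-\lambda\}|<s|I^+|$ once $\lambda>K/s$, so $M_s(f,I^+)\ge a-\lambda$. Subtracting gives \eqref{eq_BMO2} with $C=2\lambda$, which depends only on $K,s,t$. Note that this direction does not even use $s<t$.

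\emph{(c)$\Rightarrow$(a), first step: upgrade the hypothesis.} I would first extend \eqref{eq_BMO2} for the single pair $(s,t)$ to a bound for $M_{t'}(f,J)-M_{s'}(f,J')$ whenever $J$ lies to the left of $J'$ at comparable scale, with $s'$ close to $0$ and $t'$ close to $1$. The tool is chaining: move from $J$ to $J'$ through a bounded number of intervals $I_k$ with $I_k^+$ overlapping $I_{k+1}^-$ in a large proportion. The gap $t-s$ is what lets one compare medians of overlapping intervals. If $I_k^+\cap I_{k+1}^-$ occupies more than a $(1-(t-s))$-fraction of each, then the $s$-median on one side is controlled by the $t$-median on the other, so the one-step errors add up to a constant.

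\emph{(c)$\Rightarrow$(a), second step: level-set decay.} Fix $I$ and set $a=M_{s}(f,I^+)$. I aim to show
\[
|\{x\in I^-: f>a+kC'\}|\le \theta^k|I^-|
\]
for some $\theta<1$ via a one-sided Calderón--Zygmund stopping-time argument in the spirit of Martín-Reyes and de la Torre. Inside $I^-$, take maximal subintervals $J$ on which the $t$-median exceeds the current level, selected so that the interval immediately to the right of $J$, of comparable length, still has median below the previous level. The upgraded condition bounds the median jump across each selected interval by $C'$. The median property then forces the exceptional set at the next level to occupy at most a fixed fraction $\theta$ of the union of the selected intervals, and iterating over $k$ gives the decay. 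The mirror argument in $I^+$ gives decay of $|\{x\in I^+: f<a-kC'\}|$. Integrating the tails then gives $\BMO^+$ with $a_I=a$.

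The main obstacle is the stopping-time step. In the one-sided setting the comparison interval must lie to the right of $J$ and inside the region already controlled, and one must check that the selected intervals $J$ can be arranged so this holds without losing the fraction bound. I would first try the rising-sun lemma applied to the function $x\mapsto$ (median over $[x,x+h]$) to produce such intervals, since that lemma is the usual substitute for Calderón--Zygmund decompositions in $A_p^+$ theory.
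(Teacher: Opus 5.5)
Your argument for (c)$\Rightarrow$(a) has two genuine gaps: the closing step is false as stated, and the core upper-tail estimate is not established.

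\textbf{The closing step.} First, your (a)$\Rightarrow$(b) is fine once the two-sided form of $\BMO^+$ is justified. The direction you use follows quickly from Proposition~\ref{prop_ApBMO}(h): write $f=\alpha\log w$ with $w\in A_2^+$, put $a_I=\alpha\log\dashint_{I^-}w$, and use $\log^+u\le u$ on each half. (The paper instead combines (h) with Theorem~\ref{thm_Ap} and Lemma~\ref{lem_median2}.) The trouble is in (c)$\Rightarrow$(a): the lower tail on $I^+$ below $a=M_s(f,I^+)$ need not decay. Take $I=(-1,1)$ and $f=N\chi_{(s,\infty)}$. Since $f$ is nondecreasing, Lemma~\ref{lem_median1}(d) gives $M_t(f,J^-)\le\esssup_{J^-}f\le\essinf_{J^+}f\le M_s(f,J^+)$ for every $J$, so (c) holds with any $C>0$. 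Yet $M_s(f,I^+)=N$ and $|\{x\in I^+:f(x)<N-\lambda\}|=s|I^+|$ for all $0<\lambda<N$, so $\dashint_{I^+}(M_s(f,I^+)-f)^+=sN$ is unbounded. There is also no ``mirror'' argument. Replacing $f$ by $-f(-x)$ turns the hypothesis you would need into, up to small parameter shifts, a bound on $M_{1-s}(f,I^-)-M_{1-t}(f,I^+)$. That is (c) for the pair $(1-t,1-s)$, which is not given. The paper never needs a lower tail. It proves only the upper-tail bound of Theorem~\ref{thm_JN} on $I^l$ relative to $M_\sigma(f,I^r)$ and exponentiates to $\dashint_{I^l}e^{\beta f}\le L\,M_\sigma(e^{\beta f},I^r)$. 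It then gets $e^{\beta f}\in A_p^+$ from Theorem~\ref{thm_Ap}, whose proof rests on the Mart\'in-Reyes--Pick--de la Torre criterion, and concludes with Proposition~\ref{prop_ApBMO}(h). The lower side is absorbed by the weight theory.

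\textbf{The upper-tail estimate.} Step~1 cannot work as stated. A bounded chain of overlapping comparisons only transports an $s$- or $t$-median from one interval to another. It never pushes the parameter toward $0$ or $1$, because that is a statement about the distribution of $f$ across all smaller scales. That is the John--Nirenberg content itself, and part of (c)$\Rightarrow$(b), which the paper obtains only through (a). The upgrade is also unnecessary, since any fixed ratio $\theta<1$ suffices. Step~2, which you leave open, is the heart of the proof, and the selection rule you sketch breaks down for two reasons.
\begin{itemize}
\item The map $x\mapsto M_t(f,(a,x))$ is only upper semicontinuous (Lemma~\ref{lem_median3}, Remark~\ref{rmk_median}). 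So sets of the form ``the $t$-median exceeds the level'' need not be open.
\item For stopping intervals near the midpoint, the right-hand neighbour is a tiny interval in $I^+$. The available chaining estimates (cf.\ Lemma~\ref{lem_BMO}) lose a multiple of $C$ per dyadic generation, so its median cannot be compared to $M_s(f,I^+)$ at bounded cost.
\end{itemize}
The paper's remedies are specific.
\begin{itemize}
\item It compares $I^l$ with $I^r$, keeping $I^c$ as a buffer.
\item It sets $g=(f-m_\sigma(f,I^r))^+\chi_{I^l\cup I^c}$ and stops on $A=\{x:\sup_{y<x}m_\sigma(g,(y,x))>\gamma\}$. This set is open because \emph{minimal} medians are lower semicontinuous (Lemma~\ref{lem_median4}).
\item Lemma~\ref{lem_median5} is the rising-sun step, giving $M_\sigma(g,(a,x))\ge\gamma$ on each component $(a,b)$.
\item Lemma~\ref{lem_BMO}, combined with the gap between $\sigma$ and $t$, shows that no component contains $m$ intervals of $\D_n(I^l)\cup\D_n(I^c)$. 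This keeps the components meeting $I^l$ inside $I^l\cup I^c$ and yields a covering ratio $\beta<1$.
\item The iteration closes because the reference interval for the next generation lies \emph{inside} each stopping interval $I_i=(a_i,b_i)$, not to its right. The $I_{ij}$ share the right endpoint $b_i$, and $m_\sigma(g,(x,b_i))\le\gamma$ for all $x\in I_i$.
\end{itemize}
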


Let us give a quick overview of the proof of Theorem \ref{thm_main}. There are a number of major obstacles involved in the proof. First, the technique used in \cite{AGGM} to prove Theorem \ref{thm_AGGM} (which deals with the case $p=1$) relies heavily on the observation that the right half $I^+$ of a certain interval $I$ contains an $E$-free interval whose size is bounded below by a certain number. In the $1<p<\infty$ case, the existence of one such interval is not enough; instead, a certain proportion of $I^+$ must be covered by such intervals, and this is not necessarily the case. Thus, the argument in \cite{AGGM} breaks down completely when $1<p<\infty$, and a new approach is needed. Second, in \cite{PU}, Theorem \ref{thm_PU1} (which deals with standard $A_p$) is proved by first establishing a sparse bound for arbitrary measurable functions in terms of median differences and then using this sparse bound to control the oscillations of $\log w_{E,\alpha}$. Unfortunately, the stopping time argument that is used to prove the sparse bound cannot easily be adapted to the one-sided case, so this approach does not work either.

We circumvent these difficulties by first proving a John-Nirenberg inequality for measurable functions that satisfy a bound of the form \eqref{eq_BMO2} (see Theorem \ref{thm_JN}). We then use this John-Nirenberg inequality and Theorem \ref{thm_Ap} to prove Theorem \ref{thm_BMO}, which is a key ingredient in the proof of Theorem \ref{thm_main}. Our proof of Theorem \ref{thm_JN} is inspired by the work of Mart\'in-Reyes and de la Torre \cite{MT}, who prove a similar John-Nirenberg inequality for functions in $\BMO^+$. Since we are assuming that the function satisfies inequality \eqref{eq_BMO2}, which involves medians, and not that the function has the $\BMO^+$ property, which involves averages, the proof is harder, and a number of new ideas are required. One of our key observations is that minimal and maximal medians have opposite continuity properties (see Lemmas \ref{lem_median3} and \ref{lem_median4}). We take advantage of these properties by using the minimal median instead of the more familiar maximal median at several judiciously chosen steps. Another key observation is that \eqref{eq_BMO2} implies a similar bound in which $I^-$ is replaced with a dyadic subinterval of $I^-$ (see Lemma \ref{lem_BMO}). This allows us to control the lengths of subintervals of $I^-$ consisting of points for which a median exceeds a certain value, which is a crucial step in the proof.

As a consequence of Theorems \ref{thm_PU1} and \ref{thm_main} and the fact that $A_p = A_p^+ \cap A_p^-$, we deduce that a set $E$ is median porous if and only if $E$ is both left median porous and right median porous (see Proposition \ref{prop_conseq1}). Thus, the theory of one-sided median porous sets relates to the theory of standard median porous sets in a nice and simple way.

Based on well-known inclusions between the $A_p$ classes, one would expect right median porosity to be the weakest of the four notions of porosity considered in Theorems \ref{thm_ALMV1}, \ref{thm_PU1}, \ref{thm_AGGM} and \ref{thm_main}. Indeed, this turns out to be the case. In Examples \ref{ex1}, \ref{ex2} and \ref{ex3}, we exhibit sets $E$ which are right median porous but do not satisfy some or any of the other three notions of porosity. Thus, right median porosity is strictly weaker than those other conditions.

Once the equivalences in Theorems \ref{thm_ALMV1}, \ref{thm_PU1}, \ref{thm_AGGM} and \ref{thm_main} have been established, it is natural to ask the following question: For what values of $\alpha$ does $w_{E,\alpha}$ belong to some $A_p$ class? For $p = 1$, this question was answered by Anderson, Lehrb\"ack, Mudarra and V\"ah\"akangas \cite{ALMV}. They defined a quantity called the $1$-Muckenhoupt exponent of $E$ (see Definition \ref{def_exp1}(a)), which is denoted by $\Mu_1(E)$ and is similar to the Assouad dimension, and proved the following.

\begin{theorem}
	\label{thm_ALMV2}
	Let $\emptyset \ne E \subset \R^n$ and let $\alpha > 0$. Then $d_E^{\,-\alpha} \in A_1$ if and only if $\alpha < \Mu_1(E)$.
\end{theorem}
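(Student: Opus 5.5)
The plan is to read the statement as saying two things. First, the $A_1$ condition for $d_E^{\,-\alpha}$ is, up to constants, exactly the inequality that defines the $1$-Muckenhoupt exponent at the exponent $\alpha$. Second, the set of admissible exponents is an interval that is open at its right endpoint. I take Definition \ref{def_exp1}(a) to be of the form: $\Mu_1(E)$ is the supremum of those $q\ge 0$ for which there is $C>0$ with
\[
\dashint_Q d_E^{\,-q} \le C\,\bigl(\sup_Q d_E\bigr)^{-q}
\]
for all cubes $Q$ (or an equivalent version with $\sup_Q d_E$ replaced by the side length of a maximal $E$-free subcube of $Q$ when $Q$ is close to $E$). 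Since $\essinf_Q d_E^{\,-\alpha} = (\sup_Q d_E)^{-\alpha}$ by continuity of $d_E$, the $A_1$ inequality $\dashint_Q w \le C \essinf_Q w$ for $w=d_E^{\,-\alpha}$ reads literally as the displayed inequality with $q=\alpha$.

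\emph{Step 1 (localization).} First I would split cubes into two types. If $\operatorname{dist}(Q,E)\ge \ell(Q)$, then $d_E$ is comparable to a constant on $Q$, by the $1$-Lipschitz property. So both sides of the inequality are comparable and the bound holds for every $q$ with a dimensional constant. If $\operatorname{dist}(Q,E)<\ell(Q)$, then $\sup_Q d_E$ is comparable to the side length of a maximal $E$-free subcube of $Q$. This shows that whichever normalization Definition \ref{def_exp1}(a) uses agrees, up to constants, with the $A_1$ condition. Hence $d_E^{\,-q}\in A_1$ if and only if $q$ belongs to the set $S$ of exponents for which the defining inequality holds.

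\emph{Step 2 (structure of $S$).} Next I would show that $S$ is an interval containing $0$. If $q\in S$ and $0<q'<q$, Jensen's (Hölder's) inequality gives
\[
\dashint_Q d_E^{\,-q'} \le \Bigl(\dashint_Q d_E^{\,-q}\Bigr)^{q'/q} \le C^{q'/q}\bigl(\sup_Q d_E\bigr)^{-q'}.
\]
Hence $q'\in S$. Together with Step 1, this gives $[0,\Mu_1(E))\subset S$, which is the ``if'' direction.

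\emph{Step 3 (openness; the main point).} For the ``only if'' direction I must exclude $\alpha=\Mu_1(E)$, i.e.\ show that $S$ is open at its top. Suppose $w=d_E^{\,-\alpha}\in A_1$. By the reverse Hölder inequality for $A_1$ (indeed $A_\infty$) weights, there is $\varepsilon>0$ with $\bigl(\dashint_Q w^{1+\varepsilon}\bigr)^{1/(1+\varepsilon)}\le C\dashint_Q w$. Combining this with the $A_1$ bound yields
\[
\dashint_Q d_E^{\,-\alpha(1+\varepsilon)} \le C\,\bigl(\essinf_Q d_E^{\,-\alpha}\bigr)^{1+\varepsilon} = C\,\bigl(\sup_Q d_E\bigr)^{-\alpha(1+\varepsilon)}.
\]
So $\alpha(1+\varepsilon)\in S$, which forces $\alpha<\Mu_1(E)$. (If $\alpha = 0$ is excluded by hypothesis, this is consistent with $\Mu_1(E)>0$ whenever some power works.) The main obstacle is really Step 1: checking carefully that the normalization in the definition of $\Mu_1$ matches $\essinf_Q d_E^{\,-\alpha}$ up to constants uniformly in $Q$. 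In particular, this includes cubes meeting $E$ with positive measure, where both sides are infinite or the weight fails to be locally integrable; that case must be handled by noting $|E|=0$ is forced on both sides. The self-improvement in Step 3 is standard once that identification is in place.
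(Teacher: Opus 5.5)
\textbf{There is a genuine gap: the proposal rests on a misreading of Definition \ref{def_exp1}(a).} $\Mu_1(E)$ is not defined by an integral inequality for $d_E^{\,-q}$. It is the supremum of those $q\ge 0$ for which $|I\cap E_r|/|I|\le C\,(r/L_1(E,I))^{q}$ holds for every cube $I$ centred at a point of $E$ and every $0<r\le L_1(E,I)$, with $\Mu_1(E)=0$ if some $L_1(E,I)$ vanishes. Under your assumed definition, Step 1 (``$d_E^{\,-q}\in A_1$ iff $q\in S$'') is a tautology. Step 2 is then unnecessary, since the true decay condition is trivially downward closed because $r/L_1\le 1$. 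The actual content of the theorem is never addressed: the $A_1$ condition for $d_E^{\,-\alpha}$ is equivalent to power decay of $|Q\cap E_r|$ in cubes centred on $E$. The comparison of $\sup_Q d_E$ with maximal $E$-free subcubes, which is what your Step 1 checks, is only a minor ingredient.

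\textbf{What is missing in the ``if'' direction.} Given $\alpha<\Mu_1(E)$, fix an admissible exponent $\beta>\alpha$.
First, letting $r\to 0$ in the decay estimate (using $\bar E\subset E_r$) gives $|\bar E|=0$.
Next, for $Q$ centred at a point of $E$, choose $j$ with $2^j\le L_1(E,Q)<2^{j+1}$. A dyadic layer-cake decomposition gives
\[
\int_Q d_E^{\,-\alpha}\le \sum_{i\le j}2^{-(i-1)\alpha}\,|Q\cap E_{2^i}|+2^{-j\alpha}|Q| \lesssim |Q|\,L_1(E,Q)^{-\beta}\sum_{i\le j}2^{i(\beta-\alpha)}+2^{-j\alpha}|Q| \lesssim |Q|\,L_1(E,Q)^{-\alpha}.
\]
The geometric series converges precisely because $\beta>\alpha$; this is where strictness enters in this direction, not through Jensen.
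Combined with $\sup_Q d_E\lesssim L_1(E,Q)$ for such $Q$, this gives the $A_1$ bound on cubes centred on $E$, and also local integrability.
Finally, extend to arbitrary cubes. If $\operatorname{dist}(Q,E)\gtrsim \ell(Q)$, then $d_E$ is essentially constant on $Q$. Otherwise $Q\subset Q'$ for a cube $Q'$ of comparable size centred on $E$, and $\dashint_Q w\lesssim \dashint_{Q'}w\lesssim \essinf_{Q'}w\le \essinf_Q w$.

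\textbf{What is missing in the ``only if'' direction.} You need Chebyshev:
\[
r^{-\alpha}|Q\cap E_r|\le \int_Q d_E^{\,-\alpha}\le [w]_{A_1}\,|Q|\,\bigl(\sup_Q d_E\bigr)^{-\alpha}.
\]
Together with $\sup_Q d_E\gtrsim L_1(E,Q)$ (evaluate $d_E$ at the centre of a large $E$-free subcube), this gives the decay condition with exponent $\alpha$. Also, $d_E^{\,-\alpha}\in L^1_{\mathrm{loc}}$ forces $|\bar E|=0$, so the convention $\Mu_1(E)=0$ does not apply.
Your self-improvement step ($w^{1+\varepsilon}\in A_1$ via reverse H\"older) is the right way to get the strict inequality, but it must be fed into this Chebyshev argument rather than into your reformulated set $S$.

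The paper does not reprove this theorem; it is quoted from \cite{ALMV}. Its proof of the one-sided analogue, Theorem \ref{thm_exp1} via Lemma \ref{lem_exp2}, follows exactly this pattern.
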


For $1 < p < \infty$, the precise range of exponents $\alpha$ was found by Pasquariello and the second named author \cite{PU}. They defined the $\infty$-Muckenhoupt exponent of $E$ (see Definition \ref{def_exp1}(b)), which is denoted by $\Mu_\infty(E)$, and proved the following.

\begin{theorem}
	\label{thm_PU2}
	Let $\emptyset \ne E \subset \R^n$ and let $\alpha > 0$. Then $d_E^{\,-\alpha} \in A_p$ for some $1<p<\infty$ if and only if $\alpha < \Mu_\infty(E)$.
\end{theorem}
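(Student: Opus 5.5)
The plan is to reduce the $A_p$ membership of $w_{E,\alpha}=d_E^{\,-\alpha}$ to a median inequality, and then to show that the set of admissible exponents $\alpha$ is an open initial segment of $(0,\infty)$. I will use three facts. First, $\bigcup_{1<p<\infty}A_p=A_\infty$. Second, the Str\"omberg--Torchinsky median characterization \cite{ST}: $w\in A_\infty$ if and only if, for some (equivalently every) $0<s<1$, there is $C$ with $\dashint_Q w\le C\,M_s(w,Q)$ for all cubes $Q$. Third, the median of a decreasing power transforms simply. Since $x\mapsto x^{-\alpha}$ is decreasing, the $s$-median of $d_E^{\,-\alpha}$ over $Q$ equals $m^{-\alpha}$, where $m$ is a complementary $(1-s)$-median of $d_E$ over $Q$. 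Hence $d_E^{\,-\alpha}\in\bigcup_p A_p$ is equivalent to the purely geometric statement
\[
\dashint_Q d_E^{\,-\alpha}\le C\,\bigl(M_{1-s}(d_E,Q)\bigr)^{-\alpha}\quad\text{for all cubes }Q. \tag{$\ast$}
\]
This inequality is exactly the quantity that the definition of $\Mu_\infty(E)$ (Definition~\ref{def_exp1}(b)) is built from. So I will read $\Mu_\infty(E)$ as the supremum of those $\alpha$ for which $(\ast)$ holds with some $C$. If the definition is phrased through pore sizes instead, I first translate: $d_E\ge r$ on an $E$-free cube of side comparable to $r$, so the $(1-s)$-median of $d_E$ is comparable to the side length of the $E$-free subcubes that cover a fixed proportion of $Q$.

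Next comes monotonicity: if $(\ast)$ holds for $\alpha$, it holds for every $0<\beta<\alpha$. This follows from Jensen/H\"older,
\[
\dashint_Q d_E^{\,-\beta}\le\Bigl(\dashint_Q d_E^{\,-\alpha}\Bigr)^{\beta/\alpha}\le C^{\beta/\alpha}\bigl(M_{1-s}(d_E,Q)\bigr)^{-\beta}.
\]
Combined with the first paragraph, the set $S=\{\alpha>0: d_E^{\,-\alpha}\in A_p \text{ for some } p\}$ is an interval with left endpoint $0$ and right endpoint $\Mu_\infty(E)$.

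It remains to show that $S$ is open, i.e.\ that $\Mu_\infty(E)$ itself is not attained. This is the step I expect to be the crux. If $w=d_E^{\,-\alpha}\in A_\infty$, then $w$ satisfies a reverse H\"older inequality $\bigl(\dashint_Q w^{1+\varepsilon}\bigr)^{1/(1+\varepsilon)}\le C\dashint_Q w$. Therefore
\[
\dashint_Q d_E^{\,-\alpha(1+\varepsilon)}\le C\Bigl(\dashint_Q w\Bigr)^{1+\varepsilon}\le C'\,M_s(w,Q)^{1+\varepsilon}=C'\,M_s\bigl(w^{1+\varepsilon},Q\bigr),
\]
using $(\ast)$ for $w$ and the fact that medians commute with increasing powers. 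So $w^{1+\varepsilon}=d_E^{\,-\alpha(1+\varepsilon)}$ satisfies $(\ast)$, and $\alpha(1+\varepsilon)\in S$. Thus $S=(0,\Mu_\infty(E))$, which is the statement.

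The delicate points are bookkeeping ones. One is handling left versus right medians consistently when passing through $x\mapsto x^{-\alpha}$. The other is the case where $E$ has positive measure, so that $d_E=0$ on a large set: there the median is $0$, the right side of $(\ast)$ is $+\infty$, and $w\notin L^1_{\mathrm{loc}}$. This case must be matched with the convention $\Mu_\infty(E)=0$, or the definition's handling of it, so that both sides of the equivalence fail together.
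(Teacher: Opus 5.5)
\textbf{There is a genuine gap: the identification of the endpoint with $\Mu_\infty(E)$ is assumed, not proved.}

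Your first three steps are sound. The Str\"omberg--Torchinsky reduction to $(\ast)$ works, with the median bookkeeping $M_s(d_E^{\,-\alpha},Q)=m_{1-s}(d_E,Q)^{-\alpha}$. The H\"older monotonicity is correct. The reverse-H\"older openness step is exactly how the paper obtains strict inequality in the one-sided analogue, via Proposition~\ref{prop_ApBMO}(g).

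However, these steps hold verbatim for $\{\alpha>0 : u^{\alpha}\in A_\infty\}$ with $u$ any positive function, so they say nothing about $E$. The whole content of the theorem is that the right endpoint of this interval equals $\Mu_\infty(E)$. You obtain that by deciding to ``read'' $\Mu_\infty(E)$ as the supremum of exponents for which $(\ast)$ holds, and that is not Definition~\ref{def_exp1}(b).

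In the actual definition, $\alpha$ is admissible if, for some $s$ and $C$, $|Q\cap E_r|/|Q|\le C\,(r/L_s(E,Q))^{\alpha}$ for every cube $Q$ \emph{centred at a point of $E$} and every $0<r\le L_s(E,Q)$. Since $Q\cap E_r=\{x\in Q: d_E(x)<r\}$, this is a weak-type bound, $|\{x\in Q: d_E^{\,-\alpha}(x)>\lambda\}|\le C|Q|\,L_s(E,Q)^{-\alpha}\lambda^{-1}$ for $\lambda\ge L_s(E,Q)^{-\alpha}$. It is imposed only on centred cubes and normalized by pore size. By contrast, $(\ast)$ is a strong-type bound on all cubes, normalized by a median of $d_E$. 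Your pore-size remark addresses only the last of these three differences. The weak/strong difference is not cosmetic: for $E=\{0\}\subset\R$ the decay holds with exponent $1$, yet $d_E^{\,-1}\notin L^1_{\mathrm{loc}}$.

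\textbf{What is needed to close the gap.} You must prove both links between the decay condition and $(\ast)$. The two-sided statement is only quoted from \cite{PU} here, but the paper does exactly this for the one-sided analogue, Theorem~\ref{thm_exp2}.

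For the direction $\alpha\in S\Rightarrow\alpha\le\Mu_\infty(E)$:
\begin{enumerate}
\item Note first that $(\ast)$ forces $|\bar E|=0$.
\item Chebyshev gives $\frac{|Q\cap E_r|}{|Q|}\,r^{-\alpha}\le\dashint_Q d_E^{\,-\alpha}$.
\item Combine this with $(\ast)$ and the comparison $L_{s'}(E,Q)\lesssim M_{t'}(d_E,Q)$ from Lemma~\ref{lem_dist1}(a), for suitable $s'<t'$ placed below $1-s$. This yields the decay at exponent $\alpha$.
\item Your openness step then upgrades $\le$ to $<$.
\end{enumerate}

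For the converse, $\alpha<\Mu_\infty(E)$ only gives the decay at some exponent $\beta>\alpha$. From this you must:
\begin{enumerate}
\item derive $|\bar E|=0$ and $d_E^{\,-\alpha}\in L^1_{\mathrm{loc}}$;
\item sum over the shells $E_{2^i}\setminus E_{2^{i-1}}$ to get $\dashint_Q d_E^{\,-\alpha}\lesssim L_s(E,Q)^{-\alpha}$ on centred cubes (as in Lemma~\ref{lem_exp2}); the geometric series converges only because $\beta>\alpha$, which is where strictness genuinely enters on this side;
\item use $M_s(d_E,Q)\le L_s(E,Q)$ when $Q\cap E\ne\emptyset$ to convert this into median form;
\item extend from cubes centred on $E$ to arbitrary cubes.
\end{enumerate}
The last step is not automatic. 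In the paper it is the case analysis of the proof of Theorem~\ref{thm_exp2}, where a reverse-H\"older inequality $\dashint d_E^{\,-\beta}\lesssim(\dashint d_E^{\,-\alpha})^{\beta/\alpha}$ is checked instead of a median inequality.

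None of these steps appears in your proposal, so as written it proves a tautology rather than the theorem.
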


It is desirable to extend the quantitative characterizations in Theorems \ref{thm_ALMV2} and \ref{thm_PU2} to the one-sided case. Our last major result in this paper achieves precisely that goal. We define one-sided analogues $\Mu_1^+(E)$ and $\Mu_\infty^+(E)$ of the Muckenhoupt exponents of $E$ (see Definition \ref{def_exp2}) and prove the following two theorems.

\begin{theorem}
	\label{thm_exp1}
	Let $\emptyset \ne E \subset \R$ and $\alpha>0$. Then $d_E^{\,-\alpha} \in A_1^+$ if and only if $\alpha < \Mu_1^+(E)$.
\end{theorem}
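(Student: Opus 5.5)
The plan is to reduce the statement to two structural facts: the set of admissible exponents is downward closed, and it is open. We expect $\Mu_1^+(E)$ (Definition \ref{def_exp2}) to be the supremum of those $\beta\ge 0$ for which a one-sided testing inequality holds uniformly over intervals, roughly
\[
\dashint_{I^-} d_E^{\,-\beta} \le C \Bigl(\sup_{I^+} d_E\Bigr)^{-\beta}\qquad\text{for all } I\subset\R,
\]
or an equivalent formulation in terms of pore sizes in $I^+$ and $I^-$. We argue against this form.

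\textbf{Step 1 (identify $A_1^+$ with the testing condition).} We first check that, for $w=d_E^{\,-\alpha}$, membership in $A_1^+$ is equivalent to the testing inequality with $\beta=\alpha$. Here we use the interval form of $A_1^+$ from Definition \ref{def_Ap}: $\dashint_{I^-} w \le C\,\essinf_{I^+} w$. We also use the identity $\essinf_{I^+} d_E^{\,-\alpha} = (\sup_{I^+} d_E)^{-\alpha}$, which holds because $d_E$ is continuous.

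If Definition \ref{def_exp2} is phrased through maximal $E$-free subintervals of $I^+$ rather than $\sup_{I^+} d_E$, this step additionally needs a comparison lemma. That lemma says $\sup_{I^+} d_E$ is comparable to the length of the largest $E$-free subinterval of a slightly enlarged interval. If needed, we absorb the enlargement by using the freedom in $\gamma$ (as in Theorem \ref{thm_Ap}) to pass between $I^{\pm}$ and $I^{\gamma,\pm}$. Local integrability of $d_E^{\,-\alpha}$ (equivalently $|\overline E|=0$) is also part of $A_1^+$, and we check it is encoded in the definition of $\Mu_1^+$.

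\textbf{Step 2 (downward closed).} Suppose the testing condition holds for some $\beta>\alpha$. Then by Jensen/H\"older,
\[
\dashint_{I^-} d_E^{\,-\alpha}\le\Bigl(\dashint_{I^-} d_E^{\,-\beta}\Bigr)^{\alpha/\beta}\le C^{\alpha/\beta}\Bigl(\sup_{I^+}d_E\Bigr)^{-\alpha}.
\]
Hence the set of admissible exponents is an interval starting at $0$. Consequently, if $\alpha<\Mu_1^+(E)$, we can pick an admissible $\beta\in(\alpha,\Mu_1^+(E))$, conclude that $\alpha$ is admissible, and so $d_E^{\,-\alpha}\in A_1^+$ by Step 1.

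\textbf{Step 3 (openness, the main obstacle).} Conversely, suppose $d_E^{\,-\alpha}\in A_1^+$. We must show $\alpha<\Mu_1^+(E)$ strictly, so it is not enough that $\alpha$ is admissible; we need the admissible set to be open. We would invoke the one-sided reverse H\"older property of $A_1^+$ weights (Mart\'in-Reyes and collaborators): if $w\in A_1^+$ then $w^{1+\delta}\in A_1^+$ for some $\delta>0$. Applying this to $w=d_E^{\,-\alpha}$ gives $d_E^{\,-\alpha(1+\delta)}\in A_1^+$. By Step 1, $\alpha(1+\delta)$ is admissible, and therefore $\Mu_1^+(E)\ge\alpha(1+\delta)>\alpha$.

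If a clean citation of this self-improvement is unavailable in the form needed, we would prove it directly for $A_1^+$. The route is a one-sided Calder\'on--Zygmund/rising-sun decomposition of $\{w>\lambda\}$ inside $I^-$, controlled by $\essinf$ over the right neighbours. This yields $\dashint_{I^-} w^{1+\delta}\lesssim (\essinf_{I^+} w)^{1+\delta}$, which is exactly what is needed. Making this one-sided decomposition work with the asymmetric geometry is where we expect the real effort to lie.
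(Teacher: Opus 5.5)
\textbf{Gap: your Step 1 rests on a guessed definition of $\Mu_1^+(E)$.} Your Step 3 matches how the paper gets strictness in the ``only if'' direction (Proposition \ref{prop_ApBMO}(g)). Everything else, however, uses a definition of $\Mu_1^+(E)$ that is not Definition \ref{def_exp2}, and that guess makes the theorem nearly tautological.

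The actual definition is geometric. It requires $|I^-\cap E_r|/|I^-|\le C\,(r/L_1(E,I^+))^\beta$ for $0<r\le L_1(E,I^+)$, and only for intervals $I$ \emph{centred at a point of $E$}. Your Step 1 equivalence at a single exponent is then false. For $E=\{0\}$ and $I=(-R,R)$ one has $|I^-\cap E_r|=r$ and $L_1(E,I^+)=R=|I^-|$, so the defining inequality holds with $\beta=1$, $C=1$. Yet $d_E^{\,-1}=|x|^{-1}$ is not even locally integrable.

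What is true (Lemma \ref{lem_exp2}) is that admissibility at some $\beta>\alpha$ gives $\dashint_{I^-}d_E^{\,-\alpha}\lesssim L_1(E,I^+)^{-\alpha}$ for centred $I$. The proof sums over the shells $E_{2^i}\setminus E_{2^{i-1}}$ and produces the factor $\sum_{i\le j}2^{i(\beta-\alpha)}$, which is finite only because $\beta>\alpha$. That layer-cake step, not a Jensen step, is where the strict inequality is used. Under the real definition, downward closedness is trivial since $r/L_1(E,I^+)\le 1$.

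For the ``only if'' direction you need Chebyshev, $\dashint_{I^-}d_E^{\,-\alpha}\ge r^{-\alpha}|I^-\cap E_r|/|I^-|$, together with $\esssup_{I^+}d_E^{\,\alpha}\gtrsim L_1(E,I^+)^\alpha$ (Lemma \ref{lem_dist2}(d)). The reverse bound $\esssup_{I^+}d_E^{\,\alpha}\le L_1(E,I^+)^\alpha$ (Lemma \ref{lem_dist2}(e)) requires $\overline{I^+}\cap\bar{E}\neq\emptyset$. The $I^{\gamma,\pm}$ device you mention plays no role here, and Theorem \ref{thm_Ap} concerns $A_p^+$ with $p>1$, not $A_1^+$.

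\textbf{Missing: the ``if'' direction for intervals not centred at points of $E$, which is most of the paper's proof.} Write $I^-=(a,b)$ and $I^+=(b,c)$. The paper argues by cases.
\begin{itemize}
\item When $b\in E$, the bound follows from Lemma \ref{lem_exp2} and Lemma \ref{lem_dist2}(e).
\item When $I^-\cap E\neq\emptyset$, take $x=\max\overline{I^-\cap E}$. The inequality holds for the interval $J$ with $J^-=(a,x)$, by a limit from the centred case. It is then transferred to $I$, splitting according to whether $J^+\subset I^-$ and, if not, whether $b-x\ge\frac14|I^-|$. This uses that $d_E$ is $1$-Lipschitz, that $(\xi+\eta)^\alpha\le\xi^\alpha+\eta^\alpha$, and that $b-x\le L_1(E,J^+)$.
\item When $I^-\cap E=\emptyset$, compare $d(I^-,E)$ with $d(I^+,E)$.
\end{itemize}

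Likewise, $|\bar{E}|=0$ and $d_E^{\,-\alpha}\in L^1_{\mathrm{loc}}$ are not encoded in the definition. They must be derived from the decay condition. This is delicate when $\sup E<\infty$, since centred intervals only control regions to the left of points of $E$, and it uses $\alpha<1$ from Lemma \ref{lem_exp1}.

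As written, your Steps 2 and 3 only show that $\{\beta>0: d_E^{\,-\beta}\in A_1^+\}$ is an interval open at its right end. That is not the statement of the theorem.
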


\begin{theorem}
	\label{thm_exp2}
	Let $\emptyset \ne E \subset \R$ and $\alpha>0$. Then $d_E^{\,-\alpha} \in A_p^+$ for some $1<p<\infty$ if and only if $\alpha < \Mu_\infty^+(E)$.
\end{theorem}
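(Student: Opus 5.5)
The plan is to reduce the statement to the median characterization of $A_p^+$ in Theorem \ref{thm_Ap}, combined with two structural facts about the set
\[
\mathcal A^+(E)=\{\alpha>0 : d_E^{\,-\alpha}\in A_p^+ \text{ for some } 1<p<\infty\}.
\]
The first fact is that $\mathcal A^+(E)$ is an interval starting at $0$. The second is that $\mathcal A^+(E)$ is open at its right endpoint. Given both, it is enough to show that $\sup\mathcal A^+(E)=\Mu_\infty^+(E)$, where $\Mu_\infty^+(E)$ is as in Definition \ref{def_exp2}.

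\emph{Downward closedness.} If $w=d_E^{\,-\alpha}\in A_p^+$ and $0<\beta<\alpha$, then $d_E^{\,-\beta}=w^{\theta}$ with $\theta=\beta/\alpha\in(0,1)$. I would show $w^\theta\in A_p^+$ by applying Jensen/H\"older separately to each of the two averages in the one-sided $A_p^+$ condition: $\dashint w^\theta\le(\dashint w)^\theta$ on the left piece, and similarly for $w^{-\theta/(p-1)}$ on the right piece. Alternatively, I would check condition (c) of Theorem \ref{thm_Ap} directly. Jensen gives $\dashint_{I^{\gamma,-}}w^\theta\le(\dashint_{I^{\gamma,-}} w)^\theta$, and medians commute with increasing powers: $M_s(w^\theta,J)=M_s(w,J)^\theta$. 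Either way, $(0,\alpha]\subset\mathcal A^+(E)$ whenever $\alpha\in\mathcal A^+(E)$.

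\emph{Openness.} I would use the one-sided reverse H\"older / self-improvement property of $A_p^+$: if $w\in A_p^+$, then $w^{1+\varepsilon}\in A_{q}^+$ for some $\varepsilon>0$ and some $q$. This is classical and due to Mart\'in-Reyes and collaborators. It could also be rederived from the John--Nirenberg inequality of Theorem \ref{thm_JN} together with Theorem \ref{thm_BMO} applied to $\log w$, since $\log w^{1+\varepsilon}=(1+\varepsilon)\log w$ has median differences scaled by $1+\varepsilon$. More precisely, $w\in A_p^+$ for some $p$ should be characterized by $\log w$ satisfying a suitable one-sided exponential-integrability condition. Scaling by $1+\varepsilon$ with $\varepsilon$ small preserves that condition, because the John--Nirenberg constant only gives a threshold. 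So $\alpha\in\mathcal A^+(E)$ implies $(1+\varepsilon)\alpha\in\mathcal A^+(E)$.

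\emph{Identification of the endpoint.} I would unwind Definition \ref{def_exp2}. I expect $\Mu_\infty^+(E)$ to be defined as the supremum of exponents for which a geometric median inequality holds, uniformly over intervals $I$. Such an inequality compares an average of $d_E^{\,-\alpha}$ over $I^{\gamma,-}$ with an $s$-median of $d_E^{\,-\alpha}$ (equivalently, a median of pore sizes) over $I^{\gamma,+}$. Then the two directions go as follows.
\begin{itemize}
\item[(i)] If $\alpha\in\mathcal A^+(E)$, then Theorem \ref{thm_Ap}(b) gives \eqref{eq_Ap2} for $w=d_E^{\,-\alpha}$. Translating this into the defining inequality shows $\alpha\le\Mu_\infty^+(E)$. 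Openness then upgrades this to $\alpha<\Mu_\infty^+(E)$.
\item[(ii)] If $\alpha<\Mu_\infty^+(E)$, pick $\beta\in(\alpha,\Mu_\infty^+(E))$ for which the defining inequality holds. This is exactly \eqref{eq_Ap2} for $d_E^{\,-\beta}$, so Theorem \ref{thm_Ap}(c) gives $\beta\in\mathcal A^+(E)$, and downward closedness gives $\alpha\in\mathcal A^+(E)$.
\end{itemize}

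\emph{Main obstacle.} The hardest step is the translation in (i) and (ii) between the median condition \eqref{eq_Ap2} for $d_E^{\,-\alpha}$ and the geometric quantity in Definition \ref{def_exp2}. That definition is phrased through pore sizes in $I^{\pm}$, possibly with fixed $\gamma=\tfrac12$ and a specific $s$. So I would need the freedom in Theorem \ref{thm_Ap}, namely all $\gamma$ and $s$ versus some $\gamma$ and $s$, to pass between parameters. I would also need a lemma computing $\dashint_J d_E^{\,-\alpha}$ and $M_s(d_E^{\,-\alpha},J)$ up to constants in terms of the $E$-free subintervals of $J$. For the average, this is a sum over maximal pores $P\subset J$ of $|P|^{1-\alpha}$ for $\alpha<1$, with care needed near $\alpha\ge1$, where local integrability fails. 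For the median, it is the threshold length $\ell$ such that pores of length at most $\ell$ cover an $s$-proportion of $J$.
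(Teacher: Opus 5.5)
There is a genuine gap in step (ii), which is where essentially all of the work in the ``if'' direction lies. Definition \ref{def_exp2}(b) is not a median inequality for $d_E^{\,-\beta}$ over all intervals. It is the decay estimate $|I^-\cap E_r|/|I^-|\le C\,(r/L_s(E,I^+))^{\beta}$ for $0<r\le L_s(E,I^+)$, and it is imposed \emph{only for intervals $I$ centred at a point of $E$}. So the claim that it ``is exactly \eqref{eq_Ap2} for $d_E^{\,-\beta}$'' fails on two counts.

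First, a decay estimate with exponent $\beta$ controls averages of $d_E^{\,-\alpha}$ only for $\alpha<\beta$. Summing over the shells $E_{2^i}\setminus E_{2^{i-1}}$ gives a geometric series in $2^{i(\beta-\alpha)}$, as in Lemma \ref{lem_exp2}. At the endpoint the implication genuinely fails: $E=\{0\}$ satisfies the defining inequality with exponent $1$, but $d_E^{\,-1}\notin L^1_{\mathrm{loc}}$. This is repairable by inserting an intermediate exponent. Lemma \ref{lem_median1}(i) and Lemma \ref{lem_dist1}(a),(b) then do give \eqref{eq_Ap2} with $\gamma=\tfrac12$ for $E$-centred $I$.

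Second, and this is the missing idea, you then have \eqref{eq_Ap2} only for $E$-centred intervals. Theorem \ref{thm_Ap}(c) needs it for every $I\subset\R$, and because the condition is one-sided you cannot simply re-centre or enlarge $I$. The paper devotes most of the proof to this extension. Write $I^-=(a,b)$ and $I^+=(b,c)$.
\begin{itemize}
\item If $I^-\cap E\neq\emptyset$, the paper takes $x=\max\overline{I^-\cap E}$ and the interval $J$ with $J^-=(a,x)$. It obtains the inequality for $J$ as a limit over the $E$-centred intervals $J_y$ with $J_y^-=(a,y)$, $y\in E$. It then compares $I$ with $J$ separately when $J^+\subset I^-$ and when $J^+\not\subset I^-$, using Lemma \ref{lem_dist2} on the $E$-free piece $(x,b)$.
\item If $I^-\cap E=\emptyset$, it splits according to whether $d(I^+,E)\le 2|I^+|$ and compares $d(I^-,E)$ with $d(I^+,E)$.
\end{itemize}
Your plan has no substitute for any of this.

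There are also some smaller points.
\begin{itemize}
\item The limiting step above needs $d_E^{\,-\alpha}\in L^1_{\mathrm{loc}}$, and this is not automatic. The defining inequality only bounds $d_E^{\,-\alpha}$ on intervals whose right endpoint lies in $E$. When $\sup E<\infty$ and $E$ is not closed, the paper needs an exhaustion argument with a uniform lower bound on $L_s(E,I_{k,n}^+)$ (Lemma \ref{lem_exp2}).
\item Your worry about $\alpha\ge 1$ is moot, since $\Mu_\infty^+(E)\le 1$ by Lemma \ref{lem_exp1}.
\item Direction (i) is fine in substance. Apply Chebyshev on $I^-\cap E_r$, then bound $M_\sigma(d_E^{\,-\alpha},I^+)\lesssim L_s(E,I^+)^{-\alpha}$ for $E$-centred $I$ via Lemmas \ref{lem_median1}(i) and \ref{lem_dist1}. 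This is a median-based variant of the paper's use of Lemma \ref{lem_dist1}(c).
\item Using Theorem \ref{thm_Ap} instead of the paper's reverse H\"older criterion (Proposition \ref{prop_ApBMO}(e)) would be a legitimate alternative for the final step, once all intervals are handled.
\item For openness, rely on the classical self-improvement (Proposition \ref{prop_ApBMO}(g)). Your rederivation from Theorem \ref{thm_JN} does not work: Theorem \ref{thm_BMO} only yields $w^{\beta}\in A_p^+$ for $\beta$ below a threshold set by the John--Nirenberg constants, and nothing forces that threshold to exceed $1$.
\end{itemize}
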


The proofs of Theorems \ref{thm_exp1} and \ref{thm_exp2} present unique challenges, especially in the ``if'' direction. One of these is that, in the one-sided case, it is harder to prove that $w_{E,\alpha}$ is locally integrable. Indeed, we obtain integrability only on intervals with right endpoint in $E$, and we may not assume $E$ is a closed set. This is an issue in the case $\sup E<\infty$, and we overcome it by finding an increasing sequence of intervals on which the integrals of $w_{E,\alpha}$ are uniformly bounded and taking a limit. Another difficulty is that, in the one-sided case, after an integral inequality has been proved for intervals centred at points of $E$, it is not as easy to extend this inequality to arbitrary intervals, again due to the one-sided nature of the inequality. We deal with this issue by considering a number of cases, splitting certain integrals into two, and using a different set of estimates in each case to handle the terms appropriately.

By combining the results in \cite{ALMV}, \cite{PU}, \cite{AGGM} and the present paper, we obtain the following summary of the various notions of porosity for arbitrary nonempty sets $E \subset \R$.

\medskip
{\renewcommand{\arraystretch}{1.25}
\begin{center}
	\footnotesize
	\begin{tabular}{
			|>{\centering\arraybackslash}m{4em}
			|@{\hspace{2pt}}
			|>{\centering\arraybackslash}m{10em}
			|>{\centering\arraybackslash}m{8em}
			|>{\centering\arraybackslash}m{7em}
			|>{\centering\arraybackslash}m{10em}|}
		\hline
		& Definition & Distance function characterization & Dimension characterization & Example \\
		\hline\hline
		Weakly porous & $|\bar{E}|=0$ and $L_s(E,I) \gtrsim L_1(E,I)$ for some $0<s<1$ & $d_E^{\,-\alpha} \in A_1$ \quad \quad \quad for some $\alpha>0$ & $\Mu_1(E)>0$ & $\Z$ \\
		\hline
		Median porous & $|\bar{E}|=0$ and $L_s(E,I) \gtrsim L_t(E,I)$ \quad for some $0<s<t<1$ & $d_E^{\,-\alpha} \in A_p$ \quad \quad \quad for some $\alpha>0$ and $1<p<\infty$ & $\Mu_\infty(E)>0$ & $E_\gamma = \{\pm n^\gamma : n \in \Z_{\ge0}\}$ for $0<\gamma<1$ \\
		\hline
		Right weakly porous & $|\bar{E}|=0$ and $L_s(E,I^-) \gtrsim L_1(E,I^+)$ for some $0<s<1$ & $d_E^{\,-\alpha} \in A_1^+$ \quad \quad \quad for some $\alpha>0$ & $\Mu_1^+(E)>0$ & $\Z_{\ge0}$ \\
		\hline
		Right median porous & $|\bar{E}|=0$ and $L_s(E,I^-) \gtrsim L_t(E,I^+)$ for some $0<s<t<1$ & $d_E^{\,-\alpha} \in A_p^+$ \quad \quad \quad for some $\alpha>0$ and $1<p<\infty$ & $\Mu_\infty^+(E)>0$ & $(E_\gamma \cap (-\infty,0]) \cup (E_{\gamma/m} \cap [0,+\infty))$ for $0<\gamma<1$ and $m \ge 2$ \\
		\hline
	\end{tabular}
\end{center}}
\medskip

This paper is organized as follows: In Section \ref{sec_prelim}, we introduce notation, recall various definitions, and prove a few preliminary results. In Section \ref{sec_ApBMO}, we prove Theorems \ref{thm_Ap} and \ref{thm_BMO}, which characterize $A_p^+$ and $\BMO^+$, respectively, in terms of medians. In Section \ref{sec_main}, we prove Theorem \ref{thm_main}, which characterizes the sets $E$ such that the weight $w_{E,\alpha}$ belongs to $A_p^+$ for some $\alpha>0$ and $1<p<\infty$. In Section \ref{sec_conseq}, we look at consequences of Theorem \ref{thm_main} and present examples of sets which are right median porous but do not satisfy other notions of porosity. In Section \ref{sec_exp}, we prove Theorems \ref{thm_exp1} and \ref{thm_exp2}, which establish the precise range of exponents $\alpha>0$ such that $w_{E,\alpha}$ belongs to $A_1^+$ or to $A_p^+$ for some $1<p<\infty$. In Section \ref{sec_extra}, we include a few further results on right median porosity for the interested reader.

\section{Preliminaries}
\label{sec_prelim}

\subsection{Intervals and distance functions}

Our setting is the real line $\R$ equipped with Lebesgue measure. We write $I\subset\R$ to indicate that $I$ is an open interval $(a,b)$ of $\R$, and we write $J \subset I$ to indicate that $J$ is an open subinterval $(c,d)$ of $I$. (This convention applies only to the letters $I$ and $J$.) In what follows, assume $I = (a,b)$. We denote by $I^-$ and $I^+$ the left and right halves of $I$, respectively, i.e.
\[
I^- = (a,\tfrac{a+b}{2}), \qquad
I^+ = (\tfrac{a+b}{2},b).
\]
We denote by $I^l$, $I^c$ and $I^r$ the left, center, and right thirds of $I$, respectively, i.e.
\[
I^l = (a, \tfrac{2a+b}{3}), \qquad
I^c = (\tfrac{2a+b}{3}, \tfrac{a+2b}{3}), \qquad
I^r = (\tfrac{a+2b}{3}, b).
\]
More generally, for any $0 < \gamma \le \frac{1}{2}$, we define
\[
I^{\gamma,-} = (a, a + \gamma(b-a)), \qquad
I^{\gamma,+} = (b - \gamma(b-a), b).
\]
For example, when $\gamma = \frac{1}{2}$ we have $I^{\gamma,-} = I^-$ and $I^{\gamma,+} = I^+$, and when $\gamma = \frac{1}{3}$ we have $I^{\gamma,-} = I^l$ and $I^{\gamma,+} = I^r$.

For any $I\subset\R$ and any $c>0$, we denote by $cI$ the open interval with the same midpoint as $I$ and $c$ times the length. When $I = (a,b)$ and $J = (b,c)$ are two consecutive open intervals, we sometimes denote by $I \cup J$ the whole interval $(a,c)$, including the point $b$. This also generalizes to any finite or countable collection of consecutive open intervals.

We define the dyadic descendants of an open interval $I$ as follows: Let $\D_0(I) = \{I\}$ and, for $n \ge 1$, let $\D_n(I) = \{J^-,J^+ : J \in \D_{n-1}(I)\}$. Note that $\D_n(I)$ is the set of $n$th-generation dyadic descendants of $I$. The set of all dyadic descendants of $I$ is $\D(I) = \bigcup_{n=0}^\infty \D_n(I)$.

For any locally integrable function $f : \R \to \R$ and any $I\subset\R$, we denote the average value of $f$ over $I$ by
\[
f_I = \dashint_I f = \frac{1}{|I|} \int_I f(x)\,dx.
\]
For any two nonnegative functions $f$ and $g$ and any parameter $\alpha$, we use the notation $f \lesssim_\alpha g$ to indicate that $f \le Cg$ for some constant $C$ depending only on $\alpha$. We write $f \approx_\alpha g$ when both $f \lesssim_\alpha g$ and $g \lesssim_\alpha f$.

Let $E \subset \R$ be a nonempty set. For any point $x \in \R$, we denote the distance from $x$ to $E$ by
\[
d_E (x) = \inf\{|x-y|  : y \in E\}.
\]
If $F \subset \R$ is another nonempty set, we denote the distance between $E$ and $F$ by
\[
d(E,F) = \inf\{|x-y| : x \in E, y \in F\}.
\]
For any $r>0$, we define the $r$-neighbourhood of $E$ by
\[
E_r = \{x \in \R : d_E(x) < r\}.
\]
Recall that $\bar{E}$, the closure of $E$, satisfies
\[
\bar{E} = \{x \in \R : d_E(x)=0\}.
\]

\subsection{\texorpdfstring{$A_p$}{A\_p} weights and \texorpdfstring{$\BMO$}{BMO}}

By a \emph{weight on} $\R$, we mean a function $w : \R \to [0,\infty]$ such that $w$ is locally integrable (in particular, $w < \infty$ a.e.) and $w>0$ a.e. Of particular importance in harmonic analysis are the Muckenhoupt $A_p$ weights, which are defined as follows.

\begin{definition}
	\label{def_Ap}
	Let $w$ be a weight on $\R$ and let $1<p<\infty$. We say that
	\begin{enumerate}[label=\upshape(\alph*)]
		\item $w \in A_1$ if
		\[
		[w]_{A_1} \coloneq \sup_{I \subset \R} \left(\dashint_I w\right) \esssup_I (w^{-1}) < \infty;
		\]
		\item $w \in A_p$ if
		\[
		[w]_{A_p} \coloneq \sup_{I \subset \R} \left(\dashint_I w\right) \left(\dashint_I w^{-1/(p-1)}\right)^{p-1} < \infty;
		\]
		\item $w \in A_\infty$ if
		\[
		[w]_{A_\infty} \coloneq \sup_{I \subset \R} \left(\dashint_I w\right) \exp\left(\dashint_I \log(w^{-1})\right) < \infty;
		\]
		\item $w \in A_1^+$ if
		\[
		[w]_{A_1^+} \coloneq \sup_{I \subset \R} \left(\dashint_{I^-} w\right) \esssup_{I^+} (w^{-1}) < \infty;
		\]
		\item $w \in A_p^+$ if
		\[
		[w]_{A_p^+} \coloneq \sup_{I \subset \R} \left(\dashint_{I^-} w\right) \left(\dashint_{I^+} w^{-1/(p-1)}\right)^{p-1} < \infty;
		\]
		\item $w \in A_\infty^+$ if
		\[
		[w]_{A_\infty^+} \coloneq \sup_{I \subset \R} \left(\dashint_{I^-} w\right) \exp\left(\dashint_{I^+} \log(w^{-1})\right) < \infty.
		\]
	\end{enumerate}
\end{definition}

Another class of functions which play a central role in harmonic analysis are functions of bounded mean oscillation, which are defined as follows.

\begin{definition}
	\label{def_BMO}
	Let $f : \R \to \R$ be a locally integrable function. We say that
	\begin{enumerate}[label=\upshape(\alph*)]
		\item $f \in \BMO$ if
		\[
		\|f\|_{\BMO} \coloneq \sup_{I \subset \R} \dashint_I |f - f_I| < \infty;
		\]
		\item $f \in \BMO^+$ if
		\[
		\|f\|_{\BMO^+} \coloneq \sup_{I \subset \R} \dashint_{I^-} (f-f_{I^+})^+ < \infty.
		\]
	\end{enumerate}
\end{definition}

The classes $A_p^-$ (for $1 \le p \le \infty$) and $\BMO^-$ are defined similarly; simply interchange $I^-$ and $I^+$ in the definitions of $A_p^+$ (for $1 \le p \le \infty$) and $\BMO^+$.

It is well-known that a weight $w$ on $\R$ is an $A_p$ weight for some $1 \le p < \infty$ if and only if $w$ satisfies a reverse H\"older inequality, i.e.\ there exist $\epsilon,C>0$ such that, for all $I\subset\R$,
\begin{equation}
	\label{eq_RH1}
	\dashint_I w^{1+\epsilon} \le C \left(\dashint_I w\right)^{1+\epsilon}.
\end{equation}
The natural one-sided analogue of \eqref{eq_RH1} is the following inequality:
\begin{equation}
	\label{eq_RH2}
	\dashint_{I^-} w^{1+\epsilon} \le C \left(\dashint_{I^+} w\right)^{1+\epsilon}.
\end{equation}
Unfortunately, it is not true that $w \in A_p^+$ for some $1 \le p < \infty$ if and only if $w$ satisfies \eqref{eq_RH2} for some $\epsilon,C>0$ and all $I\subset\R$. However, the desired equivalence holds if we replace \eqref{eq_RH2} with the following weaker inequality, in which $I^+$ is replaced with $I$ on the right-hand side:
\begin{equation}
	\label{eq_RH3}
	\dashint_{I^-} w^{1+\epsilon} \le C \left(\dashint_I w\right)^{1+\epsilon}.
\end{equation}
See Proposition \ref{prop_ApBMO}(d) for a precise statement of this result.

We collect a number of useful facts about the $A_p^+$ and $\BMO^+$ classes in the following proposition. The reader will notice that many of these results are one-sided analogues of well-known properties of the usual $A_p$ and $\BMO$ classes.

\begin{proposition}
	\label{prop_ApBMO}
	The following hold:
	\begin{enumerate}[label=\upshape(\alph*)]
		\item $A_1^+ \subset A_p^+ \subset A_q^+ \subset A_\infty^+$ for all $1<p<q<\infty$.
		\item $A_q^+ = \bigcup_{1 \le p < q} A_p^+$ for all $1 < q \le \infty$.
		\item $A_p = A_p^+ \cap A_p^-$ for all $1 \le p \le \infty$.
		\item $w \in A_\infty^+$ if and only if there exist $\epsilon,C>0$ such that \eqref{eq_RH3} holds for all $I\subset\R$.
		\item If there exist $\epsilon,C>0$ such that \eqref{eq_RH2} holds for all $I\subset\R$, then $w \in A_\infty^+$.
		\item Let $1 \le p \le \infty$. If $w \in A_p^+$, then $w^\delta \in A_{1+\delta(p-1)}^+$ for all $0<\delta<1$.
		\item Let $1 \le p \le \infty$. If $w \in A_p^+$, then $w^{1+\delta} \in A_p^+$ for some $\delta>0$.
		\item For each $1<p<\infty$, we have $\BMO^+ = \{\alpha \log w : w \in A_p^+, \alpha\ge0\}$.
	\end{enumerate}
\end{proposition}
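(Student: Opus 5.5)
Most of the items in this proposition are one-sided analogues of classical facts, and several are already in the literature: Sawyer, Martín-Reyes, de la Torre and collaborators, and Martín-Reyes--de la Torre \cite{MT} for the one-sided $\BMO^+$ and $A_\infty^+$ theory. The plan is therefore to prove the elementary items directly and to reduce the deeper ones to known one-sided results, indicating the adaptations needed.

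For (a), I will use Hölder/Jensen. If $p<q$, then $(\dashint_{I^+} w^{-1/(q-1)})^{q-1}\le (\dashint_{I^+} w^{-1/(p-1)})^{p-1}$, which gives $A_p^+\subset A_q^+$. Next, $\esssup_{I^+}w^{-1}$ dominates every such average, so $A_1^+\subset A_p^+$. Finally, Jensen gives $\exp(\dashint\log w^{-1})\le (\dashint w^{-1/(p-1)})^{p-1}$, so $A_p^+\subset A_\infty^+$.

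For (f), Hölder with exponent $1/\delta$ on $I^-$ gives $\dashint_{I^-} w^\delta\le(\dashint_{I^-} w)^\delta$. The $I^+$ factor for $w^\delta$ with exponent $1+\delta(p-1)$ is $(\dashint_{I^+} w^{-1/(p-1)})^{\delta(p-1)}$. Combining the two yields $[w^\delta]\le [w]^\delta$, and the cases $p=1,\infty$ are handled the same way.

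Part (c), $A_p=A_p^+\cap A_p^-$, is the standard observation that $A_p\subset A_p^\pm$ follows from doubling of the averages. For the converse, one splits a general interval $I$ into halves and quarters. The cross terms are controlled by $A_p^\pm$ directly. The diagonal terms $(\dashint_{I^-}w)(\dashint_{I^-}w^{-1/(p-1)})^{p-1}$ are controlled by passing through a neighbouring interval: apply $A_p^+$ to $I^-$ together with an interval to its right, and $A_p^-$ to that interval together with $I^-$, and multiply the two bounds. I will cite the literature for the details.

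For the deeper items (d), (b), (g), (h), I will rely on the one-sided theory.
\begin{itemize}
\item (d): the characterization of $A_\infty^+$ by the one-sided reverse Hölder inequality \eqref{eq_RH3} is due to Martín-Reyes, Pick and de la Torre. Its proof is a Calderón--Zygmund/Gehring type argument with one-sided maximal functions, together with the one-sided analogue $\dashint_{I^-} w\,|\{x\in I^+ : w\le\lambda\dashint w\}|$-type condition.
\item (e): apply Hölder on $I^-$ to \eqref{eq_RH2} and then use $\dashint_{I^+}w\le 2\dashint_I w$, which shows that \eqref{eq_RH2} implies \eqref{eq_RH3}; (d) then gives $w\in A_\infty^+$.
\item (g): combine the reverse Hölder property from (d) with the self-improvement (b) on the dual weight. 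Applying reverse Hölder to $w$ on $I^-$ and to $\sigma=w^{-1/(p-1)}$, which lies in $A_{p'}^-$, on $I^+$ shows that $w^{1+\delta}\in A_p^+$. This uses a slight enlargement of intervals, which one-sided estimates permit after a chaining argument.
\item (b): this is the one-sided openness $A_q^+=\bigcup_{p<q}A_p^+$, obtained by applying (g) to the dual weight $\sigma\in A_{q'}^-$. The case $q=\infty$ reduces to showing $A_\infty^+\subset\bigcup_p A_p^+$, which is known (Martín-Reyes, Pick, de la Torre).
\item (h): one inclusion is direct. If $w\in A_p^+$, then Jensen applied to $\log w$ on $I^-$ and on $I^+$ bounds $\dashint_{I^-}(\log w-(\log w)_{I^+})^+$. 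The converse uses the one-sided John--Nirenberg inequality of \cite{MT}: for $f\in\BMO^+$ and small $\alpha$, $e^{\alpha f}\in A_p^+$, and rescaling covers all $\alpha\ge0$.
\end{itemize}

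The main obstacle is (d) together with the endpoint $q=\infty$ in (b), since these require genuine one-sided covering and stopping-time arguments rather than algebraic manipulations. My plan is to cite these results from the literature rather than reprove them, and to give full details only for (a), (c), (e) and (f).
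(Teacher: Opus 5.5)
\textbf{Gap: part (g), and through it part (b) for $1<q<\infty$.} You derive (b) for finite $q$ from (g), so the problem in (g) carries over. Multiplying the one-sided reverse H\"older inequality \eqref{eq_RH3} for $w$ by its mirror image for $\sigma=w^{-1/(p-1)}\in A_{p'}^-$ gives only
\[
\Bigl(\dashint_{I^-} w^{1+\epsilon}\Bigr)\Bigl(\dashint_{I^+} \sigma^{1+\epsilon}\Bigr)^{p-1}
\le C\Bigl[\Bigl(\dashint_{I} w\Bigr)\Bigl(\dashint_{I} \sigma\Bigr)^{p-1}\Bigr]^{1+\epsilon}.
\]
The bracket is the \emph{two-sided} $A_p$ characteristic of $I$. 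It contains the wrong-way product $(\dashint_{I^+}w)(\dashint_{I^-}\sigma)^{p-1}$, which $A_p^+$ does not control. For example, $w(x)=e^x$ is increasing, so $[w]_{A_1^+}\le 1$, yet for $I=(0,L)$ the bracket is of order $e^{L}L^{-p}$.

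The ``slight enlargement plus chaining'' you invoke does not fix this. The one-sided reverse H\"older inequality has to use averages of $w$ reaching to the right of $I^-$: already for $w=e^x$, the ratio of $\dashint_{I^-}w^{1+\epsilon}$ to $(\dashint_{I^-}w)^{1+\epsilon}$ grows like $|I|^{\epsilon}$. Likewise its mirror has to use averages of $\sigma$ reaching to the left of $I^+$. So some overlapping or wrong-way pairing always survives. The same obstruction appears for $p=1$, since $\dashint_{I^+}w$ is not controlled by $\essinf_{I^+}w$, and your sketch does not treat $p=1$ or $p=\infty$ separately. Finally, as written the argument is circular: your (g) appeals to (b), while your (b) is obtained from (g).

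\textbf{How the paper proves (g).} For $p=1$ it cites \cite[Remark C]{S1}. For $1<p<\infty$ it uses Sawyer's factorization $w=w_1w_2^{1-p}$ with $w_1\in A_1^+$ and $w_2\in A_1^-$ \cite[Remark B]{S1}. It applies the $p=1$ case to $w_1$ and its mirror image to $w_2$, then recombines via $w^{1+\delta}=w_1^{1+\delta}(w_2^{1+\delta})^{1-p}$ and the easy direction of the factorization. The case $p=\infty$ then follows from (a) and (b). Part (b) for $1<q<\infty$ is also cited from \cite[Remark C]{S1}. Your dual-weight reduction of (b) to (g) is correct once (g) has an independent proof.

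\textbf{The other items.}
\begin{itemize}
\item (a), (e) and (f) agree with the paper.
\item (h) agrees with the paper, and your direct proof of the easy inclusion is correct.
\item (d) and the case $q=\infty$ of (b) are citations in both. Note that the paper takes (d) from \cite[Theorem 6.5]{CNO}, not from Mart\'in-Reyes, Pick and de la Torre.
\item For (c), your halving argument is a more elementary route than the paper's appeal to \cite[Theorem 4]{MOT}, and it works for every $1\le p\le\infty$. You should, however, state the step that closes it. After multiplying the $A_p^+$ and $A_p^-$ bounds on $I$, you divide by $(\dashint_{I^\pm}w)(\dashint_{I^\pm}\sigma)^{p-1}\ge 1$ (by H\"older, or by Jensen when $p=\infty$). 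This is what bounds the diagonal terms.
\end{itemize}
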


\begin{proof}
	(a) For any weight $w$, we have $[w]_{A_\infty^+} \le [w]_{A_q^+} \le [w]_{A_p^+} \le [w]_{A_1^+}$, where the first two inequalities follow from Jensen's inequality and the last inequality is trivial. The desired inclusions follow.
	
	(b) For $1<q<\infty$, see \cite[Remark C]{S1}. For $q = \infty$, see \cite[Theorem 1]{MPT}.
	
	(c) For $1 \le p <\infty$, see \cite[Theorem 4]{MOT}. The case $p = \infty$ follows from the case $1 \le p < \infty$, using (a), (b), and their analogues for usual Muckenhoupt weights.
	
	(d) See \cite[Theorem 6.5]{CNO}.
	
	(e) This follows from (d) since \eqref{eq_RH2} implies \eqref{eq_RH3} (with a different constant $C$).
	
	(f) Using Jensen's inequality, it is easy to show that $[w^\delta]_{A_q^+} \le [w]_{A_p^+}^\delta$, where $q=1+\delta(p-1)$. The desired implication follows.
	
	(g) For $p = 1$, see \cite[Remark C]{S1}. The case $1 < p < \infty$ follows from the case $p = 1$, using the factorization of $A_p^+$ weights stated in \cite[Remark B]{S1}. The case $p = \infty$ follows from the case $1 \le p < \infty$, using (a) and (b).
	
	(h) See \cite[Theorem 2]{MT}.
\end{proof}

The following proposition shows that the $A_p^+$ condition may be formulated in terms of a division of the interval $I$ into two subintervals which need not be of equal length.

\begin{proposition}
	\label{prop_Ap}
	Let $w$ be a weight on $\R$ and let $1<p<\infty$. Then $w \in A_p^+$ if and only if there exists $C>0$ such that, for all $a<b<c$, we have
	\begin{equation}
		\label{eq_Ap1}
		\left(\int_a^b w\right) \left(\int_b^c w^{-1/(p-1)}\right)^{p-1}
		\le C(c-a)^p.
	\end{equation}
\end{proposition}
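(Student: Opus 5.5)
The plan is a direct comparison in both directions, using only that $w$ and $\sigma \coloneq w^{-1/(p-1)}$ are nonnegative, so that integrals over subintervals are dominated by integrals over larger intervals. No earlier result is needed beyond Definition \ref{def_Ap}(e).

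\emph{Sufficiency.} Suppose \eqref{eq_Ap1} holds. Given $I=(a,c)$, let $b=\frac{a+c}{2}$, so that $I^-=(a,b)$, $I^+=(b,c)$ and $|I^\pm| = \frac{c-a}{2}$. Dividing \eqref{eq_Ap1} by $|I^-|\,|I^+|^{p-1} = 2^{-p}(c-a)^p$ gives
\[
\left(\dashint_{I^-} w\right)\left(\dashint_{I^+}\sigma\right)^{p-1} \le 2^pC .
\]
Hence $[w]_{A_p^+}\le 2^pC$.

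\emph{Necessity.} Suppose $w\in A_p^+$ and fix $a<b<c$. The idea is to embed $(a,b)$ and $(b,c)$ into the two halves of a single interval $J$ with common midpoint $b$ and $|J|\le 2(c-a)$. I split into two cases according to which piece is longer.

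\begin{enumerate}[label=\upshape(\roman*)]
\item If $b-a\le c-b$, set $h=c-b$ and $J=(b-h,c)$. Then $J^-=(b-h,b)\supset(a,b)$ and $J^+=(b,c)$.
\item If $b-a>c-b$, set $h=b-a$ and $J=(a,b+h)$. Then $J^-=(a,b)$ and $J^+=(b,b+h)\supset(b,c)$.
\end{enumerate}

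In either case $h\le c-a$ and $|J^\pm|=h$. By positivity of $w$ and $\sigma$ and the definition of $[w]_{A_p^+}$,
\[
\left(\int_a^b w\right)\left(\int_b^c \sigma\right)^{p-1}
\le \left(\int_{J^-} w\right)\left(\int_{J^+}\sigma\right)^{p-1}
\le [w]_{A_p^+}\,h^p \le [w]_{A_p^+}(c-a)^p .
\]
This is \eqref{eq_Ap1} with $C=[w]_{A_p^+}$.

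There is no real obstacle here. The only point requiring care is choosing $J$ so that its midpoint is exactly $b$, which makes the two sides line up with $J^-$ and $J^+$. The comparability $h\le c-a$ is what keeps the constant uniform in $a<b<c$.
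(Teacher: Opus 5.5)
Your proof is correct and is essentially the paper's argument. In the necessity direction you enclose $(a,b)$ and $(b,c)$ in the two halves of an interval centred at $b$ with half-length $\max(b-a,c-b)\le c-a$, and in the sufficiency direction you take $b$ to be the midpoint. Both constants, $[w]_{A_p^+}$ and $2^pC$, match the paper's.
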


\begin{proof}
	Suppose $w \in A_p^+$. For any $a<b<c$, there exists $I\subset\R$ such that $(a,b) \subset I^-$, $(b,c) \subset I^+$, and $|I^-| = |I^+| = \max(b-a,c-b)$. We estimate
	\begin{align*}
		(\tint_a^b w) (\tint_b^c w^{-1/(p-1)})^{p-1}
		&\le (\tint_{I^-} w) (\tint_{I^+} w^{-1/(p-1)})^{p-1} \\
		&\le [w]_{A_p^+} |I^-||I^+|^{p-1} \\
		&\le [w]_{A_p^+}(c-a)^p.
	\end{align*}
	Thus, if we take $C = [w]_{A_p^+}$, then \eqref{eq_Ap1} holds for all $a<b<c$.
	
	Conversely, suppose there exists $C>0$ such that \eqref{eq_Ap1} holds for all $a<b<c$. Given $I\subset\R$, write $I^- = (a,b)$ and $I^+ = (b,c)$. Then $|I^-|=|I^+|=\frac{1}{2}(c-a)$, so
	\begin{align*}
		(\tint_{I^-} w) (\tint_{I^+} w^{-1/(p-1)})^{p-1}
		&= (\tint_a^b w) (\tint_b^c w^{-1/(p-1)})^{p-1} \\
		&\le C(c-a)^p\\
		&= 2^p C |I^-||I^+|^{p-1}.
	\end{align*}
	Thus, $[w]_{A_p^+} \le 2^p C$, so $w \in A_p^+$.
\end{proof}

\subsection{Notions of porosity}

For any set $E \subset \R$ and any interval $I \subset \R$, we say that $I$ is \emph{$E$-free} if $I \cap E = \emptyset$.

\begin{definition}
	\label{def_length}
	Let $E \subset \R$ be a subset, let $0 < s \le 1$, and let $I \subset \R$ be an open interval. We define $L_s(E,I)$ to be the supremum of all $\ell>0$ for which there exist disjoint open intervals $I_1,\dots,I_k \subset I \setminus E$ (where $k \in \Z_{>0}$) such that $|I_i| \ge \ell$ for $i=1,\dots,k$ and $\sum_{i=1}^k |I_i| \ge (1-s)|I|$. If no such $\ell$ exists, we set $L_s(E,I) = 0$.
\end{definition}

When the set $E$ is clear from the context, we write $L_s(I)$ instead of $L_s(E,I)$. Note the following:
\begin{itemize}
	\item If $s<t$, then $L_s(E,I) \le L_t(E,I)$.
	\item If $E \subset F$, then $L_s(E,I) \ge L_s(F,I)$.
	\item $L_s(E,I) \le |I|$, with equality if and only if $I$ is $E$-free.
	\item If $I \setminus E$ contains an open interval, then $L_1(E,I)$ is the supremum of the lengths of the open intervals contained in $I \setminus E$. Otherwise, $L_1(E,I) = 0$.
\end{itemize}

The following two lemmas will be used frequently to simplify proofs.

\begin{lemma}
	\label{lem_length1}
	Let $E\subset\R$, $0<s\le1$, and $I\subset\R$.
	\begin{enumerate}[label=\upshape(\alph*)]
		\item $L_s(E,I) = L_s(\bar{E},I)$.
		\item If $E$ is closed and $|E|=0$, then $L_s(E,I) > 0$ and the supremum in the definition of $L_s(E,I)$ is achieved.
	\end{enumerate}
\end{lemma}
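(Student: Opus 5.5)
For part (a), the plan is to prove the two inequalities $L_s(E,I)\le L_s(\bar E,I)$ and $L_s(\bar E,I)\le L_s(E,I)$ separately. Since $E\subset\bar E$, the monotonicity remark after Definition~\ref{def_length} gives $L_s(E,I)\ge L_s(\bar E,I)$ at once. For the reverse inequality, take an admissible $\ell$ for $E$. This means there are disjoint open intervals $I_1,\dots,I_k\subset I\setminus E$ with $|I_i|\ge\ell$ and $\sum|I_i|\ge(1-s)|I|$. The key observation is that an open interval disjoint from $E$ is also disjoint from $\bar E$, because $I_i$ is an open set in the complement of $E$, so no point of $I_i$ is a limit point of $E$. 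Hence $I_i\subset I\setminus\bar E$, the same intervals witness that $\ell$ is admissible for $\bar E$, and $L_s(\bar E,I)\ge L_s(E,I)$. If no admissible $\ell$ exists, both sides are $0$ by the same argument run in both directions.

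For part (b), assume $E$ is closed with $|E|=0$. Then $I\setminus E$ is open, so it is a countable disjoint union of open intervals $\{J_m\}$ (its components), and $\sum_m|J_m|=|I\setminus E|=|I|$. I would argue as follows.

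\begin{enumerate}[label=\upshape(\arabic*)]
\item \emph{Positivity.} Since $s>0$, there is a finite $N$ with $\sum_{m\le N}|J_m|\ge(1-s)|I|$. Taking $\ell=\min_{m\le N}|J_m|>0$ shows $L_s(E,I)\ge\ell>0$.
\item \emph{Reduction to components.} Any admissible family $I_1,\dots,I_k$ for a given $\ell$ can be replaced by the components containing the $I_i$. Each component is at least as long, and total coverage does not decrease, provided repeated components are counted once. When several $I_i$ lie in one component, that component is still longer than each of them and covers at least their total length.
\item \emph{Finiteness of candidate values.} It therefore suffices to consider families of distinct components of length $\ge\ell$. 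Fix $\ell_0=L_s(E,I)/2>0$. Only finitely many components have length $\ge\ell_0$, since they are disjoint subsets of $I$. So the supremum is a maximum over finitely many candidate values, namely the lengths of those components.
\item \emph{Attainment.} The set of admissible $\ell\ge\ell_0$ is given by thresholds among these finitely many lengths, so it is closed and the supremum is attained. Concretely, let $\ell^*$ be the largest component length $\lambda$ such that the components of length $\ge\lambda$ have total length $\ge(1-s)|I|$. The family of all components of length $\ge\ell^*$ then achieves $L_s(E,I)=\ell^*$.
\end{enumerate}

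The main obstacle is step (2), the reduction to components, specifically making sure that merging several $I_i$ into one component preserves both the length lower bound and the coverage condition. This follows from disjointness and containment, as described in step (2).
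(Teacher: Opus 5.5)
Your proof is correct and follows essentially the same route as the paper. Part (a) uses the observation that an open interval misses $E$ if and only if it misses $\bar E$, and part (b) decomposes $I\setminus E$ into its component intervals, notes that any admissible family can be replaced by the components containing it, and identifies $L_s(E,I)$ as a threshold component length (the paper phrases this by ordering the components by decreasing length and taking the first index at which the cumulative length reaches $(1-s)|I|$).
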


\begin{proof}
	(a) This follows from the fact that an open interval is disjoint from $E$ if and only if it is disjoint from $\bar{E}$.
	
	(b) The set $I \setminus E$ is open, so we may write $I \setminus E = \bigsqcup_{i \in \Gamma} (a_i,b_i)$, where $\Gamma = \{1,...,n\}$ for some $n\in\Z_{\ge0}$ or $\Gamma = \Z_{>0}$. Then $\sum_{i\in\Gamma} (b_i-a_i) = |I|$; in particular, this sum is finite, so we may assume $b_1 - a_1 \ge b_2 - a_2 \ge \cdots$. Any open interval contained in $I \setminus E$ is contained in $(a_i,b_i)$ for some $i \in \Gamma$, so we deduce the following: If $0<s<1$, then there is a unique $k\in\Gamma$ such that $\sum_{i=1}^{k-1} (b_i-a_i) < (1-s)|I|$ and $\sum_{i=1}^k (b_i-a_i) \ge (1-s)|I|$, and $L_s(E,I) = b_k-a_k$. If $s=1$, then $L_s(E,I) = b_1-a_1$. In either case, $L_s(E,I)>0$ and the supremum is achieved.
\end{proof}

\begin{lemma}
	\label{lem_length2}
	Let $E \subset \R$. Consider the following conditions:
	\begin{enumerate}[label=\upshape(\alph*)]
		\item $|\bar{E}|=0$.
		\item For all $0<s<1$ and all $I\subset\R$, we have $L_s(E,I)>0$.
		\item For some $0<s<1$ and all $I\subset\R$, we have $L_s(E,I)>0$.
		\item For all $I\subset\R$, we have $L_1(E,I)>0$.
	\end{enumerate}
	Then {\upshape(a)}, {\upshape(b)} and {\upshape(c)} are equivalent, and {\upshape(a)} implies {\upshape(d)}.
\end{lemma}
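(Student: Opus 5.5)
The plan is to prove the chain (a) $\Rightarrow$ (b) $\Rightarrow$ (c) $\Rightarrow$ (a), and then separately (a) $\Rightarrow$ (d). Throughout, we use Lemma \ref{lem_length1}(a) to replace $E$ by $\bar{E}$ wherever convenient; this changes neither condition (a) nor any of the quantities $L_s(E,I)$.

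For (a) $\Rightarrow$ (b), assume $|\bar{E}|=0$. Then $\bar{E}$ is closed and of measure zero, so Lemma \ref{lem_length1}(b) gives $L_s(\bar E,I)>0$ for every $0<s\le 1$ and every $I$. Lemma \ref{lem_length1}(a) turns this into $L_s(E,I)>0$. The same argument with $s=1$ gives (a) $\Rightarrow$ (d). The implication (b) $\Rightarrow$ (c) is trivial.

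The substantive step is (c) $\Rightarrow$ (a), which I prove by contraposition. Suppose $|\bar{E}|>0$ and fix any $0<s<1$. By the Lebesgue density theorem there is a point $x$ of density of $\bar E$. Hence some interval $I$ centred at $x$ satisfies $|I\cap\bar E|>s|I|$, i.e.\ $|I\setminus\bar E|<(1-s)|I|$. Now any disjoint open intervals $I_1,\dots,I_k\subset I\setminus E$ are $\bar E$-free, so they lie in $I\setminus\bar E$. Their total length is therefore less than $(1-s)|I|$, so no admissible $\ell>0$ exists and $L_s(E,I)=0$. Since $s$ was arbitrary, this contradicts (c).

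I expect no real obstacle. The only point needing care is the observation that an open interval disjoint from $E$ is disjoint from $\bar E$, which is exactly Lemma \ref{lem_length1}(a) and its proof.
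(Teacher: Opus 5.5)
Your proposal is correct and follows essentially the same route as the paper. You reduce to $\bar E$ via Lemma \ref{lem_length1}, and the key step (c) $\Rightarrow$ (a) is the same Lebesgue density argument: near a density point, $E$-free intervals cannot cover a $(1-s)$-fraction of a small interval. The only difference is cosmetic: you phrase that step as a contraposition, while the paper phrases it as a contradiction obtained by letting $r\downarrow 0$.
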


\begin{proof}
	By Lemma \ref{lem_length1}(a), we may assume $E$ is closed. If $|E|=0$, then, by Lemma \ref{lem_length1}(b), $L_s(E,I)>0$ for all $0 < s \le 1$ and $I \subset \R$. Thus, (a) implies both (b) and (d). Clearly, (b) implies (c). It remains to prove that (c) implies (a).
	
	Suppose (c) holds and $|E|>0$. Then, by Lebesgue's density theorem, there exists $x \in E$ such that $|E \cap I(x,r)|/|I(x,r)| \to 1$ as $r \downarrow 0$, where $I(x,r)=(x-r,x+r)$. For each $r>0$, we have $L_s(E,I(x,r))>0$, so there exist disjoint $I_1,\dots,I_k \subset I(x,r) \setminus E$ such that $\sum_{i=1}^k |I_i| \ge (1-s)|I(x,r)|$. This implies that $|E \cap I(x,r)| \le s|I(x,r)|$, which leads to a contradiction when we let $r \downarrow 0$. Thus, $|E|=0$, i.e.\ (a) holds. This completes the proof.
\end{proof}

\begin{remark}
	\label{rmk_length}
	In Lemma \ref{lem_length2}, (d) does not imply (a) in general. For example, let $E \subset [0, 1]$ be a Cantor set with $|E|>0$. Then $E$ is a compact, nowhere dense set. For every $I\subset\R$, we have $I \not\subset E$, so we may pick $x \in I \setminus E$. Then there exists $r>0$ such that $(x-r,x+r) \subset I \setminus E$, so $L_1(E,I)>0$.
\end{remark}

Of the four notions of porosity in the following definition, the first three were introduced in \cite{ALMV}, \cite{PU} and \cite{AGGM}, respectively, and the last one is introduced in the present paper.

\begin{definition}
	\label{def_porous}
	Let $E \subset \R$ be a subset, let $0<s<t<1$, and let $0<\delta<1$. We say that $E$ is
	\begin{enumerate}[label=\upshape(\alph*)]
		\item $(s,\delta)$-\emph{weakly porous} if, for every open interval $I\subset\R$, there exist disjoint open intervals $I_1,\dots,I_k \subset I \setminus E$ such that $|I_i| \ge \delta L_1(E,I)$ for $i=1,\dots,k$ and $\sum_{i=1}^k |I_i| \ge (1-s)|I|$;
		\item $(s,t,\delta)$-\emph{median porous} if, for every open interval $I\subset\R$, there exist disjoint open intervals $I_1,\dots,I_k \subset I \setminus E$ such that $|I_i| \ge \delta L_t(E,I)$ for $i=1,\dots,k$ and $\sum_{i=1}^k |I_i| \ge (1-s)|I|$;
		\item $(s,\delta)$-\emph{right weakly porous} if, for every open interval $I\subset\R$, there exist disjoint open intervals $I_1,\dots,I_k \subset I^-\setminus E$ such that $|I_i| \ge \delta L_1(E,I^+)$ for $i=1,\dots,k$ and $\sum_{i=1}^k |I_i| \ge (1-s)|I^-|$;
		\item $(s,t,\delta)$-\emph{right median porous} if, for every open interval $I\subset\R$, there exist disjoint open intervals $I_1,\dots,I_k \subset I^- \setminus E$ such that $|I_i| \ge \delta L_t(E,I^+)$ for $i=1,\dots,k$ and $\sum_{i=1}^k |I_i| \ge (1-s)|I^-|$.
	\end{enumerate}
\end{definition}

Left weakly porous and left median porous sets are defined similarly; simply interchange $I^-$ and $I^+$ in the definitions of right weakly porous and right median porous sets. We sometimes leave $\delta$, and maybe even $s$ and $t$, unspecified, to keep the notation manageable.

It is easy to see that, if $E$ is $(s,\delta)$-weakly porous (resp.\ $(s,\delta)$-right weakly porous), then $E$ is $(s,t,\delta)$-median porous (resp.\ $(s,t,\delta)$-right median porous) for all $t$ such that $s<t<1$.

The following lemma provides an alternative definition of right median porosity which is often useful.

\begin{lemma}
	\label{lem_porous1}
	Let $E\subset\R$, $0<s<t<1$, and $0<\delta<1$.
	\begin{enumerate}[label=\upshape(\alph*)]
		\item $E$ is $(s,t,\delta)$-right median porous if and only if $\bar{E}$ is $(s,t,\delta)$-right median porous.
		\item $E$ is $(s,t,\delta)$-right median porous if and only if $|\bar{E}|=0$ and $L_s(E,I^-) \ge \delta L_t(E,I^+)$ for all $I\subset\R$.
	\end{enumerate}
\end{lemma}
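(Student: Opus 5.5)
For part (a), the plan is to show that the defining condition in Definition \ref{def_porous}(d) is literally the same for $E$ and for $\bar{E}$. Fix $I\subset\R$. By Lemma \ref{lem_length1}(a), $L_t(E,I^+) = L_t(\bar{E},I^+)$, so the lower bound $\delta L_t(\cdot,I^+)$ on the lengths of the intervals $I_i$ is unchanged. Since each $I_i$ is open, $I_i \subset I^-\setminus E$ if and only if $I_i \subset I^-\setminus\bar{E}$. Hence a family $I_1,\dots,I_k$ witnesses the condition for $E$ if and only if it witnesses it for $\bar{E}$, and (a) follows.

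For part (b), first reduce to the case that $E$ is closed. By (a), the left-hand side of (b) is unchanged when $E$ is replaced by $\bar{E}$, and by Lemma \ref{lem_length1}(a) so is the right-hand side. So assume $E$ is closed.

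For the forward direction, suppose $E$ is $(s,t,\delta)$-right median porous.
\begin{itemize}
\item To get $L_s(E,I^-) \ge \delta L_t(E,I^+)$, fix $I$ and take the intervals $I_1,\dots,I_k$ supplied by the definition. They are admissible in Definition \ref{def_length} for $L_s(E,I^-)$ with $\ell = \delta L_t(E,I^+)$. If $\ell>0$ this gives $L_s(E,I^-)\ge \ell$; if $\ell=0$ the inequality is trivial.
\item To get $|E|=0$, apply the definition to an arbitrary interval $J\subset\R$ by choosing $I$ with $I^- = J$. The intervals $I_i$ have total length at least $(1-s)|J|$, so in particular $k\ge1$ and each $|I_i|>0$ (they are nonempty open intervals). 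Hence $L_s(E,J)>0$ for every $J$. By Lemma \ref{lem_length2}, (c) implies (a), so $|\bar{E}|=0$.
\end{itemize}

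For the converse, suppose $|E|=0$ and $L_s(E,I^-)\ge\delta L_t(E,I^+)$ for every $I$. By Lemma \ref{lem_length1}(b), since $E$ is closed with $|E|=0$, the supremum defining $L_s(E,I^-)$ is attained. So there exist disjoint open intervals $I_1,\dots,I_k\subset I^-\setminus E$ with
\[
|I_i|\ge L_s(E,I^-)\ge \delta L_t(E,I^+) \quad\text{and}\quad \sum_{i=1}^k |I_i|\ge(1-s)|I^-|.
\]
This is exactly the defining condition in Definition \ref{def_porous}(d).

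The only delicate point is this attainment of the supremum. Without it we could only choose intervals of length at least $L_s(E,I^-)-\varepsilon$, which would lose a factor in $\delta$ (unless $L_t(E,I^+)=0$, which is trivial). That is why the reduction to closed $E$, together with Lemma \ref{lem_length1}(b), is essential. Everything else is a direct unwinding of the definitions.
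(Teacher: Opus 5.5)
Your proof is correct and follows the paper's argument essentially step for step. Part (a) uses Lemma \ref{lem_length1}(a) and the fact that an open interval misses $E$ if and only if it misses $\bar{E}$. Part (b) reduces to closed $E$, gets $|\bar{E}|=0$ from $L_s(E,J)>0$ via Lemma \ref{lem_length2}, and proves the converse from the attainment of the supremum in Lemma \ref{lem_length1}(b); your separate handling of the case $\delta L_t(E,I^+)=0$ and of $\min_i |I_i|>0$ only makes explicit what the paper leaves implicit.
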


\begin{proof}
	(a) This follows from Lemma \ref{lem_length1}(a) and the fact stated in its proof.
	
	(b) By (a) and Lemma \ref{lem_length1}(a), we may assume $E$ is closed. Suppose $E$ is $(s,t,\delta)$-right median porous. Then $L_s(E,I^-) \ge \delta L_t(E,I^+)$ for all $I\subset\R$ and $L_s(E,I^-)>0$ for all $I\subset\R$. The latter is equivalent to saying that $L_s(E,I)>0$ for all $I\subset\R$, so $|E|=0$ by Lemma \ref{lem_length2}.
	
	Conversely, suppose $|E|=0$ and $L_s(E,I^-) \ge \delta L_t(E,I^+)$ for all $I\subset\R$. By Lemma \ref{lem_length1}(b), for all $I\subset\R$, we have $L_s(E,I^-)>0$ and the supremum in the definition of $L_s(E,I^-)$ is achieved. It follows that $E$ is $(s,t,\delta)$-right median porous.
\end{proof}

Evidently, Lemma \ref{lem_porous1} remains true (with the same proof) if we replace ``$(s,t,\delta)$-right median porous'' and ``$L_s(E,I^-) \ge \delta L_t(E,I^+)$'' with any one of the following:
\begin{itemize}
	\item ``$(s,\delta)$-right weakly porous'' and ``$L_s(E,I^-) \ge \delta L_1(E,I^+)$'';
	\item ``$(s,t,\delta)$-median porous'' and ``$L_s(E,I) \ge \delta L_t(E,I)$'';
	\item ``$(s,\delta)$-weakly porous'' and ``$L_s(E,I) \ge \delta L_1(E,I)$''.
\end{itemize}

\subsection{Medians}

\begin{definition}
	\label{def_median1}
	Let $I \subset \R$ be an open interval, let $f : I \to \R$ be a measurable function, and let $0<s<1$. We define the \emph{maximal median of $f$ over $I$ with parameter $s$} by
	\[
	M_s(f,I) = \sup\{\lambda\in\R : |\{x \in I : f(x) < \lambda\}| \le s|I|\}.
	\]
\end{definition}

When the function $f$ is clear from the context, we write $M_s(I)$ instead of $M_s(f,I)$. Note the following:
\begin{itemize}
	\item If $s<t$, then $M_s(f,I) \le M_t(f,I)$.
	\item If $f \le g$ a.e., then $M_s(f,I) \le M_s(g,I)$.
	\item For any $c\in\R$, we have $M_s(f+c,I) = M_s(f,I)+c$.
	\item For any $c>0$, we have $M_s(cf,I) = cM_s(f,I)$.
\end{itemize}

The following lemma contains several basic properties of medians. We include the proof for completeness.

\begin{lemma}
	\label{lem_median1}
	Let $f : I \to \R$ be a measurable function, let $0<s<t<1$, and let $c \in \R$.
	\begin{enumerate}[label=\upshape(\alph*)]
		\item $M_s(f,I)$ is a real number.
		\item $|\{x \in I : f(x) < M_s(f,I)\}| \le s|I|$.
		\item $|\{x \in I : f(x) > M_s(f,I)\}| \le (1-s)|I|$.
		\item If $f \ge c$ (resp.\ $f \le c$) a.e., then $M_s(f,I) \ge c$ (resp.\ $M_s(f,I) \le c$).
		\item If $f>c$ (resp.\ $f<c$) a.e., then $M_s(f,I) > c$ (resp.\ $M_s(f,I) < c$).
		\item If $I = I_1 \sqcup I_2$, then
		\[
		\min(M_s(f,I_1),M_s(f,I_2)) \le M_s(f,I) \le \max(M_s(f,I_1),M_s(f,I_2)).
		\]
		\item $M_{1-t}(-f,I) \le -M_s(f,I) \le M_{1-s}(-f,I)$.
	\end{enumerate}
	Let $J \subset \R$ be a (possibly unbounded) open interval such that $f(I) \subset J$ (up to a subset of $I$ of measure zero) and let $g : J \to \R$ be a continuous function.
	\begin{enumerate}[label=\upshape(\alph*), resume]
		\item If $g$ is strictly increasing, then $g(M_s(f,I)) = M_s(g \circ f,I)$.
		\item If $g$ is strictly decreasing, then $M_{1-t}(g\circ f, I) \le g(M_s(f,I)) \le M_{1-s}(g \circ f,I)$.
	\end{enumerate}
\end{lemma}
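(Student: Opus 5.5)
Write $\phi(\lambda)=|\{x\in I : f(x)<\lambda\}|$ and $\Lambda_s=\{\lambda\in\R : \phi(\lambda)\le s|I|\}$, so that $M_s(f,I)=\sup\Lambda_s$. The function $\phi$ is nondecreasing and left-continuous, since $\{f<\lambda\}=\bigcup_{\mu<\lambda}\{f<\mu\}$. It tends to $0$ as $\lambda\to-\infty$ and to $|I|$ as $\lambda\to+\infty$, because $f$ is real-valued.

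The items are proved in order, each from these facts.

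\textbf{(a)} Since $\phi(\lambda)\to0$, $\Lambda_s$ is nonempty. Since $\phi(\lambda)\to|I|>s|I|$, it is bounded above. Hence $M_s$ is real.

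\textbf{(b)} Take $\lambda_n\uparrow M_s$ with $\lambda_n\in\Lambda_s$; this is possible because $\Lambda_s$ is a down-set. Left continuity gives $\phi(M_s)\le s|I|$.

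\textbf{(c)} For $\lambda>M_s$ we have $\phi(\lambda)>s|I|$, that is, $|\{f\ge\lambda\}|<(1-s)|I|$. Let $\lambda\downarrow M_s$: the sets $\{f\ge\lambda\}$ increase to $\{f>M_s\}$, so $|\{f>M_s\}|\le(1-s)|I|$.

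\textbf{(d)} If $f\ge c$ a.e., then $\phi(c)=0$, so $c\in\Lambda_s$. If $f\le c$ a.e., then $\phi(\lambda)=|I|$ for every $\lambda>c$, so $M_s\le c$.

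\textbf{(e)} If $f>c$ a.e., then $\phi(c+\epsilon)\to|\{f\le c\}|=0$ as $\epsilon\downarrow0$. So some $c+\epsilon\in\Lambda_s$, and $M_s>c$. If $f<c$ a.e., then $\phi(c)=|I|>s|I|$. Because $\Lambda_s$ is a down-set containing its supremum by (b), $\sup\Lambda_s<c$.

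\textbf{(f)} Let $m=\min(M_s(I_1),M_s(I_2))$. Then $\phi_I(m)=\phi_{I_1}(m)+\phi_{I_2}(m)\le s|I_1|+s|I_2|$, using (b) and monotonicity. So $m\in\Lambda_s(I)$. For the upper bound, if $\lambda>\max$, then $\phi_{I_i}(\lambda)>s|I_i|$ for both $i$, so the sum exceeds $s|I|$.

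\textbf{(g)} Put $m=M_s(f,I)$. By (c), $|\{-f<-m\}|\le(1-s)|I|$, so $-m\le M_{1-s}(-f,I)$. For the other inequality, suppose $\lambda:=M_{1-t}(-f,I)>-m$. By (b), $|\{-f<\lambda\}|\le(1-t)|I|$, so $|\{f\le -\lambda\}|\ge t|I|$. Choose $\mu$ with $-\lambda<\mu<m$. Then $\phi(\mu)\ge t|I|>s|I|$, yet $\mu<m$ forces $\phi(\mu)\le s|I|$. This contradiction gives $M_{1-t}(-f,I)\le -m$.

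\textbf{(h)} Since $g$ is a strictly increasing continuous bijection of $J$ onto $g(J)$, we have $\{g\circ f<g(\lambda)\}=\{f<\lambda\}$ a.e. for $\lambda\in J$. Thus $\Lambda_s(g\circ f)\cap g(J)=g(\Lambda_s(f)\cap J)$. Because $f\in J$ a.e., the set $\Lambda_s(f)$ contains points of $J$, and $M_s(f)$ lies in the closure of $J$.

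The main point needing care is whether $M_s(f)$ lies in $J$ itself rather than only in its closure; continuity of $g$ alone does not settle this. Apply (e) at the endpoints of $J$ (or (d) at interior points) to see that $M_s(f)\in J$. Similarly $M_s(g\circ f)\in g(J)$. The sup then commutes with $g$ by monotonicity and continuity.

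\textbf{(i)} Write $g=h\circ(-\,\cdot)$ with $h(y)=g(-y)$ strictly increasing on $-J$. Apply (h) to $-f$ to get $M_r(g\circ f)=h(M_r(-f))$ for $r=1-t,1-s$. Then combine with (g), using that $h$ is increasing.
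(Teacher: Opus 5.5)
Your proof is correct and follows essentially the same route as the paper: monotonicity and left-continuity of $\lambda\mapsto|\{f<\lambda\}|$ give (a)--(g), item (e) places $M_s(f,I)$ inside $J$ for (h), and (i) is deduced from (h) and (g). The differences are cosmetic. In (h) you match the sets $\Lambda_s$ under $g$ instead of using the paper's $\pm\epsilon$ sandwich, and in (i) you write $g=h\circ(-\,\cdot)$ and apply (h) to $-f$, whereas the paper applies (h) to $-g$ and then (g) to $-g\circ f$.
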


\begin{proof}
	(a) As $\lambda \downarrow -\infty$, we have $\{x \in I : f(x) < \lambda\} \downarrow \emptyset$ and hence $|\{x \in I : f(x) < \lambda\}| \downarrow 0$. As $\lambda \uparrow +\infty$, we have $\{x \in I : f(x) < \lambda\} \uparrow I$ and hence $|\{x \in I : f(x) < \lambda\}| \uparrow |I|$. Since the function $\lambda \mapsto |\{x \in I : f(x) < \lambda\}|$ is increasing, it follows that $M_s(f,I)$ is a real number.
	
	(b) As $\lambda \uparrow M_s(f,I)$, we have $\{x \in I : f(x) < \lambda\} \uparrow \{x \in I : f(x) < M_s(f,I)\}$ and hence $|\{x \in I : f(x) < \lambda\}| \uparrow |\{x \in I : f(x) < M_s(f,I)\}|$. Since $|\{x \in I : f(x) < \lambda\}| \le s|I|$ for all $\lambda < M_s(f,I)$, it follows that $|\{x \in I : f(x) < M_s(f,I)\}| \le s|I|$.
	
	(c) As $\lambda \downarrow M_s(f,I)$, we have $\{x \in I : f(x) < \lambda\} \downarrow \{x \in I : f(x) \le M_s(f,I)\}$ and hence $|\{x \in I : f(x) < \lambda\}| \downarrow |\{x \in I : f(x) \le M_s(f,I)\}|$. Since $|\{x \in I : f(x) < \lambda\}| > s|I|$ for all $\lambda > M_s(f,I)$, it follows that $|\{x \in I : f(x) \le M_s(f,I)\}| \ge s|I|$, so (c) holds.
	
	(d) If $f \ge c$ a.e.\ and $M_s(f,I) < c$, then, by (c), we have
	\[
	|I| = |\{x \in I : f(x) \ge c\}|
	\le |\{x \in I : f(x) > M_s(f,I)\}|
	\le (1-s)|I| < |I|,
	\]
	which is a contradiction. If $f \le c$ a.e.\ and $M_s(f,I) > c$, then, by (b), we have
	\[
	|I| = |\{x \in I : f(x) \le c\}|
	\le |\{x \in I : f(x) < M_s(f,I)\}|
	\le s|I| < |I|,
	\]
	which is a contradiction.
	
	(e) This can be proved in the same way as (d).
	
	(f) Let $\lambda_1 = M_s(f,I_1)$ and $\lambda_2 = M_s(f,I_2)$. If $\lambda < \min(\lambda_1,\lambda_2)$, then
	\[
	|\{x \in I : f(x) < \lambda\}|
	= |\{x \in I_1 : f(x) < \lambda\}| + |\{x \in I_2 : f(x) < \lambda\}|
	\le s|I_1| + s|I_2|
	= s|I|,
	\]
	so $\lambda \le M_s(f,I)$. Thus, $\min(\lambda_1,\lambda_2) \le M_s(f,I)$. If $\lambda>\max(\lambda_1,\lambda_2)$, then
	\[
	|\{x \in I : f(x) < \lambda\}|
	= |\{x \in I_1 : f(x) < \lambda\}| + |\{x \in I_2 : f(x) < \lambda\}|
	> s|I_1| + s|I_2|
	= s|I|,
	\]
	so $\lambda \ge M_s(f,I)$. Thus, $\max(\lambda_1,\lambda_2) \ge M_s(f,I)$.
	
	(g) By (c), we have
	\[
	|\{x \in I : -f(x) < -M_s(f,I)\}|
	= |\{x \in I : f(x) > M_s(f,I)\}|
	\le (1-s)|I|,
	\]
	so $M_{1-s}(-f,I) \ge -M_s(f,I)$. For every $\epsilon>0$, by (b), we have
	\begin{align*}
		|\{x \in I : -f(x) < -M_s(f,I) + \epsilon\}|
		&\ge |\{x \in I : -f(x) \le -M_s(f,I)\}| \\
		&= |\{x \in I : f(x) \ge M_s(f,I)\}| \ge (1-s)|I| > (1-t)|I|
	\end{align*}
	and hence $M_{1-t}(-f,I) \le -M_s(f,I) + \epsilon$, so $M_{1-t}(-f,I) \le -M_s(f,I)$.
	
	(h) By (e), we have $M_s(f,I) \in J$. For every sufficiently small $\epsilon>0$, we have
	\[
	|\{x \in I : (g \circ f)(x) < g(M_s(f,I) - \epsilon)\}|
	= |\{x \in I : f(x) < M_s(f,I) - \epsilon\}|
	\le s|I|
	\]
	and
	\[
	|\{x \in I : (g \circ f)(x) < g(M_s(f,I) + \epsilon)\}|
	= |\{x \in I : f(x) < M_s(f,I) + \epsilon\}|
	> s|I|,
	\]
	so $M_s(g \circ f,I) \ge g(M_s(f,I) - \epsilon)$ and $M_s(g \circ f,I) \le g(M_s(f,I) + \epsilon)$. Now let $\epsilon \to 0$.
	
	(i) Since $-g$ is strictly increasing, (h) gives $-g(M_s(f,I)) = M_s(-g \circ f,I)$. By (g), we have $M_{1-t}(g \circ f,I) \le -M_s(-g \circ f,I) \le M_{1-s}(g \circ f,I)$. The desired inequalities follow.
\end{proof}

The following three results on medians will be useful for our purposes.

\begin{lemma}
	\label{lem_median2}
	Let $f : I \to \R$ be a measurable function and let $0<s<1$. If $f>0$ a.e., then, for all $p \in \R$, we have $M_s(f,I)^p \lesssim_s \dashint_I f^p$.
\end{lemma}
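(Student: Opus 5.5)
The idea is to use Chebyshev's inequality on the set where $f$ lies on the ``correct side'' of the median, with the side determined by the sign of $p$. Let $m = M_s(f,I)$. Since $f>0$ a.e., Lemma \ref{lem_median1}(e) (with $c=0$) gives $m>0$, so $m^p$ is a well-defined positive real number for every $p\in\R$. The case $p=0$ is trivial, since both sides equal $1$. We then treat $p>0$ and $p<0$ separately. Each case reduces to a lower bound on the measure of a superlevel or sublevel set at height $m$, and these bounds come from Lemma \ref{lem_median1}(b) and (c).

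For $p>0$, the map $x\mapsto x^p$ is increasing on $(0,\infty)$, so $f^p \ge m^p$ on $\{x\in I : f(x)\ge m\}$. By Lemma \ref{lem_median1}(b), $|\{x\in I: f(x)<m\}|\le s|I|$, hence $|\{x\in I : f(x)\ge m\}| \ge (1-s)|I|$. Since $f^p\ge0$, we get
\[
\int_I f^p \ge \int_{\{f\ge m\}} f^p \ge m^p(1-s)|I|,
\]
that is, $m^p \le (1-s)^{-1}\dashint_I f^p$.

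For $p<0$, the map $x\mapsto x^p$ is decreasing on $(0,\infty)$, so $f^p\ge m^p$ on $\{x\in I: f(x)\le m\}$. Lemma \ref{lem_median1}(c) gives $|\{f>m\}|\le(1-s)|I|$, hence $|\{x\in I: f(x)\le m\}|\ge s|I|$. This is exactly the inequality established in the proof of (c). Therefore $\int_I f^p \ge m^p s|I|$, that is, $m^p\le s^{-1}\dashint_I f^p$.

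Combining the three cases, the implicit constant can be taken to be $\max(s^{-1},(1-s)^{-1})$, which depends only on $s$. I do not expect a genuine obstacle here. The only point requiring care is the case $p<0$, where one must use the sublevel set $\{f\le m\}$ (with the non-strict inequality) rather than $\{f<m\}$. This is what supplies the measure bound $s|I|$.
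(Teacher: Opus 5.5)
Your proof is correct and is essentially the paper's argument: Markov's inequality on a level set of $f$ at the median, with the same constants $(1-s)^{-1}$ for $p>0$ and $s^{-1}$ for $p<0$. The only difference is in how the bound reaches $f^p$. The paper first proves the case $p=1$ and then transfers it to $f^p$ via Lemma \ref{lem_median1}(h) and (i), passing through $M_{1-s}(f^p,I)$ when $p<0$. You instead apply the Chebyshev bound to $f^p$ directly on $\{f\ge m\}$ or $\{f\le m\}$, which bypasses those composition lemmas.
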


\begin{proof}
	This was proved in \cite{PU}, but we include the proof for completeness. By Lemma \ref{lem_median1}(e), we have $M_s(f,I) > 0$. By Lemma \ref{lem_median1}(b) and Markov's inequality, we have
	\[
	(1-s)|I| \le |\{x \in I : f(x) \ge M_s(f,I)\}| \le \frac{1}{M_s(f,I)} \int_I f.
	\]
	Hence
	\begin{equation}
		\label{eq_median1}
		M_s(f,I) \le \frac{1}{1-s}~\dashint_I f.
	\end{equation}
	If $p=0$, there is nothing to prove. If $p>0$, then the function $t \mapsto t^p$ is strictly increasing on $(0,\infty)$, so, by Lemma \ref{lem_median1}(h) and \eqref{eq_median1}, we have $M_s(f,I)^p = M_s(f^p,I) \le \frac{1}{1-s}~\dashint_I f^p$. If $p<0$, then the function $t \mapsto t^p$ is strictly decreasing on $(0,\infty)$, so, by Lemma \ref{lem_median1}(i) and \eqref{eq_median1}, we have $M_s(f,I)^p \le M_{1-s}(f^p,I) \le \frac{1}{s}~\dashint_I f^p$.
\end{proof}

\begin{proposition}
	\label{prop_median}
	Let $f : I \to \R$ be a measurable function and let $0<s<1$. Then, for a.e.\ $x \in I$, and for every $x \in I$ at which $f$ is continuous, we have
	\[
	\lim_{I \ni x, I \downarrow x} M_s(f,I) = f(x).
	\]
\end{proposition}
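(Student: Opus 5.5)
The plan is to trap $M_s(f,I)$ between $f(x)-\epsilon$ and $f(x)+\epsilon$ using the two tail estimates of Lemma \ref{lem_median1}(b),(c). The relevant set of points $x$ is the set of points of approximate continuity of $f$. Recall that $x$ is a point of approximate continuity if, for every $\epsilon>0$,
\[
\frac{|\{y \in I' : |f(y)-f(x)| \ge \epsilon\}|}{|I'|} \to 0
\]
as $I' \ni x$ shrinks to $x$, where $I'$ ranges over open intervals containing $x$. Every point of continuity clearly has this property. For a measurable, a.e.\ finite $f$, a.e.\ point has it; this is the standard consequence of Lebesgue's density theorem.

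To see the a.e.\ statement, fix $\epsilon>0$ and cover $\R$ by countably many sets $A_k=\{y: f(y)\in[k\epsilon/2,(k+1)\epsilon/2)\}$. At a density point $x$ of $A_k$ with $x\in A_k$, the set $\{|f-f(x)|\ge\epsilon\}$ is contained in $I'\setminus A_k$, which has small relative measure. The density points are taken with respect to intervals containing $x$, not necessarily centred at $x$. This is still fine, because such an $I'$ is contained in the centred interval $(x-|I'|,x+|I'|)$, which has length $2|I'|$. Finally, take the intersection over $\epsilon=1/m$.

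Now fix such a point $x$ and $\epsilon>0$, and choose $I'\ni x$ short enough that $|\{y\in I' : |f(y)-f(x)|\ge\epsilon\}| < \min(s,1-s)|I'|$.

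\emph{Lower bound.} With $\lambda=f(x)-\epsilon$ we get $|\{f<\lambda\}\cap I'| < s|I'|$. By the definition of the maximal median this gives $M_s(f,I')\ge f(x)-\epsilon$.

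\emph{Upper bound.} Suppose $M_s(f,I')>f(x)+\epsilon$. Then Lemma \ref{lem_median1}(b) gives $|\{f<M_s\}\cap I'|\le s|I'|$. On the other hand, $\{f<M_s\}$ contains $\{|f-f(x)|<\epsilon\}$, which has measure $>(1-(1-s))|I'|=s|I'|$. This is a contradiction, so $M_s(f,I')\le f(x)+\epsilon$.

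The main point to watch is that the density argument must handle non-centred intervals $I'\ni x$. This is handled by the comparison with centred intervals given above. The rest is routine.
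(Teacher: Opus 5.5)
Your argument is correct and complete. The paper gives no proof of its own here: it simply cites \cite[Theorem 2.1]{PT}, so your proposal is a self-contained replacement for an external reference rather than a variant of an argument in the text.

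Your route runs as follows. You reduce to points of approximate continuity, which you obtain from Lebesgue's density theorem applied to the countably many level bands $A_k$. You then squeeze $M_s(f,I')$ using the definition of the maximal median for the lower bound and Lemma \ref{lem_median1}(b) for the upper bound. Requiring the exceptional set to have relative measure below $\min(s,1-s)$ is exactly what makes both bounds go through.

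Compared with the bare citation, your version describes the good set precisely: every point of approximate continuity, which includes every continuity point and almost every point. It also has a further advantage. You compare $I'$ with the centred interval $(x-|I'|,x+|I'|)$ of twice the length, so the same estimate holds for intervals that merely have $x$ in their closure. This matters in the paper itself. In the proof of property (c) of Lemma \ref{lem_JN}, the proposition is applied to $M_s(g,(y,x))$ as $y\uparrow x$, and there $x\notin(y,x)$. The statement as written, with $I\ni x$, does not literally cover that use, but your proof covers it verbatim.
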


\begin{proof}
	See \cite[Theorem 2.1]{PT}.
\end{proof}

\begin{lemma}
	\label{lem_median3}
	Let $f : [a,b] \to \R$ be a measurable function and let $0<s<1$. For $x \in (a,b]$, let $F(x) = M_s(f,[a,x])$.
	\begin{enumerate}[label=\upshape(\alph*)]
		\item $F$ is upper semicontinuous.
		\item If $f$ is continuous, then $F$ is continuous.
	\end{enumerate}
\end{lemma}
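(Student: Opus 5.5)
For (a), I would argue straight from Definition \ref{def_median1} using the distribution function
\[
\varphi_y(\lambda) = |\{t \in [a,y] : f(t) < \lambda\}|, \qquad y \in (a,b],\ \lambda \in \R .
\]
The key observation is that, for fixed $\lambda$, the map $y \mapsto \varphi_y(\lambda) - s(y-a)$ is continuous, indeed Lipschitz with constant at most $1+s$. This is because $|\varphi_y(\lambda) - \varphi_{y'}(\lambda)| \le |y-y'|$.

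Fix $x \in (a,b]$ and $\lambda > F(x)$, and choose $\lambda'$ with $F(x) < \lambda' < \lambda$. By the definition of the supremum, $\varphi_x(\lambda') > s(x-a)$. By continuity in $y$, the strict inequality $\varphi_y(\lambda') > s(y-a)$ persists for all $y$ in a neighbourhood of $x$. Since $\varphi_y$ is increasing, every $\mu \ge \lambda'$ then fails the defining condition for $F(y)$, so $F(y) \le \lambda' < \lambda$. Hence $\limsup_{y \to x} F(y) \le F(x)$, which is (a).

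For (b), it remains to prove lower semicontinuity: given $\lambda < F(x)$, I want $F(y) > \lambda$ for all $y$ near $x$. The natural attempt mirrors (a). Pick $\lambda''$ with $\lambda < \lambda'' < F(x)$; then $\varphi_x(\lambda'') \le s(x-a)$. If this inequality is strict for some such $\lambda''$, continuity in $y$ gives $\varphi_y(\lambda'') < s(y-a)$ near $x$, and hence $F(y) \ge \lambda'' > \lambda$. The obstacle is the case where $\varphi_x(\mu) = s(x-a)$ for every $\mu \in (\lambda, F(x))$, since equality need not persist under perturbation. This is exactly where continuity of $f$ must be used.

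In that degenerate case, $\varphi_x$ is constant on $(\lambda, F(x))$. Hence the set $\{t \in [a,x] : \lambda < f(t) < F(x)\}$ has measure zero. But this set is relatively open in $[a,x]$ because $f$ is continuous, so it must be empty. Then $[a,x]$ is the disjoint union of the closed sets $\{f \le \lambda\}$ and $\{f \ge F(x)\}$. By connectedness of $[a,x]$, one of them is all of $[a,x]$. The first option would force $F(x) \le \lambda$ by Lemma \ref{lem_median1}(d), which is a contradiction. So $f \ge F(x)$ on $[a,x]$. By continuity of $f$ at $x$, for $y$ near $x$ we get $f > \lambda$ on $[a,y]$, and Lemma \ref{lem_median1}(e) gives $F(y) > \lambda$. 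Combining the two cases, $F$ is lower semicontinuous, and together with (a) it is continuous.
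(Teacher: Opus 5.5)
Your proof is correct and follows essentially the paper's route: for (a) both arguments rest on the continuity of $y\mapsto|\{t\in[a,y]:f(t)<\lambda\}|-s(y-a)$ (you propagate a strict inequality to a neighbourhood, while the paper passes the non-strict one to the limit along a sequence), and for (b) both isolate the case where this distribution function is flat at height $s(x-a)$ on an interval of values and rule it out using continuity of $f$ together with connectedness (equivalently, the intermediate value theorem). As a minor remark, in your degenerate case the conclusion $f\ge F(x)$ on $[a,x]$ already gives $|\{t\in[a,x]:f(t)<\mu\}|=0<s(x-a)$ for $\mu<F(x)$, so that case is in fact impossible and your final continuity-at-$x$ step is not needed.
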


\begin{proof}
	(a) We need to show that, for any $C\in\R$ and any sequence $(x_n)\subset(a,b]$ converging to a point $x\in(a,b]$, if $F(x_n) \ge C$ for all $n$, then $F(x) \ge C$. Let $\lambda < C$. Then, for all $n$, we have $F(x_n) > \lambda$ and hence
	\[
	|\{y \in [a,x_n] : f(y) < \lambda\}| \le s|[a,x_n]|.
	\]
	Letting $n \to \infty$, we get
	\[
	|\{y \in [a,x] : f(y) < \lambda\}| \le s|[a,x]|.
	\]
	This implies that $F(x) \ge \lambda$. It follows that $F(x) \ge C$, as desired.
	
	(b) By (a), it suffices to show that $F$ is lower semicontinuous. In other words, we need to show that, for any $C\in\R$ and any sequence $(x_n)\subset(a,b]$ converging to a point $x\in(a,b]$, if $F(x_n) \le C$ for all $n$, then $F(x) \le C$. Let $\mu > \lambda > C$. Then, for all $n$, we have $F(x_n) < \lambda$ and hence
	\[
	|\{y \in [a,x_n] : f(y) < \lambda\}| > s|[a,x_n]|.
	\]
	Letting $n \to \infty$, we get
	\begin{equation}
		\label{eq_median2a}
		|\{y \in [a,x] : f(y) < \lambda\}| \ge s|[a,x]|.
	\end{equation}
	For the sake of contradiction, suppose
	\begin{equation}
		\label{eq_median2b}
		|\{y \in [a,x] : f(y) < \mu\}| \le s|[a,x]|.
	\end{equation}
	Then, since the left-hand side of \eqref{eq_median2a} is less than or equal to the left-hand side of \eqref{eq_median2b}, we have equality in both \eqref{eq_median2a} and \eqref{eq_median2b}. This implies that
	\begin{equation}
		\label{eq_median2c}
		|\{y \in [a,x] : \lambda \le f(y) < \mu\}| = 0.
	\end{equation}
	On the other hand, \eqref{eq_median2a} and \eqref{eq_median2b} imply that the sets $\{y \in [a,x] : f(y) < \lambda\}$ and $\{y \in [a,x] : f(y) \ge \mu\}$ are nonempty. Fix $\nu$ such that $\lambda < \nu < \mu$. By the intermediate value theorem, $f$ takes the value $\nu$ at a point of $[a,x]$. By continuity, there is a nonempty open subset $U$ of $[a,x]$ such that $f(U) \subset (\lambda,\mu)$. This contradicts \eqref{eq_median2c}, so \eqref{eq_median2b} is false. Hence $F(x) \le \mu$. It follows that $F(x) \le C$, as desired.
\end{proof}

\begin{remark}
	\label{rmk_median}
	The function $F$ in Lemma \ref{lem_median3} is not continuous in general. For example, let $[a,b] = [0,3]$, let $s=1/2$, and let $f(x) = 0$ for $0 \le x \le 1$ and $f(x) = 1$ for $1 < x \le 3$. Then $F(x) = 0$ for $0 \le x < 2$ and $F(x) = 1$ for $2 \le x \le 3$, so $F$ is not continuous.
\end{remark}

The following alternative notion of median will also be useful for our purposes.
\begin{definition}
	\label{def_median2}
	Let $I \subset \R$ be an open interval, let $f : I \to \R$ be a measurable function, and let $0<s<1$. We define the \emph{minimal median of $f$ over $I$ with parameter $s$} by
	\[
	m_s(f,I) = \inf\{\lambda\in\R : |\{x \in I : f(x) > \lambda\}| \le (1-s)|I|\}.
	\]
\end{definition}

The minimal median is related to the maximal median via the following equality, which follows easily from the definitions:
\begin{equation}
	\label{eq_median3a}
	m_s(f,I) = -M_{1-s}(-f,I).
\end{equation}
By Lemma \ref{lem_median1}(g) and \eqref{eq_median3a}, for any $0<s<t<1$, we have
\begin{equation}
	\label{eq_median3b}
	m_s(f,I) \le M_s(f,I) \le m_t(f,I).
\end{equation}

The minimal median satisfies properties analogous to those satisfied by the maximal median. For example, the following properties are evident:
\begin{itemize}
	\item If $s<t$, then $m_s(f,I) \le m_t(f,I)$.
	\item If $f \le g$ a.e., then $m_s(f,I) \le m_s(g,I)$.
	\item For any $c \in \R$, we have $m_s(f+c,I) = m_s(f,I)+c$.
	\item For any $c>0$, we have $m_s(cf,I) = cm_s(f,I)$.
\end{itemize}
Properties (a)-(f) in Lemma \ref{lem_median1} are also satisfied by the minimal median, and the following analogue of property (g) holds: If $s<t$, then
\[
m_{1-t}(-f,I) \le -m_t(f,I) \le m_{1-s}(-f,I).
\]
The analogue of property (f) will be used frequently, so we state it explicitly: If $I = I_1 \sqcup I_2$, then
\begin{equation}
	\label{eq_median3c}
	\min(m_s(f,I_1),m_s(f,I_2)) \le m_s(f,I) \le \max(m_s(f,I_1),m_s(f,I_2)).
\end{equation}
We shall also need the following result, which is an immediate consequence of Lemma \ref{lem_median3}(a) and \eqref{eq_median3a}.
\begin{lemma}
	\label{lem_median4}
	Let $f : [a,b] \to \R$ be a measurable function and let $0<s<1$. For $x \in (a,b]$, let $F(x) = m_s(f,[a,x])$. Then $F$ is lower semicontinuous.
\end{lemma}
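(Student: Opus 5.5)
The approach is to reduce everything to Lemma \ref{lem_median3}(a) through the identity \eqref{eq_median3a}. For $x \in (a,b]$, that identity gives
\[
F(x) = m_s(f,[a,x]) = -M_{1-s}(-f,[a,x]).
\]
Define $G(x) = M_{1-s}(-f,[a,x])$. Since $-f$ is a measurable function on $[a,b]$ and $0<1-s<1$, Lemma \ref{lem_median3}(a) applies to $-f$ with parameter $1-s$ and shows that $G$ is upper semicontinuous on $(a,b]$. Since $F=-G$, and the negative of an upper semicontinuous function is lower semicontinuous, $F$ is lower semicontinuous.

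One small point needs checking. The medians in Lemma \ref{lem_median3} and in Definition \ref{def_median2} are taken over the closed intervals $[a,x]$, whereas Definitions \ref{def_median1} and \ref{def_median2} are stated for open intervals. I take \eqref{eq_median3a} to hold on $[a,x]$ for two reasons. First, the sets involved differ from those on $(a,x)$ only by null sets. Second, the identity follows directly from the definitions: $\{f>\lambda\} = \{-f<-\lambda\}$, and the threshold $(1-s)|I|$ is exactly the one in the definition of $M_{1-s}(-f,I)$, so the infimum over $\lambda$ becomes the negative of the supremum over $-\lambda$.

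As a sanity check, one can also argue directly in the style of the proof of Lemma \ref{lem_median3}(a). Suppose $x_n \to x$ and $F(x_n) \le C$. For $\lambda > C$ we have $|\{y\in[a,x_n]: f(y)>\lambda\}| \le (1-s)(x_n-a)$. Letting $n\to\infty$, continuity of Lebesgue measure gives the same inequality on $[a,x]$, so $F(x)\le\lambda$ and hence $F(x)\le C$. I expect no real obstacle here. The only care needed is to match the parameter correctly: the parameter $1-s$ lies in $(0,1)$, so Lemma \ref{lem_median3} applies to it.
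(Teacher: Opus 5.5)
Your proof is correct and is exactly the paper's argument: the paper states the lemma as an immediate consequence of Lemma \ref{lem_median3}(a) (applied to $-f$ with parameter $1-s$) together with the identity \eqref{eq_median3a}. Your direct sanity-check argument via continuity of $x\mapsto|\{y\in[a,x]: f(y)>\lambda\}|$ is also valid.
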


\section{Characterizations of \texorpdfstring{$A_p^+$}{Ap+} and \texorpdfstring{$\BMO^+$}{BMO+}}
\label{sec_ApBMO}

In this section, we prove Theorems \ref{thm_Ap} and \ref{thm_BMO}, which characterize $A_p^+$ and $\BMO^+$, respectively, using medians. We begin with the proof of Theorem \ref{thm_Ap}.

\begin{proof}[Proof of Theorem \ref{thm_Ap}]
	It is clear that (b) implies (c), so it suffices to prove that (a) implies (b) and that (c) implies (a). As a preliminary observation, note that, since $w>0$ a.e., we have $M_s(w,I)>0$ for all $0<s<1$ and $I\subset\R$ by Lemma \ref{lem_median1}(e).
	
	First, we assume (a) and prove (b). There exists $K>0$ such that, for all $I\subset\R$, we have
	\begin{equation}
		\label{eq_Ap3a}
		\left(\dashint_{I^-} w\right) \left(\dashint_{I^+} w^{-1/(p-1)}\right)^{p-1} \le K.
	\end{equation}
	Let $0 < \gamma \le \frac{1}{2}$, $0<s<1$ and $I\subset\R$ be given. We have $I^{\gamma,\pm} \subset I^\pm$ and $|I^{\gamma,\pm}| = 2\gamma |I^\pm|$, so
	\begin{equation}
		\label{eq_Ap3b}
		\left(\dashint_{I^{\gamma,-}} w\right) \left(\dashint_{I^{\gamma,+}} w^{-1/(p-1)}\right)^{p-1}
		\le \frac{1}{(2\gamma)^p} \left(\dashint_{I^-} w\right) \left(\dashint_{I^+} w^{-1/(p-1)}\right)^{p-1}.
	\end{equation}
	By Lemma \ref{lem_median2}, we have
	\begin{equation}
		\label{eq_Ap3c}
		M_s(w,I^{\gamma,+})^{-1/(p-1)} \lesssim_s \dashint_{I^{\gamma,+}} w^{-1/(p-1)}.
	\end{equation}
	By putting \eqref{eq_Ap3a}, \eqref{eq_Ap3b} and \eqref{eq_Ap3c} together, we get \eqref{eq_Ap2} for some constant $C$ depending only on $p$, $K$, $\gamma$ and $s$, as required.
	
	Now, we assume (c) and prove (a). By \cite[Theorem 1]{MPT}, it suffices to show that there exist $0<\alpha<1$ and $\beta>0$ such that, for every $\lambda>0$ and $(a,b) \subset \R$, if
	\begin{equation}
		\label{eq_Ap3d}
		\dashint_{(a,b)} w = \lambda \le \dashint_{(a,x)} w
	\end{equation}
	for all $x \in (a,b)$, then
	\begin{equation}
		\label{eq_Ap3e}
		|\{x \in (a,b) : w(x) > \beta \lambda\}| > \alpha(b-a).
	\end{equation}
	(According to the statement of \cite[Theorem 1]{MPT}, a stronger version of this condition, in which ``there exists $0<\alpha<1$'' is replaced with ``for every $0<\alpha<1$'', implies that $w \in A_p^+$ for some $1<p<\infty$. However, from the proof of \cite[Theorem 1]{MPT}, we see that the weaker version stated above is sufficient.)
	
	Choose $\alpha$ and $\beta$ such that $0<\alpha<1-s$ and $0<\beta<1/C$. Suppose $\lambda>0$ and $(a,b)\subset\R$ satisfy \eqref{eq_Ap3d} for all $x \in (a,b)$. For each $k \in \Z_{\ge0}$, let $I_k$ be the open interval with the same left endpoint as $(a,b)$ and $(1-\gamma)^k$ times the length. Note that $(a,b) = \bigsqcup_{k=0}^\infty I_k^{\gamma,+}$. For each $k$, we have $I_k^{\gamma,-} = (a,x_k)$ for some $x_k \in (a,b)$, so, by \eqref{eq_Ap3d} and \eqref{eq_Ap2}, we have $\lambda \le C M_s(w,I_k^{\gamma,+})$. Hence $\beta \lambda < M_s(w,I_k^{\gamma,+})$, so, by Lemma \ref{lem_median1}(b), we have
	\[
	|\{x \in I_k^{\gamma,+} : w(x) \le \beta \lambda\}|
	\le |\{x \in I_k^{\gamma,+} : w(x) < M_s(w,I_k^{\gamma,+})\}|
	\le s|I_k^{\gamma,+}|.
	\]
	Summing over $k$, we get
	\[
	|\{x \in (a,b) : w(x) \le \beta\lambda\}| \le s(b-a).
	\]
	Therefore, \eqref{eq_Ap3e} holds, as required.
\end{proof}

In the rest of this section, we prove a number of results culminating with Theorem \ref{thm_BMO}. The first of these is the following key lemma, which will be used in the proof of Lemma \ref{lem_JN}.

\begin{lemma}
	\label{lem_BMO}
	Let $f : \R \to \R$ be a measurable function. Suppose there exist $0<s<t<1$ and $C>0$ such that, for all $I\subset\R$, we have
	\begin{equation}
		\label{eq_BMO1a}
		M_t(f,I^-) - M_s(f,I^+) \le C.
	\end{equation}
	Then, for all $I\subset\R$, $n\in\Z_{\ge0}$ and $J \in \D_n(I^l) \cup \D_n(I^c)$, we have
	\begin{equation}
		\label{eq_BMO1b}
		M_t(f,J) - M_s(f,I^r) \le 2(n+1)C.
	\end{equation}
\end{lemma}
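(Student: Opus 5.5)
The plan is to argue by induction on $n$, proving \eqref{eq_BMO1b} for all $I$ simultaneously. The tools are the hypothesis \eqref{eq_BMO1a}, applied to well-chosen intervals whose halves are consecutive pieces of the thirds of $I$, together with the monotonicity $M_s\le M_t$ and the splitting property of Lemma \ref{lem_median1}(f). Since Lemma \ref{lem_median1}(f) holds for every parameter, for $K=K^-\sqcup K^+$ it gives
\[
M_t(f,K)\ge \min\bigl(M_t(f,K^-),M_t(f,K^+)\bigr).
\]

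\emph{Base case $n=0$.} For $J=I^c$, apply \eqref{eq_BMO1a} to the interval $I^c\cup I^r$, whose left and right halves are $I^c$ and $I^r$. This gives $M_t(I^c)-M_s(I^r)\le C$. For $J=I^l$, apply \eqref{eq_BMO1a} to $I^l\cup I^c$ to get $M_t(I^l)\le C+M_s(I^c)$. Then $M_s(I^c)\le M_t(I^c)\le C+M_s(I^r)$, so $M_t(I^l)-M_s(I^r)\le 2C$. This is where the factor $2$ in $2(n+1)C$ comes from: each generation may cost up to two applications of \eqref{eq_BMO1a}.

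\emph{Inductive step for a left child.} Let $J\in\D_{n+1}$ have parent $P\in\D_n(I^l)\cup\D_n(I^c)$, and assume $M_t(P)-M_s(I^r)\le 2(n+1)C$. Suppose first that $J=P^-$. By \eqref{eq_BMO1a} applied to $P$, we have $M_t(P^-)\le C+M_s(P^+)$. By the splitting inequality above, either $M_t(P^-)\le M_t(P)$, and we are done, or $M_t(P^+)\le M_t(P)$. In the latter case,
\[
M_t(P^-)\le C+M_s(P^+)\le C+M_t(P^+)\le C+M_t(P).
\]
Either way $M_t(J)-M_s(I^r)\le 2(n+1)C+C\le 2(n+2)C$.

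\emph{Inductive step for a right child.} This case, $J=P^+$, is the step I expect to be the main obstacle. If $M_t(P^+)\le M_t(P)$ we are done as before. Otherwise only $M_t(P^-)\le M_t(P)$ is known, which gives no control of $P^+$ from the left. So I would instead look to the right of $J$. Apply \eqref{eq_BMO1a} to the interval $K$ with $K^-=J$ and $K^+=J'$, where $J'$ is the interval of length $|J|$ immediately to the right of $J$. This gives $M_t(J)\le C+M_s(J')\le C+M_t(J')$.

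Since $J\subset I^l\cup I^c$ and $|J|\le |I|/3$, the interval $J'$ lies in $I^l\cup I^c\cup I^r$. Then:
\begin{itemize}
\item If $J'\in\D_{n+1}(I^l)\cup\D_{n+1}(I^c)$, then $J'$ is a left child $P'^-$ of the neighbouring parent $P'$. I would bound it by the left-child argument above, taking care that the total budget of $2C$ per generation is not exceeded.
\item If $J'$ enters $I^r$ (or crosses from $I^l$ into $I^c$), I would compare $M_s(J')$ with $M_s(I^r)$ or $M_s(I^c)$. For this I would use the splitting property on the dyadic chain from $J'$ up to $I^r$ (resp.\ $I^c$), in the same ``either the left piece or the right piece is controlled'' manner, and route the estimate through the base case.
\end{itemize}

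If this bookkeeping does not close within $2(n+2)C$, I would strengthen the induction hypothesis. The idea is to prove simultaneously a bound of the form $M_s(J)-M_s(I^r)\le(2n+1)C$ for right children, so that the cheaper quantity $M_s(J')$ carries through the rightward comparison. The remaining control would then be handled by the monotonicity $M_s\le M_t$ at the final step.
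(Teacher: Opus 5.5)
Your plan is essentially the paper's proof. Both use the same base case, the one-step bounds $M_t(L^-)\le M_t(L)+C$ and $M_s(L^-)\le M_s(L)+C$ obtained from the splitting property, and, for a right child $J=P^+$, the hypothesis applied to $J\cup J'$; after that, $J'$ is either the left child of the next generation-$n$ interval $P'$ or the leftmost interval of $\D_{n+1}(I^r)$. It never straddles $I^l$ and $I^c$.

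The bookkeeping closes without any strengthened hypothesis. In the first case the cost is $C+C$ on top of the induction hypothesis for $P'$, giving exactly $2(n+2)C$. In the second case it is $C+(n+1)C$, using the $s$-version of the one-step bound $n+1$ times up the chain of left children to $I^r$.
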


\begin{proof}
	First, we show that, for all $I\subset\R$, we have
	\begin{align}
		\label{eq_BMO1c} M_s(I^-) - M_s(I) &\le C, \\
		\label{eq_BMO1d} M_t(I^-) - M_t(I) &\le C.
	\end{align}
	By Lemma \ref{lem_median1}(f), we have
	\[
	M_s(I) \ge \min(M_s(I^-), M_s(I^+)), \qquad
	M_t(I) \ge \min(M_t(I^-), M_t(I^+)).
	\]
	Hence
	\begin{align*}
		M_s(I^-) - M_s(I)
		&\le \max(0, M_s(I^-) - M_s(I^+))
		\le \max(0, M_t(I^-) - M_s(I^+))
		\le C, \\
		M_t(I^-) - M_t(I)
		&\le \max(0, M_t(I^-) - M_t(I^+))
		\le \max(0, M_t(I^-) - M_s(I^+))
		\le C.
	\end{align*}
	Thus, \eqref{eq_BMO1c} and \eqref{eq_BMO1d} hold.
	
	Now, we prove \eqref{eq_BMO1b} by induction. Note that
	\begin{align*}
		M_t(I^c) - M_s(I^r)
		&\le C, \\
		M_t(I^l) - M_s(I^r)
		&\le (M_t(I^l) - M_s(I^c)) + (M_t(I^c) - M_s(I^r))
		\le 2C.
	\end{align*}
	Thus, \eqref{eq_BMO1b} holds for $n=0$. Suppose \eqref{eq_BMO1b} holds for some $n \ge 0$. Let $K \in \D_{n+1}(I^l) \cup \D_{n+1}(I^c)$ be given. If $K = J^-$ for some $J \in \D_n(I^l) \cup \D_n(I^c)$, then
	\[
	M_t(K)
	= M_t(J^-)
	\le M_t(J) + C
	\le M_s(I^r) + (2(n+1)+1)C,
	\]
	where the first inequality follows from \eqref{eq_BMO1d} and the second inequality follows from the induction hypothesis. Otherwise, $K = J^+$ for some $J \in \D_n(I^l) \cup \D_n(I^c)$. First, suppose $J$ is not the rightmost interval in $\D_n(I^l) \cup \D_n(I^c)$. Then $J = H^-$ for some interval $H$ such that $H^+ \in \D_n(I^l) \cup \D_n(I^c)$, so
	\begin{align*}
		M_t(K)
		&= M_t((H^-)^+)
		\le M_s((H^+)^-) + C
		\le M_s(H^+) + 2C \\
		&\le M_t(H^+) + 2C
		\le M_s(I^r) + 2(n+2)C,
	\end{align*}
	where the first inequality follows from \eqref{eq_BMO1a}, the second inequality follows from \eqref{eq_BMO1c}, and the fourth inequality follows from the induction hypothesis. Now, suppose $J$ is the rightmost interval in $\D_n(I^l) \cup \D_n(I^c)$. Then $J=H^-$ for some interval $H$ such that $H^+$ is the leftmost interval in $\D_n(I^r)$, so
	\[
	M_t(K)
	= M_t((H^-)^+)
	\le M_s((H^+)^-) + C
	\le M_s(I^r) + (n+2)C,
	\]
	where the first inequality follows from \eqref{eq_BMO1a} and the second inequality follows from applying \eqref{eq_BMO1c} $n+1$ times.
	In all three cases, we have $M_t(K) - M_s(I^r) \le 2(n+2)C$, as required.
\end{proof}

The following technical lemma will also be used in the proof of Lemma \ref{lem_JN}.

\begin{lemma}
	\label{lem_median5}
	Let $f : \R \to \R$ be a measurable function. Let $0<s<1$ and $C \in \R$. For each $x \in \R$, we say that property $P(x)$ holds if there exists $y<x$ such that
	\[
	m_s(f,[y,x]) > C.
	\]
	Suppose $P(x)$ holds for all $x \in (a,b]$, but $P(a)$ does not hold. Then, for all $x \in (a,b]$, we have $M_s(f,[a,x]) \ge C$.
\end{lemma}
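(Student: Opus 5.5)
The plan is to reduce the conclusion to a measure estimate for the sublevel set $\{f\le C\}$, and then run a continuity (infimum) argument from the right endpoint $x$ back towards $a$, gluing together intervals supplied by property $P$.

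\emph{Step 1 (reduction).} Fix $x\in(a,b]$. By Definition~\ref{def_median1}, $M_s(f,[a,x])\ge C$ holds as soon as $|\{y\in[a,x]: f(y)<\lambda\}|\le s(x-a)$ for every $\lambda<C$. It therefore suffices to show
\[
|\{y\in[a,x] : f(y)\le C\}|\le s(x-a).
\]
Call a point $w\in[a,x)$ \emph{good} if $|\{y\in[w,x]: f(y)\le C\}|\le s(x-w)$. The goal is to show that $a$ is good.

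\emph{Step 2 (basic facts).} We need three facts about good points and $m_s$.

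\begin{enumerate}[label=\upshape(\roman*)]
\item If $m_s(f,J)>C$, then $|\{f>C\}\cap J|>(1-s)|J|$, directly from Definition~\ref{def_median2}. Hence $|\{f\le C\}\cap J|<s|J|$.
\item If $m_s(f,J)\le C$, then $|\{f>\lambda\}\cap J|\le(1-s)|J|$ for every $\lambda>C$. Letting $\lambda\downarrow C$ gives $|\{f\le C\}\cap J|\ge s|J|$.
\item Goodness is preserved under concatenation: if $w$ is good and $J=[y,w]$ satisfies $m_s(f,J)>C$ with $y\ge a$, then $y$ is good, by adding the estimate from (i) to that for $[w,x]$. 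Goodness is also preserved under decreasing limits, by continuity of Lebesgue measure, since the quantities involved vary continuously in $w$.
\end{enumerate}

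\emph{Step 3 (infimum argument).} By $P(x)$ there is $y<x$ with $m_s(f,[y,x])>C$.

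If $y\ge a$, then $y$ is good by (i). Let $z$ be the infimum of the good points, and apply the same reasoning at $z$. The point $z$ is good by the limit property in (iii). Suppose $z>a$. Then $P(z)$ gives $y'<z$ with $m_s(f,[y',z])>C$.
\begin{itemize}
\item If $y'\ge a$, then $y'$ is good by (iii). This contradicts the minimality of $z$.
\item If $y'<a$, we use that $P(a)$ fails, so $m_s(f,[y',a])\le C$. Fact (ii) gives $|\{f\le C\}\cap[y',a]|\ge s(a-y')$. Fact (i) gives $|\{f\le C\}\cap[y',z]|<s(z-y')$. Subtracting yields $|\{f\le C\}\cap[a,z]|<s(z-a)$. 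Together with the goodness of $z$, this shows that $a$ is good, again contradicting $z>a$ unless we are already done.
\end{itemize}
Hence $z=a$, and $a$ is good.

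The initial case, where the $y$ obtained from $P(x)$ already satisfies $y<a$, is handled by the same subtraction with $z$ replaced by $x$.

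\emph{Main obstacle.} The delicate point is the passage across $a$. There we only know $m_s(f,[y',a])\le C$, which is a non-strict bound, and we must turn it into a lower bound for $\{f\le C\}$ rather than for $\{f<C\}$. This is where the limit $\lambda\downarrow C$ in (ii) is essential. It is also why we work with the set $\{f\le C\}$ in Step 1 instead of the sets $\{f<\lambda\}$, and why the conclusion is the non-strict inequality $M_s\ge C$.
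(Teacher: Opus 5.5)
Your proof is correct and follows the same route as the paper's. You split according to whether the witness for $P(x)$ lies to the left of $a$. Otherwise you take the infimum of admissible left endpoints and apply $P$ there, which either contradicts minimality or crosses $a$ using the failure of $P(a)$. The only real difference is bookkeeping. Because you track $|\{f\le C\}\cap[w,x]|\le s(x-w)$ rather than $M_s(f,[w,x])\ge C$, the condition is continuous in $w$, so attaining the infimum needs no appeal to the semicontinuity Lemma~\ref{lem_median3}(a). The concatenation steps via Lemma~\ref{lem_median1}(f) and \eqref{eq_median3c} become additivity of Lebesgue measure. You also obtain the slightly stronger bound $|\{f\le C\}\cap[a,x]|\le s(x-a)$.
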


\begin{proof}
	If $m_s(f,[y,x]) > C$ for some $y<a$, then $m_s(f,[y,a]) \le C$ (since $P(a)$ does not hold), so $m_s(f,[a,x]) > C$ by \eqref{eq_median3c} and hence $M_s(f,[a,x]) > C$. Now, suppose
	\begin{equation}
		\label{eq_median4a}
		m_s(f,[y,x]) \le C \qquad \text{for all} ~ y<a.
	\end{equation}
	Let
	\[
	\beta = \inf \{y \in [a,x) : M_s(f,[y,x]) \ge C\}.
	\]
	Since $P(x)$ and \eqref{eq_median4a} hold, we have $m_s(f,[y_0,x]) > C$ for some $y_0 \in [a,x)$, so $\beta \le y_0$. Thus, $a \le \beta < x$. For each $n\in\Z_{>0}$, there exists $y_n \in [\beta, \beta+1/n)$ such that $y_n \in [a,x)$ and $M_s(f,[y_n,x]) \ge C$. By Lemma \ref{lem_median3}(a), this implies that
	\begin{equation}
		\label{eq_median4b}
		M_s(f,[\beta,x]) \ge C.
	\end{equation}
	If $\beta = a$, we are done. Assume $\beta > a$. Then $a < \beta < b$, so $P(\beta)$ holds, i.e.\ there exists $y < \beta$ such that
	\begin{equation}
		\label{eq_median4c}
		m_s(f,[y,\beta]) > C.
	\end{equation}
	By \eqref{eq_median4b}, \eqref{eq_median4c} and Lemma \ref{lem_median1}(f), we have $M_s(f,[y,x]) \ge C$, so, by the minimality of $\beta$, we must have $y < a$. Since $P(a)$ does not hold, we have
	\begin{equation}
		\label{eq_median4d}
		m_s(f,[y,a]) \le C.
	\end{equation}
	Then, by \eqref{eq_median4c}, \eqref{eq_median4d} and \eqref{eq_median3c}, we have
	\begin{equation}
		\label{eq_median4e}
		m_s(f,[a,\beta]) > C.
	\end{equation}
	Finally, by \eqref{eq_median4b}, \eqref{eq_median4e} and Lemma \ref{lem_median1}(f), we have $M_s(f,[a,x]) \ge C$, as desired.
\end{proof}

Next, we prove a lemma which contains the main construction that will be iterated in order to prove Theorem \ref{thm_JN}.

\begin{lemma}
	\label{lem_JN}
	Let $f : \R \to \R$ be measurable and let $0<s<\sigma<t<1$. Suppose $m,n \in \Z_{>0}$ satisfy $m(t-\sigma) > 1$ and $m2^{-n} < t-\sigma$. Suppose there exists $C>0$ such that, for all $I\subset\R$, we have
	\[
	M_t(f,I^-) - M_s(f,I^+) \le C.
	\]
	Then, for every $I\subset\R$, there exist countably many disjoint open intervals $I_i = (a_i,b_i) \subset I^l \cup I^c$ such that, for $g = (f-m_\sigma(f,I^r))^+ \chi_{I^l \cup I^c}$, we have
	\begin{enumerate}[label=\upshape(\alph*)]
		\item $m_\sigma(g,(x,b_i)) \le 4(n+1)C \le M_\sigma(g,(a_i,x))$ for all $i$ and $x \in I_i$;
		\item $m_\sigma(g,I_i) \le 4(n+1)C \le M_\sigma(g,I_i)$ for all $i$;
		\item $f(x)-m_\sigma(f,I^r) \le 4(n+1)C$ for a.e.\ $x \in I^l \setminus \bigsqcup_i I_i$;
		\item $\sum_i |I_i| \le \frac{1-t}{1-\sigma} (1+m2^{-n}) |I^l|$.
	\end{enumerate}
\end{lemma}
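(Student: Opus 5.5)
The plan is a one-sided Calderón–Zygmund type selection, in the spirit of Martín-Reyes–de la Torre, built on Lemma \ref{lem_median5}, the semicontinuity Lemmas \ref{lem_median3} and \ref{lem_median4}, and the dyadic control of Lemma \ref{lem_BMO}. Set $\lambda = 4(n+1)C$ and $g$ as in the statement. The intervals $I_i$ will be the connected components of an open set $\Omega\subset I^l\cup I^c$ on which a one-sided median maximal quantity of $g$ exceeds $\lambda$.

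\textbf{Step 1: the selection.} Define $P(x)$ as in Lemma \ref{lem_median5}, with $g$ in place of $f$, $\sigma$ in place of $s$, and $\lambda$ in place of $C$:
\[
\text{$P(x)$ holds} \iff \text{there is } y<x \text{ with } m_\sigma(g,[y,x])>\lambda .
\]
Since $g=0$ off $I^l\cup I^c$, $P$ fails to the left of $I$. I would take $\Omega$ to be the set of $x\in I^l\cup I^c$ where $P$ holds, suitably truncated at the right end of $I^c$. By lower semicontinuity of $y\mapsto m_\sigma(g,[y,x])$ (Lemma \ref{lem_median4}, applied in the reflected direction), the selecting intervals can be perturbed, so I expect $\Omega$ to be open. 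Write its components as $I_i=(a_i,b_i)$. Then $P(a_i)$ fails and $P$ holds on $(a_i,x]$, so Lemma \ref{lem_median5} gives the right half of (a): $M_\sigma(g,(a_i,x))\ge\lambda$. For the left half, $m_\sigma(g,(x,b_i))\le\lambda$: if this failed, then combining with the failure of $P$ at $b_i$ (or at points just to its right) and using \eqref{eq_median3c} should force $P$ to hold beyond $b_i$, contradicting maximality of the component. Part (b) then follows from (a) by letting $x\to b_i$ in the right inequality and $x\to a_i$ in the left, using the semicontinuity lemmas.

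\textbf{Step 2: part (c).} For a.e.\ $x\in I^l\setminus\Omega$, the property $P(x)$ fails, so $m_\sigma(g,[y,x])\le\lambda$ for every $y<x$. Letting $y\uparrow x$ and applying Proposition \ref{prop_median} (together with \eqref{eq_median3b}) gives $g(x)\le\lambda$ at Lebesgue-type points. This is (c).

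\textbf{Step 3: part (d), the main obstacle.} On each $I_i$ we have $M_\sigma(g,I_i)\ge\lambda$, so $g\ge\lambda$ on a set of measure at least $(1-\sigma)|I_i|$. Summing,
\[
(1-\sigma)\sum_i|I_i|\le|\{x\in I^l\cup I^c: f(x)-m_\sigma(f,I^r)\ge\lambda\}|.
\]
Here we are measuring against $I^l$, and the $I_i$ might straddle into $I^c$. I would cover $I^l$ by the $2^n$ intervals $J\in\D_n(I^l)$. Lemma \ref{lem_BMO} gives
\[
M_t(f,J)\le M_s(f,I^r)+2(n+1)C\le m_\sigma(f,I^r)+2(n+1)C,
\]
where the second inequality uses \eqref{eq_median3b} and $s<\sigma$. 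Since $\lambda>2(n+1)C$, Lemma \ref{lem_median1}(c) shows that $f-m_\sigma(f,I^r)\ge\lambda$ holds on at most a $(1-t)$-fraction of each $J$; the same bound holds for the dyadic pieces of $I^c$.

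The delicate part is the bookkeeping at the boundary. Intervals $I_i$ meeting $I^c$ or only partially covering a dyadic $J$ contribute an error. The condition $m2^{-n}<t-\sigma$ controls the roughly $m$ dyadic cells of length $2^{-n}|I^l|$ that can be cut at such boundaries, producing the factor $1+m2^{-n}$. The condition $m(t-\sigma)>1$ shows that a component cannot run more than about $m$ cells beyond $I^l$ into $I^c$: by (b), a component has only a $(1-\sigma)$ high-fraction, while each fully covered cell has at most a $(1-t)$ high-fraction, and the accumulated gap $m(t-\sigma)$ exceeds $1$. I expect to spend most of the effort making this counting rigorous, so as to land exactly on $\frac{1-t}{1-\sigma}(1+m2^{-n})|I^l|$.
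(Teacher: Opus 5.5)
Your set-up for (a)--(c) matches the paper's. You use the same set $A=\{x:\sup_{y<x}m_\sigma(g,(y,x))>\lambda\}$, Lemma~\ref{lem_median5} for $M_\sigma(g,(a_i,x))\ge\lambda$, semicontinuity for (b), and Proposition~\ref{prop_median} with \eqref{eq_median3b} for (c).

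\textbf{The gap is in (d).} Your first inequality $(1-\sigma)\sum_i|I_i|\le|\{x\in\bigcup_i I_i: f(x)-m_\sigma(f,I^r)\ge\lambda\}|$ is useless until you know where the $I_i$ lie. Bounded over all of $I^l\cup I^c$, it only gives the factor $2\frac{1-t}{1-\sigma}$, which need not be less than $1$. The containment you need is exactly what you defer as ``counting to be made rigorous'': every component of $A$ meeting $I^l$ lies in $I^l\cup K$, where $K$ is the union of the $m$ leftmost intervals of $\D_n(I^c)$.

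The paper proves this by showing that no component $(a_i,b_i)$ contains $m$ intervals of $\D_n(I^l)\cup\D_n(I^c)$. Suppose one did. Let $K'$ be the union of the $m$ leftmost such intervals, and let $\tilde K=(a_i,x)$ with $x$ the right end of $K'$. Use the \emph{initial-segment} bound $M_\sigma(g,\tilde K)\ge\lambda$ from (a) (or (b) if $x=b_i$). Combine it with $M_t(f,K')<m_\sigma(f,I^r)+\lambda$, which follows from Lemma~\ref{lem_BMO}, Lemma~\ref{lem_median1}(f) and \eqref{eq_median3b}. This gives
\[
(1-\sigma)|K'|\le(1-\sigma)|\tilde K|\le|\tilde K\setminus K'|+(1-t)|K'|<2^{-n}|I^l|+(1-t)|K'|,
\]
that is, $m(t-\sigma)<1$, a contradiction.

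Your sketch instead uses the $(1-\sigma)$-fraction of the whole component, as in (b). That picks up a partial cell at \emph{both} ends, so for the number $N$ of full cells you only get $N(t-\sigma)\le 2$. Under $m(t-\sigma)>1$ this still lets components run about $2m$ cells into $I^c$, and you would not reach the stated constant. A component meeting $I^l$ and leaving $I^l\cup K$ would contain all $m$ cells of $K$, so the claim is exactly what gives the containment.

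Once you have the containment, no boundary bookkeeping is needed:
\[
(1-\sigma)\sum_i|I_i|\le|\{x\in I^l\cup K:f(x)>M_t(f,I^l\cup K)\}|\le(1-t)|I^l\cup K|.
\]
The factor $1+m2^{-n}$ is simply $|I^l\cup K|/|I^l|$. So your reading of $m2^{-n}<t-\sigma$ as controlling ``cells cut at boundaries'' is off. In this lemma it only ensures $m<2^n$, so that $K\subset I^c$. The comparison with $t-\sigma$ is used later, to get $\beta<1$ in Theorem~\ref{thm_JN}.

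The same claim also settles your vague truncation. If the component of $A$ straddling $c_I$ met $I^l$, it would contain all $2^n>m$ cells of $\D_n(I^c)$, so it does not meet $I^l$. You should therefore take genuine components of $A$, not pieces truncated at $c_I$. A truncated piece ending at $c_I\in A$ can violate the left half of (a). For a genuine component, $b_i\notin A$ gives $m_\sigma(g,(x,b_i))\le\lambda$ immediately by taking $y=x$, with no need for \eqref{eq_median3c}.

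Finally, openness of $A$ needs lower semicontinuity in the \emph{right} endpoint, $z\mapsto m_\sigma(g,(y,z))$, which is Lemma~\ref{lem_median4} as stated. The reflected version is what you need for the left inequality in (b).
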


\begin{proof}
	Write $I^l = (a_I,b_I)$, $I^c = (b_I,c_I)$ and $I^r = (c_I,d_I)$. Note that $g : \R \to [0,\infty)$ is a measurable function which vanishes outside $(a_I,c_I)$. Let $\gamma = 4(n+1)C$ and
	\[
	A = \{x \in \R : \sup_{y<x} m_\sigma(g,(y,x)) > \gamma\}.
	\]
	In other words, let $A$ be the set of all $x \in \R$ for which there exists $y < x$ such that $m_\sigma(g,(y,x)) > \gamma$. We claim that $A$ has the following properties:
	\begin{enumerate}[label=\upshape(\Roman*)]
		\item $A \subset (a_I, \beta)$ for some $\beta \in (c_I, \infty)$.
		\item $A$ is an open set.
		\item If $x \in A \cap (c_I, \infty)$, then $[c_I, x] \subset A$.
	\end{enumerate}
	
	 Proof of (I): For all $x \le a_I$, we have $m_\sigma(g,(y,x)) = 0$ for all $y<x$ since $g$ is identically zero on $(y,x)$, so $x \notin A$. For all sufficiently large $x > c_I$, we have $m_\sigma(g,(y,x)) = 0$ for all $y<x$ since $g$ vanishes on more than $\sigma$ of $(y,x)$, so $x \notin A$. Thus, (I) holds.
	 
	 Proof of (II): Let $x \in A$. Then there exists $y<x$ such that $m_\sigma(g,(y,x)) > \gamma$. By Lemma \ref{lem_median4}, the function $z \mapsto m_\sigma(g,(y,z))$ is lower semicontinuous on $(y,\infty)$, so there exists $\epsilon>0$ such that $m_\sigma(g,(y,z)) > \gamma$ for all $z \in (x-\epsilon,x+\epsilon)$. Thus, $(x-\epsilon,x+\epsilon) \subset A$, so (II) holds.
	 
	 Proof of (III): Suppose $x \in A \cap (c_I,\infty)$. Then there exists $y<x$ such that $m_\sigma(g,(y,x)) > \gamma$. Since $m_\sigma(g,(z,x)) = 0$ for all $z \in [c_I,x)$, we must have $y < c_I$. It follows by \eqref{eq_median3c} that $m_\sigma(g,(y,z)) > \gamma$ for all $z \in [c_I,x)$. Thus, $[c_I,x] \subset A$, so (III) holds.
	
	From (I)-(III), we deduce that $A$ is the disjoint union of countably many open intervals, each of which is contained in $(a_I,c_I)$, with the possible exception of a single open interval $(a,b)$ with endpoints $a \in [a_I,c_I)$ and $b \in (c_I,\infty)$.
	
	Let $J = (a,b)$ be one of the connected components of $A$. Then $a \in [a_I,c_I)$ and $a,b \notin A$. Observe the following:
	
	\begin{enumerate}[label=\upshape(\roman*)]
		\item $M_\sigma(g,(a,x)) \ge \gamma$ for all $x \in J$. (This follows from Lemma \ref{lem_median5}.)
		\item $M_\sigma(g,(a,b)) \ge \gamma$. (This follows from (i) and Lemma \ref{lem_median3}(a).)
		\item $m_\sigma(g,(x,b)) \le \gamma$ for all $x \in J$.
		\item $m_\sigma(g,(a,b)) \le \gamma$. (This follows from (iii) and Lemma \ref{lem_median4}.)
	\end{enumerate}
	
	We claim that $J$ contains fewer than $m$ intervals of $\D_n(I^l) \cup \D_n(I^c)$. Suppose $J$ contains at least $m$ intervals of $\D_n(I^l) \cup \D_n(I^c)$. Let $K$ be the union of the $m$ leftmost intervals of $\D_n(I^l) \cup \D_n(I^c)$ that are contained in $J$. Let $\tilde{K}$ be the interval with the same left endpoint as $J$ and the same right endpoint as $K$. Then $K \subset \tilde{K}$ and $|\tilde{K} \setminus K| < 2^{-n}|I^l|$. Moreover, $\tilde{K} = (a,x)$ for some $x \in (a,b]$ and $\tilde{K} \subset (a_I,c_I)$. By Lemma \ref{lem_BMO}, for every $H \in \D_n(I^l) \cup \D_n(I^c)$, we have
	\begin{equation}
		\label{eq_JN1}
		M_t(f,H) - M_s(f,I^r) \le 2(n+1)C.
	\end{equation}
	By Lemma \ref{lem_median1}(f), this implies that
	\[
	M_t(f,K) - M_s(f,I^r) \le 2(n+1)C.
	\]
	By \eqref{eq_median3b}, it follows that
	\begin{equation}
		\label{eq_JN2}
		M_t(f,K) - m_\sigma(f,I^r) < \gamma.
	\end{equation}
	We estimate
	\begin{align*}
		(1-\sigma)|K|
		&\le (1-\sigma)|\tilde{K}| \\
		&\le |\{x \in \tilde{K} : g(x) \ge M_\sigma(g,\tilde{K})\}| \\
		&\le |\{x \in \tilde{K} : g(x) \ge \gamma\}| \\
		&= |\{x \in \tilde{K} : f(x)-m_\sigma(f,I^r) \ge \gamma\}| \\
		&\le |\{x \in \tilde{K} : f(x) > M_t(f,K)\}| \\
		&\le |\tilde{K} \setminus K| + |\{x \in K : f(x) > M_t(f,K)\}| \\
		&\le 2^{-n}|I^l| + (1-t)|K|.
	\end{align*}
	Here we used Lemma \ref{lem_median1}(b), observation (i) or (ii), the definition of $g$, inequality \eqref{eq_JN2}, and Lemma \ref{lem_median1}(c). Hence $(t-\sigma)|K| \le 2^{-n}|I^l|$. Since $|K| = m2^{-n}|I^l|$, this contradicts our assumption that $m(t-\sigma) > 1$. Thus, our claim has been proved.
	
	Now, let $\{I_i\}$ be the collection of connected components of $A$ which intersect $I^l$. Let $K$ be the union of the $m$ leftmost intervals in $\D_n(I^c)$. (This definition makes sense since $m < 2^n$ by assumption.) For each $i$, we have $I_i \subset I^l \cup K$ by the preceding paragraph. In particular, $I_i \subset I^l \cup I^c$.
	
	It remains to check that the collection $\{I_i\}$ satisfies properties (a)-(d) in the statement of the lemma. Properties (a) and (b) follow immediately from observations (i)-(iv). To prove (c), note that, for any $x \in \R \setminus A$, we have $m_\sigma(g,(y,x)) \le \gamma$ for all $y<x$. By \eqref{eq_median3b}, this implies that $M_s(g,(y,x)) \le \gamma$ for all $y<x$. By Proposition \ref{prop_median}, we have $M_s(g,(y,x)) \to g(x)$ as $y \uparrow x$ for a.e.\ $x \in \R$, so $g(x) \le \gamma$ for a.e.\ $x \in \R \setminus A$. Recalling the definition of $g$ and of the collection $\{I_i\}$, we see that (c) holds. Finally, to prove (d), note that, by \eqref{eq_JN1} and Lemma \ref{lem_median1}(f), we have
	\[
	M_t(f,I^l \cup K) - M_s(f,I^r) \le 2(n+1)C.
	\]
	By \eqref{eq_median3b}, this implies that
	\begin{equation}
		\label{eq_JN3}
		M_t(f,I^l \cup K) - m_\sigma(f,I^r) < \gamma.
	\end{equation}
	For each $i$, we estimate
	\begin{align*}
		(1-\sigma)|I_i|
		&\le |\{x \in I_i : g(x) \ge M_\sigma(g,I_i)\}| \\
		&\le |\{x \in I_i : g(x) \ge \gamma\}| \\
		&= |\{x \in I_i : f(x) - m_\sigma(f,I^r) \ge \gamma\}| \\
		&\le |\{x \in I_i : f(x) > M_t(f,I^l \cup K)\}|.
	\end{align*}
	Here we used Lemma \ref{lem_median1}(b), observation (ii), the definition of $g$, and inequality \eqref{eq_JN3}. By Lemma \ref{lem_median1}(c), it follows that
	\begin{align*}
		(1-\sigma) \sum_i |I_i|
		&\le |\{x \in I^l \cup K : f(x) > M_t(f,I^l \cup K)\}| \\
		&\le (1-t) |I^l \cup K| \\
		&= (1-t)(1+m2^{-n})|I^l|.
	\end{align*}
	Thus, (d) holds.
\end{proof}

Now, we are ready to prove a John-Nirenberg inequality for functions whose one-sided median differences are bounded.

\begin{theorem}
	\label{thm_JN}
	Let $f : \R \to \R$ be measurable and let $0<s<\sigma<t<1$. Suppose there exists $C>0$ such that, for all $I\subset\R$, we have
	\[
	M_t(f,I^-) - M_s(f,I^+) \le C.
	\]
	Then there exist $K,\alpha>0$ (depending only on $\sigma$ and $t$) such that, for all $I\subset\R$ and $\lambda>0$, we have
	\[
	|\{x \in I^l : f(x) - M_\sigma(f,I^r) > \lambda\}|
	\le K |I^l| \exp(-\alpha \lambda /C).
	\]
\end{theorem}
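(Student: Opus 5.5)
We iterate Lemma \ref{lem_JN}, in the spirit of the Calderón--Zygmund proof of John--Nirenberg, with median differences in place of averages. Fix $0<s<\sigma<t<1$. Choose $m,n\in\Z_{>0}$ with $m(t-\sigma)>1$ and $m2^{-n}<t-\sigma$. Then set $\rho=\frac{1-t}{1-\sigma}(1+m2^{-n})$. Since $(1-t)(1+m2^{-n})<(1-t)(1+t-\sigma)<1-\sigma$, we have $\rho<1$. The constants $m,n,\rho$ depend only on $\sigma$ and $t$, and we write $\gamma=4(n+1)C$. Since $M_\sigma(f,I^r)\ge m_\sigma(f,I^r)$, it suffices to prove the estimate with $m_\sigma(f,I^r)$ in place of $M_\sigma(f,I^r)$. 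The plan is to show by induction on $k\ge1$ that
\[
|\{x\in I^l : f(x)-m_\sigma(f,I^r)>k\gamma'\}|\le \rho^{k-1}|I^l|,
\]
where $\gamma'$ is a fixed multiple of $\gamma$. Interpolating between integers then gives the bound with $\alpha=\log(1/\rho)/(\gamma'/C)$ and $K=\rho^{-2}$, say. Both constants depend only on $\sigma$ and $t$, since $\gamma'/C$ is a function of $n$.

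For the inductive step we apply Lemma \ref{lem_JN} to $I$. This yields disjoint intervals $I_i=(a_i,b_i)$ with $\sum_i|I_i|\le\rho|I^l|$, and for a.e.\ $x\in I^l\setminus\bigsqcup_iI_i$ we have $f(x)-m_\sigma(f,I^r)\le\gamma$. So only the part of the exceptional set lying in $\bigsqcup_i I_i$ remains. For each $I_i$ we want to realize $I_i$ as the left third $\tilde I_i^l$ of a new interval $\tilde I_i=(a_i,a_i+3|I_i|)$, and then apply the induction hypothesis in $\tilde I_i$. For this we need to compare $m_\sigma(f,\tilde I_i^r)$ with $m_\sigma(f,I^r)$, i.e.\ to show $m_\sigma(f,\tilde I_i^r)\le m_\sigma(f,I^r)+c\gamma$ for some absolute $c$. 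The available information is Lemma \ref{lem_JN}(a),(b): $m_\sigma(g,(x,b_i))\le\gamma$ for all $x\in I_i$, which says the tails of $I_i$ are controlled in minimal median, and $M_\sigma(g,I_i)\ge\gamma$.

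The main obstacle is that $\tilde I_i^r$ lies to the right of $I_i$, possibly outside $I^l\cup I^c$, where $g$ carries no information. So we cannot use $\tilde I_i^r$ directly. Instead we will set things up so that the ``reference interval'' at each step can be taken to be a right piece of $I_i$ itself. Concretely, we reprove Lemma \ref{lem_JN} in a slightly more general form, where $m_\sigma(f,I^r)$ is replaced by $m_\sigma(f,(x,b_i))$ for suitable $x$, and use (a) to choose the split point. Alternatively, we apply Lemma \ref{lem_BMO} to a configuration in which $I_i$ plays the role of $I^l\cup I^c$ and a subinterval near $b_i$ plays the role of $I^r$. Property (a) gives $m_\sigma(f,(x,b_i))\le m_\sigma(f,I^r)+\gamma$ whenever $g>0$ on a $(1-\sigma)$-portion of $(x,b_i)$, and otherwise the median is already below $m_\sigma(f,I^r)$. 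This yields the bound on the new reference median with an additive loss of $\gamma$, so each generation costs at most $2\gamma$. We therefore take $\gamma'=2\gamma$, or a slightly larger multiple if the reference intervals must be rescaled.

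Granting this comparison, the induction closes. Summing $\rho^{k-1}|I_i|$ over $i$ and using $\sum_i|I_i|\le\rho|I^l|$ produces the factor $\rho^k$, and the inductive estimate together with the interpolation gives the stated exponential bound.
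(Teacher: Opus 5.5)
Your architecture is the right one: iterate Lemma \ref{lem_JN}, transfer the reference median using property (a), and get geometric decay from (d). Your comparison is also correct. Since $f-m_\sigma(f,I^r)\le g$ on $(x,b_i)\subset I^l\cup I^c$, one has $m_\sigma(f,(x,b_i))\le m_\sigma(f,I^r)+m_\sigma(g,(x,b_i))\le m_\sigma(f,I^r)+\gamma$ for every $x\in I_i$.

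The gap is in the sentence ``granting this comparison, the induction closes.'' Your induction hypothesis, like Lemma \ref{lem_JN}, only controls the left third $J^l$ of an interval $J$, measured against the median over its own right third $J^r$. A single $J$ with $J^r=(x,b_i)$ controls only one piece of $I_i$. It leaves $J^c$ and the reference piece $J^r$ itself uncontrolled, so choosing one ``split point'' cannot work. Your alternative, letting $I_i$ play the role of $I^l\cup I^c$ while a subinterval near $b_i$ plays $I^r$, is geometrically impossible, because $I^r$ must lie to the right of $I^l\cup I^c$. A generalized Lemma \ref{lem_JN} with unequal pieces would need a new version of Lemma \ref{lem_BMO}, which you do not supply.

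The missing idea is to cover $I_i$ by left thirds whose right thirds are right tails of $I_i$. For $j\ge0$, let $I_{ij}$ be the interval with right endpoint $b_i$ and length $(2/3)^j|I_i|$. Then $I_i=\bigsqcup_j I_{ij}^l$ up to a null set, and $I_{ij}^r=(x,b_i)$ for some $x\in I_i$, so $m_\sigma(f,I_{ij}^r)\le m_\sigma(f,I^r)+\gamma$. The induction hypothesis applied to $I_{ij}$ gives $|\{x\in I_{ij}^l : f(x)-m_\sigma(f,I^r)>(k+1)\gamma\}|\le\rho^{k-1}|I_{ij}^l|$. Summing over $j$ costs nothing, because $\sum_j|I_{ij}^l|=|I_i|$. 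Summing over $i$ with (d) gives $\rho^k|I^l|$, and (c) disposes of $I^l\setminus\bigsqcup_i I_i$.

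This is exactly the paper's construction. With it, your induction on $k$ over all intervals simultaneously closes with $\gamma'=\gamma$, and it is a tidy repackaging of the paper's ``continue this process indefinitely'' step. Without it, the proof is incomplete.
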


\begin{proof}
	Choose $M \in \Z_{>0}$ large enough that $M(t-\sigma) > 1$, and choose $N \in \Z_{>0}$ large enough that $M2^{-N} < t-\sigma$. Let $\beta = \frac{1-t}{1-\sigma}(1+M2^{-N})$ and $\gamma = 4(N+1)C$. Note that
	\[
	\beta < \frac{1-t}{1-\sigma}(1+t-\sigma) = 1-t\left(1-\frac{1-t}{1-\sigma}\right) < 1.
	\]
	Thus, $0<\beta<1$ and $\gamma>0$.
	
	Let $I\subset\R$ be given. By Lemma \ref{lem_JN}, there exist countably many disjoint open intervals $I_i = (a_i,b_i) \subset I^l \cup I^c$ such that, for $g = (f-m_\sigma(f,I^r))^+ \chi_{I^l \cup I^c}$, we have
	\begin{enumerate}[label=\upshape(\alph*)]
		\item $m_\sigma(g,(x,b_i)) \le \gamma \le M_\sigma(g,(a_i,x))$ for all $i$ and $x \in I_i$;
		\item $m_\sigma(g,I_i) \le \gamma \le M_\sigma(g,I_i)$ for all $i$;
		\item $f(x)-m_\sigma(f,I^r) \le \gamma$ for a.e.\ $x \in I^l \setminus \bigsqcup_i I_i$;
		\item $\sum_i |I_i| \le \beta |I^l|$.
	\end{enumerate}
	For each $i$ and each $j \in \Z_{\ge0}$, let $I_{ij}$ be the open interval with the same right endpoint as $I_i$ and $(2/3)^j$ times the length. Note that $I_i = \bigsqcup_{j=0}^\infty I_{ij}^l$. By Lemma \ref{lem_JN}, for each $i$ and $j$, there exist countably many disjoint open intervals $I_{ijk} = (a_{ijk},b_{ijk}) \subset I_{ij}^l \cup I_{ij}^c$ such that, for $g_{ij} = (f-m_\sigma(f,I_{ij}^r))^+ \chi_{I_{ij}^l \cup I_{ij}^c}$, we have
	\begin{enumerate}[label=\upshape(\alph*')]
		\item $m_\sigma(g_{ij},(x,b_{ijk})) \le \gamma \le M_\sigma(g_{ij},(a_{ijk},x))$ for all $k$ and $x \in I_{ijk}$;
		\item $m_\sigma(g_{ij},I_{ijk}) \le \gamma \le M_\sigma(g_{ij},I_{ijk})$ for all $k$;
		\item $f(x)-m_\sigma(f,I_{ij}^r) \le \gamma$ for a.e.\ $x \in I_{ij}^l \setminus \bigsqcup_k I_{ijk}$;
		\item $\sum_k |I_{ijk}| \le \beta |I_{ij}^l|$.
	\end{enumerate}
	
	Now, $\{I_{ijk}\}_{i,j,k}$ is a countable collection of (not necessarily disjoint) open subintervals of $I$. For any $i$ and $j$, we have
	\begin{align*}
		m_\sigma(f,I_{ij}^r) - m_\sigma(f,I^r)
		&= m_\sigma(f-m_\sigma(f,I^r), I_{ij}^r) \\
		&\le m_\sigma((f-m_\sigma(f,I^r))^+, I_{ij}^r) \\
		&= m_\sigma(g,I_{ij}^r) \\
		&\le \gamma,
	\end{align*}
	where the second equality holds since $I_{ij}^r \subset I^l \cup I^c$, and the second inequality holds since $I_{ij}^r = (x,b_i)$ for some $x \in I_i$. Together with (c'), this implies that
	\begin{equation}
		\label{eq_JN4}
		f(x) - m_\sigma(f,I^r) \le 2\gamma
	\end{equation}
	for a.e.\ $x \in I_{ij}^l \setminus \bigsqcup_k I_{ijk}$. Since $I_i = \bigsqcup_j I_{ij}^l$, it follows that \eqref{eq_JN4} holds for a.e.\ $x \in \bigsqcup_i I_i \setminus \bigcup_{i,j,k} I_{ijk}$. Together with (c), this implies that \eqref{eq_JN4} holds for a.e.\ $x \in I^l \setminus \bigcup_{i,j,k} I_{ijk}$. Also note that
	\[
	\Bigl|\bigcup_{i,j,k} I_{ijk}\Bigr|
	\le \sum_{i,j,k} |I_{ijk}|
	\le \beta \sum_{i,j} |I_{ij}^l|
	= \beta \sum_i |I_i|
	\le \beta^2 |I^l|,
	\]
	where the second inequality follows from (d'), the equality holds since $I_i = \bigsqcup_j I_{ij}^l$, and the third inequality follows from (d).
	
	By continuing this process indefinitely, we obtain for each $n \in \Z_{>0}$ a countable collection $\{J_{ni}\}_i$ of (not necessarily disjoint) open subintervals of $I$ such that
	\begin{enumerate}[label=\upshape(\roman*)]
		\item $f(x) - m_\sigma(f,I^r) \le n\gamma$ for a.e.\ $x \in I^l \setminus \bigcup_i J_{ni}$;
		\item $|\bigcup_i J_{ni}| \le \beta^n |I^l|$.
	\end{enumerate}
	By \eqref{eq_median3b}, we may replace $m_\sigma(f,I^r)$ with $M_\sigma(f,I^r)$ in (i).
	
	Let $\lambda>0$ be given. Then there is a unique $n \in \Z_{\ge0}$ such that
	\[
	n\gamma < \lambda \le (n+1)\gamma.
	\]
	If $n \ge 1$, then
	\begin{align*}
		|\{x \in I^l : f(x) - M_\sigma(f,I^r) > \lambda\}|
		&\le |\{x \in I^l : f(x) - M_\sigma(f,I^r) > n\gamma\}| \\
		&\le \Bigl|\bigcup_i J_{ni}\Bigr|
		\le \beta^n |I^l|
		\le |I^l| \exp\left(\frac{\log\beta}{8(N+1)}\cdot\frac{\lambda}{C}\right),
	\end{align*}
	where the last inequality holds since $\lambda \le 2n\gamma = 8n(N+1)C$. If $n = 0$, then
	\[
	|\{x \in I^l : f(x) - M_\sigma(f,I^r) > \lambda\}|
	\le |I^l|
	\le \frac{1}{\beta} |I^l| \exp\left(\frac{\log\beta}{8(N+1)}\cdot\frac{\lambda}{C}\right),
	\]
	where the last inequality holds since $\lambda \le 2\gamma = 8(N+1)C$. Therefore, we may take $K = 1/\beta$ and $\alpha = -(\log\beta)/(8(N+1))$.
\end{proof}

Finally, we are ready to prove Theorem \ref{thm_BMO}.

\begin{proof}[Proof of Theorem \ref{thm_BMO}]
	It is clear that (b) implies (c), so it suffices to prove that (a) implies (b) and that (c) implies (a).
	
	First, we assume (a) and prove (b). Let $0<s<t<1$ be given. Fix $1<p<\infty$. By Proposition \ref{prop_ApBMO}(h), there exists $\beta>0$ such that $\exp(\beta f) \in A_p^+$. By Theorem \ref{thm_Ap}, there exists $K>0$ such that, for all $I\subset\R$,
	\[
	\dashint_{I^-} \exp(\beta f) \le K M_s(\exp(\beta f),I^+).
	\]
	Given $I\subset\R$, we estimate
	\begin{align*}
		\exp(\beta(M_t(f,I^-) - M_s(f,I^+)))
		&= \frac{\exp(\beta M_t(f,I^-))}{\exp(\beta M_s(f,I^+))}
		= \frac{M_t(\exp(\beta f),I^-)}{M_s(\exp(\beta f),I^+)} \\
		&\lesssim_t \frac{\dashint_{I^-} \exp(\beta f)}{M_s(\exp(\beta f),I^+)}
		\le K,
	\end{align*}
	where the second equality follows from Lemma \ref{lem_median1}(h) and the first inequality follows from Lemma \ref{lem_median2}. It follows that \eqref{eq_BMO2} holds with a constant $C$ depending only on $t$, $\beta$ and $K$, as required.
	
	Now, we assume (c) and prove (a). Fix $\sigma$ such that $s<\sigma<t$. Let $K$ and $\alpha$ be as given by Theorem \ref{thm_JN}. Fix $\beta$ such that $0 < \beta < \alpha/C$. Given $I\subset\R$, we estimate
	\begin{align*}
		\int_{I^l} \exp(\beta (f-M_\sigma(f,I^r)))
		&= \int_0^\infty |\{x \in I^l : \exp(\beta(f(x)-M_\sigma(f,I^r))) > \lambda\}| \, d\lambda \\
		&\le |I^l| + \int_1^\infty |\{x \in I^l : f(x) - M_\sigma(f,I^r) > (\log \lambda)/\beta\}| \, d\lambda \\
		&\le |I^l| + K|I^l| \underbrace{\int_1^\infty \lambda^{-\alpha/(\beta C)} \, d\lambda}_{\eqcolon\,\gamma}.
	\end{align*}
	Note that $\gamma < \infty$ by our choice of $\beta$. Let $L = 1 + K\gamma$. Then
	\[
	\left(\dashint_{I^l} \exp(\beta f)\right) \exp(-\beta M_\sigma(f,I^r))
	= \dashint_{I^l} \exp(\beta(f-M_\sigma(f,I^r)))
	\le L.
	\]
	By Lemma \ref{lem_median1}(h), we have
	\[
	\exp(-\beta M_\sigma(f,I^r))
	= \exp(-M_\sigma(\beta f, I^r))
	= \exp(M_\sigma(\beta f, I^r))^{-1}
	= M_\sigma(\exp(\beta f), I^r)^{-1}.
	\]
	Hence
	\[
	\dashint_{I^l} \exp(\beta f) \le L \cdot M_\sigma(\exp(\beta f), I^r).
	\]
	In particular, $\exp(\beta f)$ is a weight. Note that $L$ and $\beta$ depend only on $\sigma$, $t$ and $C$. Thus, by Theorem \ref{thm_Ap}, we have $\exp(\beta f) \in A_p^+$ for some $1<p<\infty$. It follows by Proposition \ref{prop_ApBMO}(h) that $f \in \BMO^+$, as desired.
\end{proof}

\section{\texorpdfstring{$A_p^+$}{Ap+} is equivalent to right median porosity}
\label{sec_main}

In this section, we prove Theorem \ref{thm_main}, which characterizes the sets $E$ such that the weight $d_E^{\,-\alpha}$ belongs to $A_p^+$ for some $\alpha>0$ and $1<p<\infty$. We begin with the following lemma, which is partly based on some of the ideas in \cite{PU}.

\begin{lemma}
	\label{lem_dist1}
	Let $\emptyset \ne E \subset \R$, $0<s<t<1$, $c>0$, and $I\subset\R$.
	\begin{enumerate}[label=\upshape(\alph*)]
		\item If $I \cap E \ne \emptyset$, then $M_s(d_E,I) \le L_s(E,I) \lesssim_{s,t} M_t(d_E,I)$.
		\item If $I \cap E = \emptyset$ and $d(I,E) \le c|I|$, then $L_s(E,I) \approx_{s,c} M_s(d_E,I)$.
		\item If $|\bar{E}|=0$ and $d(I,E) \le c|I|$, then, for any $p\in\R$, we have $L_s(E,I)^p \lesssim_{s,p,c} \dashint_I d_E^{\,p}$.
		\item If $I^- \cap E = I^+ \cap E = \emptyset$, then $M_s(d_E,I^-) \approx_{s,t} M_t(d_E,I^+)$.
	\end{enumerate}
\end{lemma}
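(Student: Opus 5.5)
By Lemma \ref{lem_length1}(a), and since $d_E=d_{\bar E}$, we may assume $E$ is closed throughout. Each part then reduces to comparing the ``gap structure'' of $I\setminus E$ with the level sets of $d_E$.

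\emph{Part (a).} We first prove $M_s(d_E,I)\le L_s(E,I)$. Put $\lambda>L_s(E,I)$ and $U=\{x\in I: d_E(x)\ge\lambda\}$. Every point of $U$ lies in a component of $I\setminus E$ of length at least $2\lambda$, or at least $\lambda$ if the component touches $\partial I$. Since $I\cap E\neq\emptyset$, no component is all of $I$. If $|\{x\in I : d_E(x)<\lambda\}|\le s|I|$, the components of length $\ge\lambda$ would have total length $\ge(1-s)|I|$. This would give $L_s\ge\lambda$, a contradiction; so $M_s\le\lambda$.

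For $L_s(E,I)\lesssim_{s,t}M_t(d_E,I)$, let $\ell<L_s$ be attained by disjoint $E$-free intervals $I_i$ with $|I_i|\ge\ell$ and $\sum|I_i|\ge(1-s)|I|$. On each $I_i$ the set $\{d_E<\epsilon\ell\}$ has measure at most $2\epsilon\ell\le 2\epsilon|I_i|$, because $d_E\ge d_{\partial I_i}$ on $I_i$ (the endpoints of $I_i$ may be in $E$ or outside). Hence
\[
|\{x\in I: d_E(x)\ge\epsilon\ell\}|\ge(1-2\epsilon)(1-s)|I|.
\]
Choosing $\epsilon$ with $(1-2\epsilon)(1-s)>1-t$ gives $|\{d_E<\epsilon\ell\}|<t|I|$, so $M_t\ge\epsilon\ell$.

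\emph{Part (b).} Here $I$ is $E$-free, so $L_s(E,I)=|I|$. Also $d(I,E)\le d_E\le d(I,E)+|I|\le(c+1)|I|$ on $I$, so $M_s(d_E,I)\le(c+1)|I|$ by Lemma \ref{lem_median1}(d). Conversely $d_E\ge d_{\partial I}$ on $I$, so $\{d_E<\epsilon|I|\}$ has measure $\le2\epsilon|I|$. Taking $2\epsilon<1-s$ gives $M_s\ge\epsilon|I|$.

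\emph{Part (c).} Split into cases. If $I\cap E=\emptyset$, part (b) combined with Lemma \ref{lem_median2} gives $L_s^p\approx M_s(d_E,I)^p\lesssim\dashint_I d_E^{\,p}$ for any $p$.

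If $I\cap E\ne\emptyset$ and $p\ge0$, use part (a) with some $t\in(s,1)$ and Lemma \ref{lem_median2}: $L_s^p\lesssim M_t^p\lesssim\dashint d_E^{\,p}$.

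If $I\cap E\neq\emptyset$ and $p<0$, we need $L_s^p\lesssim\dashint d_E^{\,p}$, i.e.\ a lower bound on a negative moment. I expect this to be the main obstacle, since the medians go the wrong way here. The plan is to use $M_s\le L_s$ from (a), which gives $L_s^p\le M_s(d_E,I)^p$, and then Lemma \ref{lem_median2} with $p<0$, which gives $M_s^p\lesssim_s\dashint_I d_E^{\,p}$. Here $|\bar E|=0$ ensures $d_E>0$ a.e., so Lemma \ref{lem_median2} applies and $M_s>0$. So this case is in fact easy.

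The hypothesis $d(I,E)\le c|I|$ is only needed in the $E$-free case, via (b).

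\emph{Part (d).} Both halves are $E$-free, but the midpoint may belong to $E$, so $d(I^\pm,E)$ is at most about $|I^\mp|+d(I,E)$ and could be large relative to $|I|$. So we compare directly. Set $D=d(I,E)$. On $I^\pm$ we have $d_E\approx D+\operatorname{dist}(x,\text{the nearest }E\text{-point})$, and the nearest $E$-point is outside $I^-\cup I^+$ except possibly the midpoint.

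If $D\ge|I|$, then $d_E\approx D$ on all of $I$, so both medians are $\approx D$.

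If $D<|I|$, then $d_E\le 2|I|$ on $I$, so both medians are $\lesssim|I|$. For the lower bounds, $d_E\ge d_{\partial I^\pm}$ on each half, so the argument of (b) gives $M_s(d_E,I^-)\gtrsim_s|I|$ and $M_t(d_E,I^+)\gtrsim_t|I|$.

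In all cases both medians are comparable to $\max(D,|I|)$, which yields $M_s(d_E,I^-)\approx_{s,t}M_t(d_E,I^+)$.
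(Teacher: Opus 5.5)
Your proof is correct in substance and follows the paper's argument closely. Part (a) is the same component-counting argument; the paper uses the central subintervals $cJ$ of long components, while you bound the measure of $\{d_E<\epsilon\ell\}$ near $\partial I_i$, which amounts to the same thing. Part (c) is exactly the paper's case split: $M_s\le L_s$ for $p<0$ and $L_s\lesssim M_t$ for $p\ge 0$, followed by Lemma \ref{lem_median2}.

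There is one slip to fix. In (b), to conclude $M_s(d_E,I)\ge\epsilon|I|$ you need $|\{x\in I: d_E(x)<\epsilon|I|\}|\le s|I|$. That means you need $2\epsilon\le s$, not $2\epsilon<1-s$. For example, with $E=\{0\}$, $I=(0,1)$ and $s=\frac{1}{10}$ we have $M_s(d_E,I)=\frac{1}{10}$, yet your condition would allow $\epsilon=\frac{2}{5}$. The same correction applies where you reuse this argument in (d): take $2\epsilon\le s$ on $I^-$ and $2\epsilon\le t$ on $I^+$.

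In (d) you split on whether $d(I,E)\ge|I|$ and show both medians are comparable to $\max(d(I,E),|I|)$. This is a harmless variant of the paper's argument. The paper instead uses the direct estimates $M_s(d_E,I^-)\ge\frac{s}{2}|I^-|+d(I^-,E)$ and $M_t(d_E,I^+)\le 2|I^-|+d(I^-,E)$, which avoid the case split and give the explicit ratio bound $4/s$.
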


\begin{proof}
	By Lemma \ref{lem_length1}(a) and the fact that $d_E = d_{\bar{E}}$, we may assume $E$ is closed. For any $\delta>0$, let $S_\delta(E,I)$ be the collection of connected components $J$ of $I \setminus E$ such that $|J| \ge \delta$.
	
	(a) First, we prove the left-hand inequality. Since $I \cap E \ne \emptyset$, we have $L_s(E,I) < |I|$. Let $L_s(E,I) < \delta < |I|$. If $x \in I$ and $d_E(x) \ge \delta$, then $x \in J$ for some $J \in S_\delta(E,I)$. (Indeed, $x \notin E$, so $x \in J$ for some component $J$ of $I \setminus E$. If $|J| < \delta$, then $|J|<|I|$, so at least one endpoint of $J$ belongs to $E$ and hence $d_E(x) < |J| < \delta$, which is a contradiction.) Thus,
	\[
	|\{x \in I : d_E(x) \ge \delta\}|
	\le \sum_{J \in S_\delta(E,I)} |J|
	< (1-s)|I|.
	\]
	Hence $|\{x \in I : d_E(x) < \delta\}| > s|I|$, so $M_s(d_E,I) \le \delta$. Thus, $M_s(d_E,I) \le L_s(E,I)$.
	
	Now, we prove the right-hand inequality. Let $c = \frac{1-t}{1-s}$. Assume $L_s(E,I) > 0$ and let $0 < \delta < L_s(E,I)$. If $J \in S_\delta(E,I)$ and $x \in cJ$, then $d_E(x) \ge \frac{1-c}{2}|J| \ge \frac{1-c}{2}\delta$. Thus,
	\[
	|\{x \in I : d_E(x) \ge \tfrac{1-c}{2}\delta\}|
	\ge \sum_{J \in S_\delta(E,I)} |cJ|
	= c \sum_{J \in S_\delta(E,I)} |J|
	\ge c(1-s)|I|
	= (1-t)|I|.
	\]
	Hence $|\{x \in I : d_E(x) < \frac{1-c}{2}\delta\}| \le t|I|$, so $M_t(d_E,I) \ge \frac{1-c}{2}\delta$. Thus, $M_t(d_E,I) \ge \frac{1-c}{2}L_s(E,I)$.
	
	(b) Since $I \cap E = \emptyset$, we have $L_s(E,I) = |I|$. For any $x \in I$, we have $d_E(x) \le |I| + d(I,E) \le (c+1)|I|$. By Lemma \ref{lem_median1}(d), $M_s(d_E,I) \le (c+1)|I|$. On the other hand, if $x \in (1-s)I$, then $d_E(x) \ge \frac{s}{2}|I|$. Thus, $|\{x \in I : d_E(x) < \frac{s}{2}|I|\}| \le s|I|$, so $M_s(d_E,I) \ge \frac{s}{2} |I|$.
	
	(c) Since $|\bar{E}| = 0$, we have $d_E > 0$ a.e. By Lemma \ref{lem_median2}, it suffices to show that $L_s(E,I)^p \lesssim_{s,p,c} M_\sigma(d_E,I)^p$ for some $\sigma=\sigma(s) \in (0,1)$. If $I \cap E = \emptyset$, this follows immediately from (b), so we assume $I \cap E \ne \emptyset$. Taking $t = \frac{s+1}{2}$ in (a), we get
	\[
	M_s(d_E, I) \le L_s(E,I) \lesssim_s M_{(s+1)/2}(d_E,I).
	\]
	If $p\ge0$, the right-hand inequality gives $L_s(E,I)^p \lesssim_{s,p} M_{(s+1)/2}(d_E,I)^p$. If $p\le0$, the left-hand inequality gives $L_s(E,I)^p \le M_s(d_E,I)^p$. In either case, the desired inequality holds.
	
	(d) For any $x \in I^+$, we have $d_E(x) \le |I^+| + d(I^+,E)$. Since $|d(I^-,E) - d(I^+,E)| \le \frac{1}{2}|I|$, this implies that $d_E(x) \le 2|I^-| + d(I^-,E)$. By Lemma \ref{lem_median1}(d), $M_t(d_E,I^+) \le 2|I^-| + d(I^-,E)$. On the other hand, if $x \in (1-s)I^-$, then $d_E(x) \ge \frac{s}{2}|I^-| + d(I^-,E)$. Thus,
	\[
	|\{x \in I^- : d_E(x) < \tfrac{s}{2}|I^-| + d(I^-,E)\}| \le s|I^-|,
	\]
	so $M_s(d_E,I^-) \ge \frac{s}{2}|I^-| + d(I^-,E)$. It follows that
	\[
	\frac{M_t(d_E,I^+)}{M_s(d_E,I^-)}
	\le \frac{2|I^-| + d(I^-,E)}{\frac{s}{2}|I^-| + d(I^-,E)}
	\le \frac{4}{s}.
	\]
	Similarly, $M_s(d_E,I^-)/M_t(d_E,I^+) \le 4/t$.
\end{proof}

The following two lemmas contain the two key implications in Theorem \ref{thm_main}.

\begin{lemma}
	\label{lem_main1}
	Let $\emptyset \ne E \subset \R$. Suppose there exist $\alpha>0$ and $1<p<\infty$ such that $d_E^{\,-\alpha} \in A_p^+$. Then $E$ is $(s,t)$-right median porous for all $0<s<t<1$.
\end{lemma}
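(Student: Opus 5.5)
The plan is to check the two conditions in Lemma \ref{lem_porous1}(b): that $|\bar E|=0$, and that $L_s(E,I^-)\ge\delta L_t(E,I^+)$ for all $I\subset\R$ with $\delta=\delta(s,t,\alpha,p,[w]_{A_p^+})$. Throughout, $w=d_E^{\,-\alpha}$.

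\emph{Closure has measure zero.} Since $w$ is a weight, it is finite a.e. Hence $d_E>0$ a.e., so $\bar E=\{d_E=0\}$ is null.

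\emph{Trivial cases.} Fix $0<s<t<1$ and $I\subset\R$. If $I^-\cap E=\emptyset$, then $L_s(E,I^-)=|I^-|=|I^+|\ge L_t(E,I^+)$, and we are done with $\delta=1$. So we may assume $I^-\cap E\neq\emptyset$. In particular $d(I^-,E)=0$ and $d(I^+,E)\le |I^+|$.

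\emph{Main chain of inequalities.} Fix auxiliary parameters $t<t'<\tau'<1$ and set $\sigma=1-\tau'\in(0,1)$.
\begin{enumerate}[label=\upshape(\arabic*)]
\item Since $|\bar E|=0$ and $d(I^-,E)=0$, Lemma \ref{lem_dist1}(c) with exponent $p=-\alpha$ gives
\[
L_s(E,I^-)^{-\alpha}\lesssim \dashint_{I^-} w .
\]
\item By the implication (a)$\Rightarrow$(b) of Theorem \ref{thm_Ap} with $\gamma=\tfrac12$ and parameter $\sigma$,
\[
\dashint_{I^-}w\le C\,M_\sigma(w,I^+).
\]
\item Apply Lemma \ref{lem_median1}(i) to $f=d_E$ on $I^+$, with the strictly decreasing continuous map $g(x)=x^{-\alpha}$ on $(0,\infty)$ and parameters $t'<\tau'$. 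This is legitimate because $d_E>0$ a.e. The left inequality there gives
\[
M_{1-\tau'}(w,I^+)\le M_{t'}(d_E,I^+)^{-\alpha},
\qquad\text{that is,}\qquad
M_\sigma(w,I^+)\le M_{t'}(d_E,I^+)^{-\alpha}.
\]
\item Bound $M_{t'}(d_E,I^+)$ below by $L_t(E,I^+)$, in two subcases.
\begin{itemize}
\item If $I^+\cap E\ne\emptyset$, Lemma \ref{lem_dist1}(a) with the pair $t<t'$ gives $L_t(E,I^+)\lesssim_{t,t'}M_{t'}(d_E,I^+)$.
\item If $I^+\cap E=\emptyset$, then $d(I^+,E)\le|I^+|$. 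Lemma \ref{lem_dist1}(b) (with $c=1$ and parameter $t'$), together with monotonicity of $L$ in the parameter, gives $L_t(E,I^+)\le |I^+|=L_{t'}(E,I^+)\approx M_{t'}(d_E,I^+)$.
\end{itemize}
Either way, $M_{t'}(d_E,I^+)^{-\alpha}\lesssim L_t(E,I^+)^{-\alpha}$.
\end{enumerate}
Chaining (1)–(4) yields $L_s(E,I^-)^{-\alpha}\lesssim L_t(E,I^+)^{-\alpha}$. Taking the $(-1/\alpha)$-th power gives $L_s(E,I^-)\ge \delta L_t(E,I^+)$.

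\emph{Main obstacle.} The delicate point is step (3), where the median parameters must be bookkept correctly. Passing through the decreasing map $x\mapsto x^{-\alpha}$ flips the parameter from $\sigma$ to roughly $1-\sigma$ and loses an arbitrarily small margin. This is why I fix $\sigma=1-\tau'$ with $\tau'>t'>t$ in advance, and why it matters that Theorem \ref{thm_Ap}(b) holds for \emph{every} $s\in(0,1)$. The remaining steps are direct applications of Lemma \ref{lem_dist1}.
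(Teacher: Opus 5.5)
Your argument is correct, but it takes a different route from the paper.

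\textbf{The paper's route.} The paper argues by hand. It splits $I^-\setminus E$ and $I^+\setminus E$ into connected components. On the short components of $I^-$ it uses the pointwise bound $d_E<\delta L_t(E,I^+)$, which gives the Chebyshev-type estimate $|I^-|^{-1}\sum_{\text{short}}(b_i-a_i)\le\delta^\alpha L_t(E,I^+)^\alpha\dashint_{I^-}d_E^{\,-\alpha}$. On $I^+$ it combines H\"older's inequality with a count of the long components to get $L_t(E,I^+)^\alpha\lesssim(1-t)^{-(\alpha+p-1)}\bigl(\dashint_{I^+}d_E^{\,\alpha/(p-1)}\bigr)^{p-1}$. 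Multiplying these and invoking $[d_E^{\,-\alpha}]_{A_p^+}$ makes the short part at most $s|I^-|$ once $\delta$ is small.

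\textbf{Your route.} You go through the median machinery instead:
\begin{itemize}
\item Lemma \ref{lem_dist1}(c) on $I^-$;
\item the easy implication of Theorem \ref{thm_Ap};
\item Lemma \ref{lem_median1}(i), to pass the median through $x\mapsto x^{-\alpha}$;
\item Lemma \ref{lem_dist1}(a),(b) on $I^+$.
\end{itemize}
This is essentially the argument of Lemma \ref{lem_main2} run in reverse, which nicely exposes the symmetry between the two implications of Theorem \ref{thm_main}. Your parameter bookkeeping in step (3) is right.

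\textbf{What each approach buys.} Your version needs the case split on $I^-\cap E$, because Lemma \ref{lem_dist1}(b),(c) requires the hypothesis $d(I,E)\le c|I|$. The paper's component argument never needs that hypothesis. It is self-contained, using only the definition of $A_p^+$, and it gives a transparent dependence of $\delta$ on $s,t,\alpha,p$ and the $A_p^+$ constant.

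\textbf{A possible shortcut.} Your detour through Theorem \ref{thm_Ap} and medians on $I^+$ can be avoided. When $I^-\cap E\ne\emptyset$, apply Lemma \ref{lem_dist1}(c) on $I^+$ with exponent $\alpha/(p-1)$. This gives $L_t(E,I^+)^\alpha\lesssim\bigl(\dashint_{I^+}d_E^{\,\alpha/(p-1)}\bigr)^{p-1}$ directly. Multiplying by your step (1) then closes the argument with the $A_p^+$ constant. This is exactly how the paper handles the forward direction of Theorem \ref{thm_exp2}.
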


\begin{proof}
	By Lemma \ref{lem_porous1}(a) and the fact that $d_E = d_{\bar{E}}$, we may assume $E$ is closed. Since $d_E^{\,-\alpha}$ is a weight, we have $d_E^{\,-\alpha} < \infty$ a.e., so $d_E > 0$ a.e.\ and hence $|E|=0$. Let $0<s<t<1$ be given. By Lemma \ref{lem_porous1}(b), it suffices to show that there exists $0<\delta<1$ such that $L_s(I^-) \ge \delta L_t(I^+)$ for all $I\subset\R$.
	
	Let $0<\delta<1$ be a constant whose precise value will be determined later. Given $I\subset\R$, we may write $I^- \setminus E = \bigsqcup_{i \in \Gamma} (a_i,b_i)$ and $I^+ \setminus E = \bigsqcup_{j \in \Delta} (c_j,d_j)$, where $\Gamma$ and $\Delta$ are countable sets. Then $\sum_{i \in \Gamma} (b_i-a_i) = |I^-|$ and $\sum_{j \in \Delta} (d_j-c_j) = |I^+|$. Let
	\[
	\Gamma_0 = \{i \in \Gamma : b_i - a_i \ge \delta L_t(I^+)\}
	\qquad \text{and} \qquad
	\Delta_0 = \{j \in \Delta : d_j - c_j \ge L_t(I^+)\}.
	\]
	By Lemma \ref{lem_length1}(b), we have $L_t(I^+) > 0$, so $\Gamma_0$ and $\Delta_0$ are finite sets. It suffices to show that $\sum_{i \in \Gamma_0} (b_i-a_i) \ge (1-s)|I^-|$ or, equivalently, that $\sum_{i \in \Gamma \setminus \Gamma_0} (b_i-a_i) \le s|I^-|$.
	
	Let $i \in \Gamma \setminus \Gamma_0$ and $x \in (a_i,b_i)$. Then $b_i-a_i < \delta L_t(I^+) \le \delta |I^+| < |I^+| = |I^-|$, so either $a_i \in E$ or $b_i \in E$. Hence
	\[
	0 < \min(x-a_i,b_i-x) \le d_E(x) \le \max(x-a_i,b_i-x) < b_i-a_i < \delta L_t(I^+).
	\]
	This implies that $1 \le \delta^\alpha L_t(I^+)^\alpha d_E(x)^{-\alpha}$, so
	\begin{equation}
		\label{eq_main1}
		\frac{1}{|I^-|} \sum_{i \in \Gamma \setminus \Gamma_0} (b_i-a_i)
		\le \frac{\delta^\alpha L_t(I^+)^\alpha}{|I^-|} \sum_{i \in \Gamma \setminus \Gamma_0} \int_{a_i}^{b_i} d_E^{\,-\alpha}
		\le \delta^\alpha L_t(I^+)^\alpha \dashint_{I^-} d_E^{\,-\alpha}.
	\end{equation}
	On the other hand, by Lemma \ref{lem_length1}(b), we have
	\begin{equation}
		\label{eq_main2}
		\sum_{j \in \Delta_0} (d_j-c_j) \ge (1-t)|I^+|.
	\end{equation}
	For any $j \in \Delta_0$, since $d_E(x) \ge \min(x-c_j,d_j-x)$ for all $x \in (c_j,d_j)$, we have
	\[
	\int_{c_j}^{d_j} d_E^{\,\alpha/(p-1)}
	\ge 2 \int_0^{(d_j-c_j)/2} x^{\alpha/(p-1)} \,dx
	= \frac{2}{\frac{\alpha}{p-1}+1} \left(\frac{d_j-c_j}{2}\right)^{\frac{\alpha}{p-1}+1}.
	\]
	Hence
	\begin{equation}
		\label{eq_main3}
		\sum_{j \in \Delta_0} (d_j-c_j)^{\frac{\alpha}{p-1}+1}
		\lesssim_{\alpha,p} \sum_{j \in \Delta_0} \int_{c_j}^{d_j} d_E^{\,\alpha/(p-1)}
		\le \int_{I^+} d_E^{\,\alpha/(p-1)}.
	\end{equation}
	By H\"older's inequality,
	\begin{equation}
		\label{eq_main4}
		\Bigl(\sum_{j \in \Delta_0} (d_j-c_j)\Bigr)^{\alpha+p-1}
		\le \Bigl(\sum_{j \in \Delta_0} (d_j-c_j)^{\frac{\alpha}{p-1}+1}\Bigr)^{p-1} \mathrm{card}(\Delta_0)^\alpha.
	\end{equation}
	Note that
	\begin{equation}
		\label{eq_main5}
		\mathrm{card}(\Delta_0) L_t(I^+) \le \sum_{j \in \Delta_0} (d_j-c_j) \le |I^+|.
	\end{equation}
	Putting \eqref{eq_main2}, \eqref{eq_main3}, \eqref{eq_main4} and \eqref{eq_main5} together, we get
	\begin{equation}
		\label{eq_main6}
		1 \lesssim_{\alpha,p} \frac{L_t(I^+)^{-\alpha}}{(1-t)^{\alpha+p-1}} \left(\dashint_{I^+} d_E^{\,\alpha/(p-1)}\right)^{p-1}.
	\end{equation}
	Putting \eqref{eq_main1} and \eqref{eq_main6} together and recalling that $d_E^{\,-\alpha} \in A_p^+$, we get
	\[
	\frac{1}{|I^-|} \sum_{i \in \Gamma \setminus \Gamma_0} (b_i-a_i)
	\lesssim_{\alpha,p} \frac{\delta^\alpha [d_E^{\,-\alpha}]_{A_p^+}}{(1-t)^{\alpha+p-1}}.
	\]
	Therefore, by choosing $\delta = \delta(\alpha,p,s,t)$ small enough, we can ensure that $\sum_{i \in \Gamma \setminus \Gamma_0} (b_i-a_i) \le s|I^-|$, as required.
\end{proof}

\begin{lemma}
	\label{lem_main2}
	Let $\emptyset \ne E \subset \R$. Suppose $E$ is $(s,t)$-right median porous for some $0<s<t<1$. Then, for every $1<p<\infty$, there exists $\alpha>0$ such that $d_E^{\,-\alpha} \in A_p^+$.
\end{lemma}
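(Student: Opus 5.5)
The plan is to show that $\log d_E \in \BMO^+$ with the sign that makes $d_E^{\,-\alpha}$ an $A_p^+$ weight. Concretely, set $f = -\log d_E$, which is finite a.e.\ because $|\bar{E}|=0$ by Lemma \ref{lem_porous1}(b). We verify condition (c) of Theorem \ref{thm_BMO} for $f$ with suitable parameters $0<s'<t'<1$. Theorem \ref{thm_BMO} then gives $f\in\BMO^+$. Proposition \ref{prop_ApBMO}(h) then gives, for each fixed $1<p<\infty$, some $w\in A_p^+$ and $\beta\ge0$ with $f=\beta\log w$.

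If $\beta=0$, then $f$ is constant, so $d_E$ is constant a.e. This is impossible for nonempty $E$ with $|\bar E|=0$, since $d_E$ is unbounded and also takes small values near $E$. So $\beta>0$ and $d_E^{\,-1/\beta} = w \in A_p^+$, and we take $\alpha=1/\beta$.

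Rewriting the median condition in terms of $d_E$ via Lemma \ref{lem_median1}(i), with $g(u)=-\log u$ strictly decreasing, we get
\[
M_{t'}(f,I^-) \le -\log M_{1-t''}(d_E,I^-), \qquad M_{s'}(f,I^+) \ge -\log M_{1-s''}(d_E,I^+),
\]
for auxiliary parameters slightly perturbed from $s',t'$. It therefore suffices to prove
\[
M_{\tilde t}(d_E,I^+) \lesssim M_{\tilde s}(d_E,I^-) \qquad \text{for all } I\subset\R,
\]
for some fixed $0<\tilde s<\tilde t<1$. These are obtained by complementing, $\tilde s \approx 1-t'$ and $\tilde t\approx 1-s'$, so the condition $s'<t'$ becomes $\tilde s<\tilde t$. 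The parameters are chosen so that $\tilde s \le s$ and $\tilde t$ relates to $t$; I plan to fix $\tilde s < s$ and use the freedom in Definition \ref{def_porous} via Theorem \ref{thm_main}'s claimed ``all $s,t$'' only at the end, avoiding circularity by working directly with the given $(s,t)$ and adjusting $\tilde s,\tilde t$ around them.

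The core estimate splits into cases. \emph{Case 1:} $I^-\cap E=I^+\cap E=\emptyset$. Then Lemma \ref{lem_dist1}(d) gives the bound directly.

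\emph{Case 2:} $I^-\cap E\ne\emptyset$. By Lemma \ref{lem_dist1}(a), $M_{\tilde s}(d_E,I^-)\gtrsim L_{s_0}(E,I^-)$ for some $s_0<\tilde s$, and we choose $s_0=s$. Right median porosity gives $L_s(E,I^-)\ge\delta L_t(E,I^+)$. On $I^+$ we need $M_{\tilde t}(d_E,I^+)\lesssim L_t(E,I^+)$. If $I^+\cap E\ne\emptyset$, this is the left inequality of Lemma \ref{lem_dist1}(a) with $\tilde t\le t$. If $I^+\cap E=\emptyset$, then $L_t(E,I^+)=|I^+|$ and $d(I^+,E)\le |I^+|$ because $E$ meets $I^-$, so Lemma \ref{lem_dist1}(b) applies.

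\emph{Case 3:} $I^-\cap E=\emptyset$ but $I^+\cap E\ne\emptyset$. Then $d(I^-,E)\le|I^-|$, so by Lemma \ref{lem_dist1}(b) $M_{\tilde s}(d_E,I^-)\approx|I^-|$. Meanwhile $M_{\tilde t}(d_E,I^+)\le L_{\tilde t}(E,I^+)\le|I^+|$ by Lemma \ref{lem_dist1}(a).

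The parameter bookkeeping needs the ordering $\tilde s$ (for Lemma \ref{lem_dist1}(a) on $I^-$, requiring $s<\tilde s$) together with $\tilde t\le t$ on $I^+$. So we need $s<\tilde s<\tilde t\le t$, which is possible since $s<t$. Any such choice, pulled back through the complement relations, yields valid $s'<t'$.

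I expect the main obstacle to be this bookkeeping. The strict inequalities in Lemma \ref{lem_median1}(i) each cost a small shift in the parameters, and every shift has to stay compatible with the ordering $s<\tilde s<\tilde t\le t$. The geometric cases themselves look routine, since the lemmas in Lemma \ref{lem_dist1} were clearly set up for exactly this.
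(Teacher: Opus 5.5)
Your proposal is correct and is essentially the paper's proof. You set $f=-\log d_E$, use Lemma \ref{lem_median1}(i) to reduce condition (c) of Theorem \ref{thm_BMO} to $M_{\tilde t}(d_E,I^+)\lesssim M_{\tilde s}(d_E,I^-)$ with $s<\tilde s<\tilde t\le t$ (the paper takes $\tilde s=1-\tau'$, $\tilde t=t$), verify this by the same case split using Lemma \ref{lem_dist1} and right median porosity, and finish with Proposition \ref{prop_ApBMO}(h). Delete the stray remark about choosing $\tilde s\le s$, since your case analysis correctly needs $s<\tilde s$, and drop the mention of Theorem \ref{thm_main}, which would be circular.
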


\begin{proof}
	By Lemma \ref{lem_porous1}(a) and the fact that $d_E = d_{\bar{E}}$, we may assume $E$ is closed. By Lemma \ref{lem_porous1}(b), $|E| = 0$ and there exists $\delta>0$ such that $L_s(E,I^-) \ge \delta L_t(E,I^+)$ for all $I\subset\R$. The former implies that $d_E : \R \to [0,\infty)$ is a continuous function with $d_E>0$ a.e. Let $f = -\log d_E$. Then $f : \R \to (-\infty,+\infty]$ is a continuous function with $f < +\infty$ a.e.
	
	Choose $0<\sigma<\tau<\tau'<1$ such that $s<1-\tau'<1-\tau<1-\sigma=t$. We claim that there exists $C>0$ such that, for all $I\subset\R$, we have
	\begin{equation}
		\label{eq_main7}
		M_\tau(f,I^-) - M_\sigma(f,I^+) \le C.
	\end{equation}
	To prove this claim, we consider four cases.
	
	\underline{Case 1}: Suppose $I^- \cap E \ne \emptyset$ and $I^+ \cap E \ne \emptyset$. By Lemma \ref{lem_dist1}(a),
	\[
	L_s(E,I^-) \lesssim_{s,t} M_{1-\tau'}(d_E,I^-)
	\qquad \text{and} \qquad
	L_t(E,I^+) \ge M_t(d_E,I^+).
	\]
	Hence $M_{1-\tau'}(d_E,I^-) \gtrsim_{s,t,\delta} M_t(d_E,I^+)$, so there exists $C = C(s,t,\delta)$ such that
	\[
	(-\log M_{1-\tau'}(d_E,I^-)) - (-\log M_t(d_E,I^+)) \le C.
	\]
	By Lemma \ref{lem_median1}(i),
	\[
	-\log M_{1-\tau'}(d_E,I^-) \ge M_\tau(f,I^-)
	\qquad \text{and} \qquad
	-\log M_t(d_E,I^+) \le M_\sigma(f,I^+).
	\]
	Thus, \eqref{eq_main7} holds.
	
	\underline{Case 2}: Suppose $I^- \cap E = \emptyset$ and $I^+ \cap E \ne \emptyset$. Then $d(I^-,E) \le |I^-|$, so, by Lemma \ref{lem_dist1}(b),
	\[
	L_s(E,I^-) \approx_s M_s(d_E,I^-) \le M_{1-\tau'}(d_E,I^-).
	\]
	The rest of the argument is the same as in Case 1.
	
	\underline{Case 3}: Suppose $I^- \cap E \ne \emptyset$ and $I^+ \cap E = \emptyset$. Then $d(I^+,E) \le |I^+|$, so, by Lemma \ref{lem_dist1}(b),
	\[
	L_t(E,I^+) \approx_t M_t(d_E,I^+).
	\]
	The rest of the argument is the same as in Case 1.
	
	\underline{Case 4}: Suppose $I^- \cap E = \emptyset$ and $I^+ \cap E = \emptyset$. By Lemma \ref{lem_dist1}(d),
	\[
	M_t(d_E,I^+) \approx_{s,t} M_s(d_E,I^-) \le M_{1-\tau'}(d_E,I^-).
	\]
	The rest of the argument is the same as in Case 1. This completes the proof of the claim.
	
	Now, by Theorem \ref{thm_BMO}, we have $f \in \BMO^+$. Therefore, by Proposition \ref{prop_ApBMO}(h), for every $1<p<\infty$, there exists $\alpha>0$ such that $\exp(\alpha f) \in A_p^+$, i.e.\ $d_E^{\,-\alpha} \in A_p^+$.
\end{proof}

Now, we are ready to prove Theorem \ref{thm_main}.

\begin{proof}[Proof of Theorem \ref{thm_main}]
	By Lemma \ref{lem_main1}, (a) implies (d). Clearly, (d) implies (c). By Lemma \ref{lem_main2}, (c) implies (b). Clearly, (b) implies (a).
\end{proof}

\section{Consequences and examples}
\label{sec_conseq}

In this section, we look at consequences of Theorem \ref{thm_main} and present examples of right median porous sets which do not satisfy other notions of porosity. We begin with the following proposition, which ties together the theory of standard median porous sets and the theory of one-sided median porous sets.

\begin{proposition}
	\label{prop_conseq1}
	Let $E \subset \R$. Then $E$ is median porous if and only if $E$ is both left median porous and right median porous.
\end{proposition}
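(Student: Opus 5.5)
The plan is to pass through the distance-weight characterizations and use $A_p = A_p^+ \cap A_p^-$ (Proposition \ref{prop_ApBMO}(c)). First I need the left-sided version of Theorem \ref{thm_main}. Let $\rho(x) = -x$. Then $E$ is left median porous if and only if $\rho(E)$ is right median porous, since $\rho$ maps $I^\pm$ onto $(\rho I)^\mp$ and preserves $L_s$. Similarly, $w \in A_p^-$ if and only if $w\circ\rho \in A_p^+$, and $d_{\rho(E)} = d_E \circ \rho$. Applying Theorem \ref{thm_main} to $\rho(E)$ therefore shows that $E$ is left median porous if and only if, for every $1<p<\infty$, some $\alpha>0$ gives $d_E^{\,-\alpha}\in A_p^-$. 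The same holds with ``for some $p$''.

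For the forward direction, suppose $E$ is median porous. By Theorem \ref{thm_PU1}, $d_E^{\,-\alpha}\in A_p$ for some $\alpha>0$ and $1<p<\infty$. By Proposition \ref{prop_ApBMO}(c), this weight lies in both $A_p^+$ and $A_p^-$. Theorem \ref{thm_main} and its mirror image then give that $E$ is right and left median porous.

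For the converse, suppose $E$ is left and right median porous. Theorem \ref{thm_main} gives $\alpha_1,p_1$ with $d_E^{\,-\alpha_1}\in A_{p_1}^+$, and the mirrored version gives $\alpha_2,p_2$ with $d_E^{\,-\alpha_2}\in A_{p_2}^-$. The main obstacle is that the exponents need not agree. To fix this, use part (b) of Theorem \ref{thm_main} and its mirror with a common $p$, say $p=2$. This gives $d_E^{\,-\alpha_1}\in A_2^+$ and $d_E^{\,-\alpha_2}\in A_2^-$.

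Now let $\alpha=\min(\alpha_1,\alpha_2)$, and write $d_E^{\,-\alpha} = (d_E^{\,-\alpha_1})^{\delta}$ with $\delta=\alpha/\alpha_1\in(0,1]$. By Proposition \ref{prop_ApBMO}(f), $d_E^{\,-\alpha}\in A^+_{1+\delta}\subset A_2^+$, using the inclusions in Proposition \ref{prop_ApBMO}(a). The mirrored versions of (a) and (f), which follow by the reflection $\rho$, give $d_E^{\,-\alpha}\in A_2^-$. By Proposition \ref{prop_ApBMO}(c), $d_E^{\,-\alpha}\in A_2$, and Theorem \ref{thm_PU1} then gives that $E$ is median porous.

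One small point needs care. For $\delta<1$, Proposition \ref{prop_ApBMO}(f) applies directly; when $\delta=1$ there is nothing to prove. The weight condition, namely local integrability and positivity, is inherited from membership in these classes.
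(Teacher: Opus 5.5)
Your argument is correct and is essentially the paper's proof: both go through Theorem \ref{thm_PU1}, Theorem \ref{thm_main} and its mirror image, and Proposition \ref{prop_ApBMO}(a),(c),(f), differing only in that the paper takes $r=\max(p,q)$ where you use part (b) of Theorem \ref{thm_main} to fix $p=2$. You should add the case $E=\emptyset$, which the theorems you invoke exclude (there $d_E\equiv+\infty$); it is trivial, since $L_s(\emptyset,I)=|I|$ makes $\emptyset$ median porous, left median porous and right median porous.
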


\begin{proof}
	The case $E = \emptyset$ is trivial, so we assume $E \ne \emptyset$. Suppose $E$ is median porous. By Theorem \ref{thm_PU1}, there exist $\alpha>0$ and $p\in(1,\infty)$ such that $d_E^{\,-\alpha} \in A_p$. By Proposition \ref{prop_ApBMO}(c), $d_E^{\,-\alpha} \in A_p^+ \cap A_p^-$. By Theorem \ref{thm_main} (and its obvious analogue for $A_p^-$ and left median porosity), $E$ is both left median porous and right median porous.
	
	Conversely, suppose $E$ is both left median porous and right median porous. By Theorem \ref{thm_main} (and its obvious analogue for $A_p^-$ and left median porosity), there exist $\alpha,\beta>0$ and $p,q \in (1,\infty)$ such that $d_E^{\,-\alpha} \in A_p^+$ and $d_E^{\,-\beta} \in A_q^-$. Let $\gamma = \min(\alpha,\beta)$ and $r = \max(p,q)$. By Proposition \ref{prop_ApBMO}(f) and (a), $d_E^{\,-\gamma} \in A_r^+ \cap A_r^-$. By Proposition \ref{prop_ApBMO}(c), $d_E^{\,-\gamma} \in A_r$. By Theorem \ref{thm_PU1}, $E$ is median porous.
\end{proof}

The following proposition (which is a slightly stronger version of \cite[Theorem 3.7]{AGGM}) can be proved similarly (using Theorems \ref{thm_ALMV1} and \ref{thm_AGGM} instead of Theorems \ref{thm_PU1} and \ref{thm_main}).

\begin{proposition}
	\label{prop_conseq2}
	Let $E \subset \R$. Then $E$ is weakly porous if and only if $E$ is both left weakly porous and right weakly porous.
\end{proposition}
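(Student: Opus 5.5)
The plan is to mirror the proof of Proposition \ref{prop_conseq1}, now in the $A_1$ setting. We combine Theorem \ref{thm_ALMV1} with Theorem \ref{thm_AGGM} and with the analogue of Theorem \ref{thm_AGGM} for $A_1^-$ and left weak porosity. That analogue follows from Theorem \ref{thm_AGGM} by the reflection $x \mapsto -x$: this reflection interchanges $A_1^+$ and $A_1^-$ and interchanges right and left weak porosity, and $d_{-E}(x) = d_E(-x)$. The case $E=\emptyset$ is trivial, so we assume $E \ne \emptyset$.

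For the forward direction, suppose $E$ is weakly porous. By Theorem \ref{thm_ALMV1}, there exists $\alpha>0$ such that $d_E^{\,-\alpha} \in A_1$. By Proposition \ref{prop_ApBMO}(c) with $p=1$, we get $d_E^{\,-\alpha} \in A_1^+ \cap A_1^-$. Theorem \ref{thm_AGGM} then shows that $E$ is right weakly porous, and its left analogue shows that $E$ is left weakly porous.

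For the converse, Theorem \ref{thm_AGGM} and its left analogue give $\alpha,\beta>0$ with $d_E^{\,-\alpha} \in A_1^+$ and $d_E^{\,-\beta} \in A_1^-$. We need a single exponent that works on both sides. Let $\gamma = \min(\alpha,\beta)$. Apply Proposition \ref{prop_ApBMO}(f) with $p=1$ and $\delta = \gamma/\alpha$; this gives $(d_E^{\,-\alpha})^{\gamma/\alpha} = d_E^{\,-\gamma} \in A_1^+$, since $1+\delta(p-1) = 1$. If $\delta = 1$ there is nothing to do. Alternatively, one can check directly that $[w^\delta]_{A_1^+} \le [w]_{A_1^+}^\delta$ by Jensen's inequality. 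The same argument gives $d_E^{\,-\gamma} \in A_1^-$.

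Then Proposition \ref{prop_ApBMO}(c) gives $d_E^{\,-\gamma} \in A_1$, and Theorem \ref{thm_ALMV1} shows that $E$ is weakly porous. There is no real obstacle here. The only point needing care is the common-exponent step, which works because the parameter $p=1$ is preserved under taking powers $\delta<1$, so no enlargement of $p$ is needed, unlike in Proposition \ref{prop_conseq1}.
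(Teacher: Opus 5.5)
Your proposal is correct and follows the route the paper indicates. The paper proves Proposition~\ref{prop_conseq2} by repeating the argument of Proposition~\ref{prop_conseq1}, with Theorems~\ref{thm_ALMV1} and \ref{thm_AGGM} in place of Theorems~\ref{thm_PU1} and \ref{thm_main}, using Proposition~\ref{prop_ApBMO}(c) and (f) to pass to a common exponent, exactly as you do; your reflection argument for the left analogue of Theorem~\ref{thm_AGGM} and your remark that $p=1$ is preserved under $w\mapsto w^\delta$ are both sound.
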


The following lemma will be useful for constructing right median porous sets from median porous sets.

\begin{lemma}
	\label{lem_conseq1}
	Let $E \subset \R$ and $x \in \R$. If $E$ is $(s,t,\delta)$-right median porous, then $E \cap [x,\infty)$ and $E \cap (x,\infty)$ are $(s,t,\delta)$-right median porous.
\end{lemma}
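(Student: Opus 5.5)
By Lemma \ref{lem_porous1}(b), it suffices to prove two things for $F$, where $F$ denotes either $E\cap[x,\infty)$ or $E\cap(x,\infty)$:
\begin{itemize}
\item $|\bar F|=0$;
\item $L_s(F,I^-)\ge\delta L_t(F,I^+)$ for every $I\subset\R$.
\end{itemize}
The first is immediate. Since $E$ is $(s,t,\delta)$-right median porous, Lemma \ref{lem_porous1}(b) gives $|\bar E|=0$, and $\bar F\subset\bar E$, so $|\bar F|=0$.

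For the inequality I use two facts about $L_s$ noted after Definition \ref{def_length}. First, $F\subset E$ gives $L_s(F,J)\ge L_s(E,J)$ for every $J$. Second, $L_s(F,J)$ depends only on $F\cap J$, since the admissible intervals are subintervals of $J\setminus F$. Fix $I=(a,b)$ with midpoint $c=\frac{a+b}{2}$, so that $I^-=(a,c)$ and $I^+=(c,b)$ are open. I split into two cases according to the position of $x$ relative to $c$.

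\emph{Case $x\le c$.} Every point of $I^+$ exceeds $c\ge x$, so $F\cap I^+=E\cap I^+$, and therefore $L_t(F,I^+)=L_t(E,I^+)$. Combining this with monotonicity and the hypothesis on $E$ gives
\[
L_s(F,I^-)\ge L_s(E,I^-)\ge\delta L_t(E,I^+)=\delta L_t(F,I^+).
\]

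\emph{Case $x>c$.} Every point of $I^-$ is less than $c<x$, so $F\cap I^-=\emptyset$. Hence $I^-$ is $F$-free and $L_s(F,I^-)=|I^-|$. We also have $L_t(F,I^+)\le|I^+|=|I^-|$, and $\delta<1$, so
\[
L_s(F,I^-)=|I^-|\ge L_t(F,I^+)\ge\delta L_t(F,I^+).
\]

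The two cases cover every $I$, and the argument is identical for both versions of $F$, so Lemma \ref{lem_porous1}(b) finishes the proof. The only delicate point is the endpoint bookkeeping, in particular whether $x=c$ can lie in $I^\pm$. The convention that $I^-$ and $I^+$ are open intervals disposes of this: $c$ belongs to neither half, so the two cases are exhaustive and correct. I expect no real obstacle.
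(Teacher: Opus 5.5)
Your proof is correct and follows the paper's argument essentially verbatim. You reduce via Lemma \ref{lem_porous1}(b), obtain $|\bar F|=0$ from $\bar F\subset\bar E$, and split on the position of $x$ relative to the midpoint $c$. In one case you use $F\cap I^-=\emptyset$, and in the other you use $F\cap I^+=E\cap I^+$ together with the monotonicity $L_s(F,I^-)\ge L_s(E,I^-)$.
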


\begin{proof}
	Let $A = E \cap [x,\infty)$ or $A = E \cap (x,\infty)$. By Lemma \ref{lem_porous1}(b), $|\bar{E}|=0$ and $L_s(E,I^-) \ge \delta L_t(E,I^+)$ for all $I\subset\R$. Since $A \subset E$, we have $|\bar{A}|=0$. By Lemma \ref{lem_porous1}(b), it suffices to show that $L_s(A,I^-) \ge \delta L_t(A,I^+)$ for all $I\subset\R$.
	
	Given $I\subset\R$, let $c$ be the midpoint of $I$. If $c \le x$, then $I^- \cap A = \emptyset$, so $L_s(A,I^-) = |I^-|$. Since $L_t(A,I^+) \le |I^+|$, it follows that $L_s(A,I^-) \ge \delta L_t(A,I^+)$. On the other hand, if $c \ge x$, then $I^+ \cap A = I^+ \cap E$, so $L_t(A,I^+) = L_t(E,I^+)$. Since $A \subset E$, we have $L_s(A,I^-) \ge L_s(E,I^-)$. Since $L_s(E,I^-) \ge \delta L_t(E,I^+)$, it follows that $L_s(A,I^-) \ge \delta L_t(A,I^+)$.
\end{proof}

The following lemma (which is a slightly stronger version of \cite[Proposition 3.8]{AGGM}) can be proved similarly (using the analogue of Lemma \ref{lem_porous1} for right weakly porous sets).

\begin{lemma}
	\label{lem_conseq2}
	Let $E \subset \R$ and $x \in \R$. If $E$ is $(s,\delta)$-right weakly porous, then $E \cap [x,\infty)$ and $E \cap (x,\infty)$ are $(s,\delta)$-right weakly porous.
\end{lemma}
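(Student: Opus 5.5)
Lemma \ref{lem_conseq2} can be proved exactly as Lemma \ref{lem_conseq1}, only with $L_1(\cdot,I^+)$ in place of $L_t(\cdot,I^+)$. Let $A = E \cap [x,\infty)$ or $A = E \cap (x,\infty)$. By the analogue of Lemma \ref{lem_porous1}(b) for right weakly porous sets (stated in the remark following that lemma), the hypothesis gives $|\bar{E}|=0$ and $L_s(E,I^-) \ge \delta L_1(E,I^+)$ for all $I\subset\R$. Since $A \subset E$, we have $\bar{A} \subset \bar{E}$, so $|\bar{A}|=0$. By the same characterization applied to $A$, it then suffices to verify $L_s(A,I^-) \ge \delta L_1(A,I^+)$ for every $I\subset\R$.

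Fix $I\subset\R$ with midpoint $c$. We split according to the position of $c$ relative to $x$.

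If $c \le x$, then $I^- = (a,c)$ lies in $(-\infty,x)$, so $I^- \cap A = \emptyset$. Hence $I^-$ is $A$-free and $L_s(A,I^-) = |I^-|$. On the other hand, $L_1(A,I^+) \le |I^+| = |I^-|$. Since $\delta<1$, we get $L_s(A,I^-) \ge \delta L_1(A,I^+)$.

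If $c \ge x$, then $I^+ = (c,b) \subset (x,\infty)$, so $I^+ \cap A = I^+ \cap E$. The quantity $L_1(\cdot,I^+)$ depends only on which open subintervals of $I^+$ are disjoint from the set, so $L_1(A,I^+) = L_1(E,I^+)$. Monotonicity of $L_s$ under inclusion gives $L_s(A,I^-) \ge L_s(E,I^-)$. Chaining these,
\[
L_s(A,I^-) \ge L_s(E,I^-) \ge \delta L_1(E,I^+) = \delta L_1(A,I^+).
\]

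There is no real obstacle here. The only points needing care are that the reduction via the analogue of Lemma \ref{lem_porous1}(b) requires the same $\delta$ on both sides, which it does, and that the boundary case $c=x$ is handled by either branch for both choices of $A$.
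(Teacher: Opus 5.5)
Your proposal is correct and follows the paper's approach exactly: the paper proves this lemma by repeating the argument of Lemma~\ref{lem_conseq1}, using the analogue of Lemma~\ref{lem_porous1}(b) for right weakly porous sets. Like that argument, yours reduces to checking $L_s(A,I^-) \ge \delta L_1(A,I^+)$ and splits on whether the midpoint of $I$ satisfies $c \le x$ or $c \ge x$.
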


Next, we give two examples of right median porous sets, one of which is not median porous and the other of which is not right weakly porous.

\begin{example}
	\label{ex1}
	It is easy to see that $\Z$ is $(\frac{1}{2},\frac{1}{2})$-weakly porous. By Proposition \ref{prop_conseq2} and Lemma \ref{lem_conseq2}, $\Z_{\ge0}$ is right weakly porous (and hence right median porous). However, $\Z_{\ge0}$ is not median porous (and hence not weakly porous). To prove this, let $I_n = (-n,n)$ and note that
	\[
	\frac{L_{1/4}(\Z_{\ge0},I_n)}{L_{1/2}(\Z_{\ge0},I_n)}
	= \frac{1}{n} \to 0 \qquad \text{as}~n \to \infty.
	\]
	Thus, there does not exist $\delta>0$ such that $L_{1/4}(\Z_{\ge0},I) \ge \delta L_{1/2}(\Z_{\ge0},I)$ for all $I\subset\R$, so $\Z_{\ge0}$ is not median porous.
\end{example}

\begin{example}
	\label{ex2}
	For any $0<\gamma<1$, let
	\[
	E_\gamma = \{\pm n^\gamma : n \in \Z_{\ge0}\}.
	\]
	It is shown in \cite[Theorem 9.1]{PU} that $E_\gamma$ is median porous (so, by Proposition \ref{prop_conseq1}, $E_\gamma$ is right median porous). However, $E_\gamma$ is not right weakly porous (and it follows by Proposition \ref{prop_conseq2} that $E_\gamma$ is not weakly porous). To prove this, let $I_n = (-2n^\gamma,2n^\gamma)$ and note that
	\[
	\frac{L_{1/2}(E_\gamma,I_n^-)}{L_1(E_\gamma,I_n^+)}
	= \frac{n^\gamma-(n-1)^\gamma}{1}
	\approx \gamma n^{\gamma-1} \to 0 \qquad \text{as}~n \to \infty.
	\]
	Thus, there does not exist $\delta>0$ such that $L_{1/2}(E_\gamma,I^-) \ge \delta L_1(E_\gamma,I^+)$ for all $I\subset\R$, so $E_\gamma$ is not right weakly porous.
\end{example}

Finally, we give an example of a right median porous set which does not satisfy any of the other notions of porosity that we have considered in this paper. Thus, right median porosity is strictly weaker than all the other properties listed in Definition \ref{def_porous}.

\begin{example}
	\label{ex3}
	For any $A \subset \R$, let $A^+ = A \cap [0,+\infty)$ and $A^- = A \cap (-\infty,0]$. Given $0<\gamma<1$ and $m \in \Z$ with $m \ge 2$, let
	\[
	E = E_\gamma^- \cup E_{\gamma/m}^+.
	\]
	We claim that $E$ is right median porous but neither median porous nor right weakly porous.
	
	First, we show that $E$ is not right weakly porous. Let $I_n = (-2n^\gamma, 2n^\gamma)$. Then
	\[
	\frac{L_{1/2}(E,I_n^-)}{L_1(E,I_n^+)}
	= \frac{n^\gamma-(n-1)^\gamma}{1}
	\approx \gamma n^{\gamma-1} \to 0 \qquad \text{as}~n \to \infty.
	\]
	Thus, there does not exist $\delta>0$ such that $L_{1/2}(E,I^-) \ge \delta L_1(E,I^+)$ for all $I\subset\R$, so $E$ is not right weakly porous.
	
	Next, we show that $E$ is not median porous. By Proposition \ref{prop_conseq1}, it suffices to show that $E$ is not left median porous. Let $I_n = (-2n^\gamma, 2n^\gamma)$. Then
	\[
	\frac{L_{1/2}(E,I_n^+)}{L_{1/2}(E,I_n^-)}
	= \frac{(n^m)^{\gamma/m}-(n^m-1)^{\gamma/m}}{n^\gamma-(n-1)^\gamma}
	\approx \frac{(\gamma/m)(n^m)^{(\gamma/m)-1}}{\gamma n^{\gamma-1}}
	= \frac{1}{mn^{m-1}} \to 0 \qquad \text{as}~n \to \infty.
	\]
	Since $L_{1/4}(E,I_n^+) \le L_{1/2}(E,I_n^+)$, it follows that there does not exist $\delta>0$ such that $L_{1/4}(E,I^+) \ge \delta L_{1/2}(E,I^-)$ for all $I\subset\R$, so $E$ is not left median porous.
	
	Finally, we show that $E$ is right median porous. Note that $E_\gamma \subset E_{\gamma/m}$ since $n^\gamma = (n^m)^{\gamma/m}$ for all $n \in \Z_{\ge0}$. Hence $E_\gamma \subset E \subset E_{\gamma/m}$, so, for all $0 < s \le 1$ and $I\subset\R$, we have
	\begin{equation}
		\label{eq_conseq1}
		L_s(E_\gamma,I) \ge L_s(E,I) \ge L_s(E_{\gamma/m},I).
	\end{equation}
	As we mentioned in Example \ref{ex2}, $E_\gamma$ and $E_{\gamma/m}$ are median porous. By Proposition \ref{prop_conseq1}, $E_\gamma$ and $E_{\gamma/m}$ are right median porous. Fix $0<s<t<1$. Then there exists $\delta>0$ such that, for all $I\subset\R$, we have
	\begin{equation}
		\label{eq_conseq2}
		L_s(E_\gamma,I^-) \ge \delta L_t(E_\gamma,I^+)
		\qquad \text{and} \qquad
		L_s(E_{\gamma/m},I^-) \ge \delta L_t(E_{\gamma/m},I^+).
	\end{equation}
	Given $I\subset\R$, let $c$ be the midpoint of $I$. If $c \ge 0$, then
	\[
	L_s(E,I^-) \ge L_s(E_{\gamma/m},I^-)
	\ge \delta L_t(E_{\gamma/m},I^+)
	= \delta L_t(E,I^+),
	\]
	where we have used \eqref{eq_conseq1}, \eqref{eq_conseq2}, and the fact that $I^+ \cap E = I^+ \cap E_{\gamma/m}$. If $c \le 0$, then
	\[
	L_s(E,I^-) = L_s(E_\gamma,I^-)
	\ge \delta L_t(E_\gamma,I^+)
	\ge \delta L_t(E,I^+),
	\]
	where we have used the fact that $I^- \cap E = I^- \cap E_\gamma$, \eqref{eq_conseq2}, and \eqref{eq_conseq1}. In either case, $L_s(E,I^-) \ge \delta L_t(E,I^+)$, so $E$ is right median porous.
\end{example}

\section{The quantitative characterization}
\label{sec_exp}

In this section, we define the one-sided Muckenhoupt exponents $\Mu_1^+(E)$ and $\Mu_\infty^+(E)$ of a set $E$ and prove Theorems \ref{thm_exp1} and \ref{thm_exp2}, which establish the precise range of exponents $\alpha>0$ such that $d_E^{\,-\alpha}$ is an $A_1^+$ weight or an $A_p^+$ weight for some $1<p<\infty$, respectively. We begin by recalling the definitions of the standard Muckenhoupt exponents $\Mu_1(E)$ and $\Mu_\infty(E)$, which were introduced in \cite{ALMV} and \cite{PU}, respectively.

\begin{definition}
	\label{def_exp1}
	Let $\emptyset \ne E \subset \R$.
	\begin{enumerate}[label=\upshape(\alph*)]
		\item If $L_1(E,I) = 0$ for some $I\subset\R$, set $\Mu_1(E)=0$. Otherwise, let $\Mu_1(E)$ be the supremum of all $\alpha \ge 0$ for which there exists $C>0$ such that, for every $I\subset\R$ centred at a point of $E$ and for every $0 < r \le L_1(E,I)$, we have
		\begin{equation*}
			\frac{|I \cap E_r|}{|I|}
			\le C \left(\frac{r}{L_1(E,I)}\right)^\alpha.
		\end{equation*}
		\item If $L_s(E,I) = 0$ for some $0<s<1$ and $I\subset\R$, set $\Mu_\infty(E)=0$. Otherwise, let $\Mu_\infty(E)$ be the supremum of all $\alpha \ge 0$ for which there exist $0<s<1$ and $C>0$ such that, for every $I\subset\R$ centred at a point of $E$ and for every $0 < r \le L_s(E,I)$, we have
		\begin{equation*}
			\frac{|I \cap E_r|}{|I|}
			\le C \left(\frac{r}{L_s(E,I)}\right)^\alpha.
		\end{equation*}
	\end{enumerate}
\end{definition}

Inspired by Definition \ref{def_exp1}, we define the one-sided Muckenhoupt exponents $\Mu_1^+(E)$ and $\Mu_\infty^+(E)$ as follows.

\begin{definition}
	\label{def_exp2}
	Let $\emptyset \ne E \subset \R$.
	\begin{enumerate}[label=\upshape(\alph*)]
		\item If $L_1(E,I) = 0$ for some $I\subset\R$, set $\Mu_1^+(E)=0$. Otherwise, let $\Mu_1^+(E)$ be the supremum of all $\alpha \ge 0$ for which there exists $C>0$ such that, for every $I\subset\R$ centred at a point of $E$ and for every $0 < r \le L_1(E,I^+)$, we have
		\begin{equation}
			\label{eq_exp1a}
			\frac{|I^- \cap E_r|}{|I^-|}
			\le C \left(\frac{r}{L_1(E,I^+)}\right)^\alpha.
		\end{equation}
		\item If $L_s(E,I) = 0$ for some $0<s<1$ and $I\subset\R$, set $\Mu_\infty^+(E)=0$. Otherwise, let $\Mu_\infty^+(E)$ be the supremum of all $\alpha \ge 0$ for which there exist $0<s<1$ and $C>0$ such that, for every $I\subset\R$ centred at a point of $E$ and for every $0 < r \le L_s(E,I^+)$, we have
		\begin{equation}
			\label{eq_exp1b}
			\frac{|I^- \cap E_r|}{|I^-|}
			\le C \left(\frac{r}{L_s(E,I^+)}\right)^\alpha.
		\end{equation}
	\end{enumerate}
\end{definition}

The quantities $\Mu_1^-(E)$ and $\Mu_\infty^-(E)$ are defined similarly; simply interchange $I^-$ and $I^+$ in the definitions of $\Mu_1^+(E)$ and $\Mu_\infty^+(E)$.

The following lemma provides some preliminary information about the possible values of $\Mu_1^+(E)$ and $\Mu_\infty^+(E)$.

\begin{lemma}
	\label{lem_exp1}
	Let $\emptyset \ne E \subset \R$.
	\begin{enumerate}[label=\upshape(\alph*)]
		\item $0 \le \Mu_1^+(E) \le 1$.
		\item $0 \le \Mu_\infty^+(E) \le 1$.
		\item Let $\alpha>0$. If $d_E^{\,-\alpha} \in L^1_\mathrm{loc}$, then $\alpha<1$.
	\end{enumerate}
\end{lemma}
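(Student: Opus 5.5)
Nonnegativity in (a) and (b) is immediate from the definitions: either the exponent is set to $0$, or it is a supremum over a set containing $\alpha=0$. That $\alpha=0$ is admissible follows since the left-hand sides of \eqref{eq_exp1a} and \eqref{eq_exp1b} are at most $1$, so $C=1$ works. The real content is therefore the upper bound $1$ in (a) and (b), and part (c).

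For the upper bounds, I fix a point $x_0 \in E$ and test \eqref{eq_exp1b} on intervals $I=(x_0-R,x_0+R)$ centred at $x_0$, with $r$ small compared to $L_s(E,I^+)$. The neighbourhood $E_r$ contains $(x_0-r,x_0)$, and this set lies in $I^-$ once $r\le R$. Hence $|I^-\cap E_r|/|I^-| \ge r/R$. If \eqref{eq_exp1b} held with some exponent $\alpha>1$, we would get
\[
\frac{r}{R} \le C\left(\frac{r}{L_s(E,I^+)}\right)^{\alpha}
\qquad\text{for all } 0<r\le L_s(E,I^+).
\]
Since $L_s(E,I^+)\le |I^+| = R$, this forces $r^{1-\alpha} \le C R\, L_s(E,I^+)^{-\alpha}$. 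Letting $r\downarrow 0$, the left side blows up (because $1-\alpha<0$), while the right side is a fixed positive finite number whenever $L_s(E,I^+)>0$. In the nontrivial case of the definition, $L_s(E,I)>0$ for all $I$, so $L_s(E,I^+)>0$ holds. This contradiction shows that every admissible $\alpha$ satisfies $\alpha\le 1$, hence $\Mu_\infty^+(E)\le1$. The same argument with $L_1$ in place of $L_s$ gives $\Mu_1^+(E)\le 1$. The only care needed is the convention that $I\subset\R$ denotes an open interval, so the computation of $E_r\cap I^-$ is exact up to endpoints.

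For (c), I take $x_0\in E$ and note that $d_E(x)\le |x-x_0|$ everywhere. Hence $d_E^{\,-\alpha}(x)\ge |x-x_0|^{-\alpha}$, and $\int_{x_0-1}^{x_0+1}|x-x_0|^{-\alpha}\,dx=\infty$ when $\alpha\ge1$. So local integrability forces $\alpha<1$.

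None of these steps is a serious obstacle. The one point to watch is the degenerate situation where $L_s(E,I^+)$ might vanish, but that case is excluded by the definition's convention assigning the value $0$.
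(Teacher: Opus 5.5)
Your proposal is correct and follows the paper's proof essentially step for step. Nonnegativity comes from taking $\alpha=0$ and $C=1$. The upper bound $1$ comes from an interval centred at $x_0\in E$, the inclusion $(x_0-r,x_0)\subset I^-\cap E_r$, and letting $r\downarrow 0$. Part (c) follows from $d_E(y)\le|y-x_0|$ together with the divergence of $\int_{-1}^{1}|y|^{-\alpha}\,dy$ for $\alpha\ge 1$.
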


\begin{proof}
	(a) Assume $L_1(E,I)>0$ for all $I\subset\R$. Note that \eqref{eq_exp1a} is satisfied if $\alpha=0$ and $C=1$. Thus, $\Mu_1^+(E) \ge 0$. Now, suppose $\alpha \ge 0$ and $C>0$ are such that \eqref{eq_exp1a} holds. Pick $x \in E$ and pick $I\subset\R$ centred at $x$. Then, for all $0 < r \le |I^-|$, we have $I^- \cap E_r \supset (x-r,x)$ and hence $|I^- \cap E_r| \ge r$. Together with \eqref{eq_exp1a}, this implies that, for all $0 < r \le L_1(E,I^+)$, we have $r \le K r^\alpha$, where $K$ is a constant that does not depend on $r$. If $\alpha>1$, then $1 \le K r^{\alpha-1} \to 0$ as $r \to 0$, which is a contradiction. Therefore, we must have $\alpha \le 1$, so $\Mu_1^+(E) \le 1$.
	
	(b) This is similar to (a), so we omit the details.
	
	(c) Pick $x \in E$. Then $d_E(y) \le |y-x|$ for all $x \in \R$, so
	\[
	\infty > \int_{x-1}^{x+1} d_E^{\,-\alpha}
	\ge \int_{x-1}^{x+1} |y-x|^{-\alpha}\,dy
	= \int_{-1}^1 |y|^{-\alpha}\,dy
	\]
	and hence $\alpha<1$.
\end{proof}

\begin{remark}
	\label{rmk_exp}
	In Lemma \ref{lem_exp1}, the converse of (c) is false, even if $|\bar{E}|=0$. For example, let $E = \{1/n : n \in \Z_{>0}\} \cup \{0\}$. Then $E$ is a countable compact set, so $|\bar{E}|=0$. However, for any $n \in \Z_{>0}$ and $x \in (\frac{1}{n+1},\frac{1}{n})$, we have $d_E(x) = \min(x-\frac{1}{n+1},\frac{1}{n}-x)$, so, for all $1/2 \le \alpha < 1$, we have
	\begin{align*}
		\int_0^1 d_E^{\,-\alpha}
		&= \sum_{n=1}^\infty \int_{1/(n+1)}^{1/n} d_E^{\,-\alpha}
		= 2 \sum_{n=1}^\infty \int_{0}^{[(1/n)-(1/(n+1))]/2} x^{-\alpha}\,dx \\
		&\approx_\alpha \sum_{n=1}^\infty \left(\frac{1}{n}-\frac{1}{n+1}\right)^{1-\alpha}
		\approx_\alpha \sum_{n=1}^\infty \frac{1}{n^{2(1-\alpha)}}
		= \infty.
	\end{align*}
\end{remark}

The following two lemmas will be used in the proofs of Theorems \ref{thm_exp1} and \ref{thm_exp2}.

\begin{lemma}
	\label{lem_dist2}
	Let $\emptyset \ne E \subset \R$, $0<\alpha<1$, $c>0$, and $I\subset\R$.
	\begin{enumerate}[label=\upshape(\alph*)]
		\item If $I \cap E = \emptyset$, then $\dashint_I d_E^{\,-\alpha} \lesssim_\alpha |I|^{-\alpha}$ and $\esssup_I d_E^{\,\alpha} \gtrsim_\alpha |I|^\alpha$.
		\item If $d(I,E) \le c|I|$, then $\dashint_I d_E^{\,-\alpha} \gtrsim_{\alpha,c} |I|^{-\alpha}$ and $\esssup_I d_E^{\,\alpha} \lesssim_{\alpha,c} |I|^\alpha$.
		\item If $d(I,E) \ge c|I|$, then $\dashint_I d_E^{\,-\alpha} \approx_{\alpha,c} d(I,E)^{-\alpha}$ and $\esssup_I d_E^{\,\alpha} \approx_{\alpha,c} d(I,E)^\alpha$.
		\item $\esssup_I d_E^{\,\alpha} \gtrsim_\alpha L_1(E,I)^\alpha$.
		\item If $\bar{I} \cap \bar{E} \ne \emptyset$, then $\esssup_I d_E^{\,\alpha} \le L_1(E,I)^\alpha$.
	\end{enumerate}
\end{lemma}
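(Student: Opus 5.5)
The five parts are elementary estimates on the one-dimensional distance function, so I would prove each directly from two pointwise facts. First, $d_E$ is $1$-Lipschitz. Second, for $x \in I$ we have $d(I,E) \le d_E(x) \le d(I,E) + |I|$. The second fact holds because a point of $E$ within $d(I,E)+\epsilon$ of some point of $I$ is within $d(I,E)+|I|+\epsilon$ of any point of $I$. Throughout, $0<\alpha<1$ is what makes $\int_0^h x^{-\alpha}\,dx = h^{1-\alpha}/(1-\alpha)$ finite.

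\emph{Part (a).} Since $I=(a,b)$ is $E$-free, every $x \in I$ satisfies $d_E(x) \ge \min(x-a,\,b-x)$. Hence
\[
\int_I d_E^{\,-\alpha} \le 2\int_0^{|I|/2} x^{-\alpha}\,dx \lesssim_\alpha |I|^{1-\alpha},
\]
which gives the first inequality after dividing by $|I|$. For the second, the middle half $\frac12 I$ has positive measure and carries $d_E \ge |I|/4$. This gives $\esssup_I d_E^{\,\alpha} \ge 4^{-\alpha}|I|^\alpha$.

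\emph{Part (b).} By the second pointwise fact, $d_E \le (1+c)|I|$ on $I$. So $d_E^{\,-\alpha} \ge (1+c)^{-\alpha}|I|^{-\alpha}$ pointwise, and both claims follow immediately.

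\emph{Part (c).} Here the second pointwise fact gives $d(I,E) \le d_E \le d(I,E)+|I| \le (1+1/c)\,d(I,E)$ on $I$. In particular $d(I,E)>0$, so $d_E$ is comparable to the constant $d(I,E)$ on all of $I$. Both the average of $d_E^{\,-\alpha}$ and the essential supremum of $d_E^{\,\alpha}$ are then comparable to the corresponding powers of $d(I,E)$.

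\emph{Part (d).} If $L_1(E,I)=0$ there is nothing to prove. Otherwise, by the fourth bullet after Definition~\ref{def_length}, for any $\ell < L_1(E,I)$ there is an $E$-free open interval $J \subset I$ with $|J| \ge \ell$. Applying (a) to $J$ gives $\esssup_I d_E^{\,\alpha} \ge \esssup_J d_E^{\,\alpha} \gtrsim_\alpha \ell^\alpha$. Letting $\ell \uparrow L_1(E,I)$ finishes (d).

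\emph{Part (e).} This is the only part needing care, and I expect it to be the main (modest) obstacle. Fix $x \in I \setminus \bar E$ and let $J=(u,v)$ be the connected component of $I \setminus \bar E$ containing $x$. By Lemma~\ref{lem_length1}(a), $|J| \le L_1(E,I)$. I then split into two cases.
\begin{itemize}
\item If $J \ne I$, then some endpoint of $J$ lies in $I$, and that endpoint lies in $\bar E$. Hence $d_E(x) \le |J| \le L_1(E,I)$.
\item If $J = I$, then $I \cap \bar E = \emptyset$. The hypothesis $\bar I \cap \bar E \ne \emptyset$ then forces an endpoint of $I$ to lie in $\bar E$. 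Hence $d_E(x) \le |I| = L_1(E,I)$.
\end{itemize}
In both cases $d_E(x) \le L_1(E,I)$. This holds for every $x \in I \setminus \bar E$, while $d_E = 0$ on $\bar E$. So the bound holds everywhere on $I$, and $\esssup_I d_E^{\,\alpha} \le L_1(E,I)^\alpha$.
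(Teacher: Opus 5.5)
Your proof is correct and follows the paper's argument in all essentials. Parts (a)--(c) rest on exactly the same pointwise bounds, $d_E(x)\ge\min(x-a,b-x)$ on an $E$-free $I=(a,b)$ and $d(I,E)\le d_E(x)\le d(I,E)+|I|$ for $x\in I$. The differences are cosmetic. You avoid invoking continuity of $d_E$ by using the middle half of $I$ in (a), reducing (d) to (a), and proving (e) as a direct pointwise bound via the component of $I\setminus\bar E$ containing $x$. The paper instead passes from $\esssup$ to $\sup$ by continuity and proves (e) by contradiction, using the $E$-free interval $(x-\ell,x+\ell)$ with $\ell>L_1(E,I)$.
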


\begin{proof}
	(a) Write $I = (a,b)$. Then $d_E(x) \ge \min(x-a,b-x)$ for all $x \in I$, so
	\[
	\int_a^b d_E^{\,-\alpha}
	\le \int_a^b \min(x-a,b-x)^{-\alpha}\,dx
	= 2 \int_0^{(b-a)/2} x^{-\alpha}\,dx
	= \frac{2}{1-\alpha} \left(\frac{b-a}{2}\right)^{1-\alpha},
	\]
	and hence $\frac{1}{b-a} \int_a^b d_E^{\,-\alpha} \le \frac{2^\alpha}{1-\alpha} (b-a)^{-\alpha}$. Let $c$ be the midpoint of $I$. Then $d_E(c) \ge \frac{1}{2}|I|$. Since $d_E$ is continuous, it follows that $\esssup_I d_E^{\,\alpha} = \sup_I d_E^{\,\alpha} \ge 2^{-\alpha} |I|^\alpha$.
	
	(b) For all $x \in I$, we have $d_E(x) \le d(I,E) + |I| \le (c+1)|I|$. Hence $\dashint_I d_E^{\,-\alpha} \ge (c+1)^{-\alpha} |I|^{-\alpha}$ and $\esssup_I d_E^{\,\alpha} \le (c+1)^\alpha |I|^\alpha$.
	
	(c) For all $x \in I$, we have $d(I,E) \le d_E(x) \le d(I,E) + |I| \le (1+c^{-1})d(I,E)$. Hence $d(I,E)^{-\alpha} \ge \dashint_I d_E^{\,-\alpha} \ge (1+c^{-1})^{-\alpha} d(I,E)^{-\alpha}$ and $d(I,E)^\alpha \le \essinf_I d_E^{\,\alpha} \le \esssup_I d_E^{\,\alpha} \le (1+c^{-1})^\alpha d(I,E)^\alpha$.
	
	(d) Assume $L_1(E,I)>0$. Let $0<\ell<L_1(E,I)$. Then there exists $J \subset I$ such that $J \cap E = \emptyset$ and $|J| > \ell$. Let $x$ be the midpoint of $J$. Then $d_E(x) \ge \frac{1}{2}|J| > \frac{1}{2}\ell$. Since $d_E$ is continuous, it follows that $\esssup_I d_E^{\,\alpha} = \sup_I d_E^{\,\alpha} > 2^{-\alpha} \ell^\alpha$, so $\esssup_I d_E^{\,\alpha} \ge 2^{-\alpha} L_1(E,I)^\alpha$.
	
	(e) Suppose $\esssup_I d_E^{\,\alpha} > L_1(E,I)^\alpha$. Then, since $d_E$ is continuous, we have $\sup_I d_E^{\,\alpha} > L_1(E,I)^\alpha$, so there exists $x \in I$ such that $d_E(x)^\alpha > L_1(E,I)^\alpha$, i.e. $d_E(x) > L_1(E,I)$. Pick $\ell$ such that $d_E(x) > \ell > L_1(E,I)$. Then, since $d_E(x) > \ell$, we have $(x-\ell,x+\ell) \cap E = \emptyset$. But $\ell > L_1(E,I)$, so $(x-\ell,x) \not\subset I$ and $(x,x+\ell) \not\subset I$. Since $x \in I$, this implies that $\bar{I} \subset (x-\ell,x+\ell)$. But then $\bar{I} \cap \bar{E} = \emptyset$, which is a contradiction.
\end{proof}

\begin{lemma}
	\label{lem_exp2}
	Let $\emptyset \ne E \subset \R$ and let $\alpha>0$. Suppose $\alpha < \Mu_1^+(E)$ (resp.\ $\alpha < \Mu_\infty^+(E)$). Then $|\bar{E}|=0$ and $d_E^{\,-\alpha} \in L^1_\mathrm{loc}$. Moreover, for $s=1$ (resp.\ for some $0<s<1$), there exists $K>0$ such that, for every $I\subset\R$ centred at a point of $E$, we have
	\begin{equation}
		\label{eq_exp2a}
		\dashint_{I^-} d_E^{\,-\alpha} \le K L_s(E,I^+)^{-\alpha}.
	\end{equation}
\end{lemma}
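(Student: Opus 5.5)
Since $\alpha<\Mu_1^+(E)$ (resp.\ $\alpha<\Mu_\infty^+(E)$) forces the exponent to be positive, the degenerate branch of Definition \ref{def_exp2} cannot occur. Hence $L_1(E,I)>0$ (resp.\ $L_s(E,I)>0$ for every $0<s<1$) for all $I\subset\R$. The plan is to fix $\beta$ with $\alpha<\beta<\Mu_1^+(E)$ (resp.\ $\Mu_\infty^+(E)$), together with the associated $C>0$ (and, in the $\Mu_\infty^+$ case, the associated $0<s<1$), so that \eqref{eq_exp1a} (resp.\ \eqref{eq_exp1b}) holds with exponent $\beta$. We then argue in three steps: first $|\bar E|=0$, then \eqref{eq_exp2a} by a layer-cake computation, and finally local integrability.

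\textbf{$|\bar E|=0$.} In the $\Mu_\infty^+$ case this is immediate from Lemma \ref{lem_length2} ((c)$\Rightarrow$(a)), since $L_s(E,I)>0$ for all $I$. In the $\Mu_1^+$ case, Remark \ref{rmk_length} shows that positivity of $L_1$ alone is not enough, so we use \eqref{eq_exp1a} directly. Take $x\in E$ and $I$ centred at $x$. Because $\bar E\subset E_r$ for every $r>0$,
\[
|I^-\cap\bar E|\le |I^-\cap E_r|\le C|I^-|\,(r/L_1(E,I^+))^\beta\to0 \quad (r\to0).
\]
Thus $\bar E$ has no mass immediately to the left of any point of $E$. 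If $|\bar E|>0$, pick a density point $y$ of $\bar E$, choose $x\in E$ with $x>y$ very close to $y$ (possible by density), and take $I$ centred at $x$ with $I^-\ni y$ and $|I^-|\approx 2|x-y|$. This contradicts the vanishing just obtained, provided one checks that the relevant portion of $\bar E$ lies inside $I^-$. This is the point to verify carefully; alternatively one can argue at $x\in E$ itself, since $E$ is dense in $\bar E$.

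\textbf{Estimate \eqref{eq_exp2a}.} Let $I$ be centred at a point of $E$ and write $L=L_s(E,I^+)>0$. By the layer-cake formula,
\[
\int_{I^-}d_E^{\,-\alpha}=\int_0^\infty|\{x\in I^-:d_E(x)^{-\alpha}>\lambda\}|\,d\lambda .
\]
Split the $\lambda$-integral at $\lambda=L^{-\alpha}$. The part with $\lambda\le L^{-\alpha}$ contributes at most $|I^-|L^{-\alpha}$. For $\lambda>L^{-\alpha}$, set $r=\lambda^{-1/\alpha}<L$. The level set is $I^-\cap E_r$ (up to the set $d_E=0$, which is null), so by \eqref{eq_exp1b} its measure is at most $C|I^-|(\lambda^{-1/\alpha}/L)^{\beta}$. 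Integrating over $\lambda>L^{-\alpha}$ converges because $\beta/\alpha>1$, and gives $\lesssim |I^-|L^{-\beta}\,(L^{-\alpha})^{1-\beta/\alpha}=|I^-|L^{-\alpha}$. Together these yield \eqref{eq_exp2a} with $K=1+C\alpha/(\beta-\alpha)$.

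\textbf{$d_E^{\,-\alpha}\in L^1_{\rm loc}$.} Estimate \eqref{eq_exp2a} already shows that $d_E^{\,-\alpha}$ is integrable on the left half $I^-$ of every interval $I$ centred at a point of $E$. On any interval disjoint from $\bar E$, or at positive distance from it, $d_E^{\,-\alpha}$ is bounded by Lemma \ref{lem_dist2}. So it suffices to cover every compact set by finitely many such pieces.

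If $\sup E=\infty$, then every bounded interval $(a,b)$ lies in $I^-$ for some $I$ centred at a point $x\in E$ with $x>b$, and we are done. If $\sup E=M<\infty$, the points to the left of $M$ are handled the same way, using $x_n\in E$ with $x_n\uparrow M$. The difficulty is that the bound \eqref{eq_exp2a} must be uniform in $n$. Fix $R$ and take $I_n$ centred at $x_n$ with left endpoint $-R$. Then $L_s(E,I_n^+)$ is bounded below uniformly, since $I_n^+$ contains an $E$-free piece of definite length to the right of $M$ once $x_n$ is close to $M$. Hence the integrals $\int_{I_n^-}d_E^{\,-\alpha}$ are uniformly bounded, and monotone convergence gives integrability on $(-R,M)$. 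To the right of $M$ the function is $\le (x-M)^{-\alpha}$ with $\alpha<1$ (note $\alpha<\beta\le1$ by Lemma \ref{lem_exp1}), which is integrable near $M$.

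The main obstacles are this uniformity in the case $\sup E<\infty$ and the density argument giving $|\bar E|=0$ in the $\Mu_1^+$ case.
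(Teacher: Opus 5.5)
Your proposal is correct and follows essentially the same route as the paper. Both arguments use the vanishing of $|I^-\cap\bar E|$ for $I$ centred at points of $E$, a level-set estimate for \eqref{eq_exp2a} (your continuous layer-cake computation is the paper's decomposition into shells $E_{2^i}\setminus E_{2^{i-1}}$ in another form), and a uniform bound along $x_n\uparrow\sup E$ followed by monotone convergence, with $\alpha<1$ handling the region to the right of $\sup E$. Your deviations are minor: you use Lemma \ref{lem_length2} as a shortcut for $|\bar E|=0$ in the $\Mu_\infty^+$ case, and a density-point argument in place of the paper's covering of $(-\infty,\sup E)$ by intervals $(a,b)$ with $b\in E$. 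The step you flag is immediate, since $I^-$ is an open interval containing the density point $y$; likewise, for $s<1$ the $E$-free piece $(M,2x_n+R)$ covers all but $M-x_n$ of $I_n^+$, so it satisfies the $(1-s)$-proportion requirement and not just a length bound.
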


\begin{proof}
	There exists $\beta>\alpha$ such that, for $s=1$ (resp.\ for some $0<s<1$) and some $C>0$, every $I\subset\R$ satisfies $L_s(E,I)>0$, and every $I\subset\R$ centred at a point of $E$ satisfies
	\begin{equation}
		\label{eq_exp2b}
		\frac{|I^- \cap E_r|}{|I^-|}
		\le C \left(\frac{r}{L_s(E,I^+)}\right)^\beta
		\qquad \text{for all} ~ 0 < r \le L_s(E,I^+).
	\end{equation}
	
	First, we show that $|\bar{E}|=0$. Suppose $I\subset\R$ is centred at a point of $E$. Since $\bar{E} \subset E_r$ for all $r>0$, we may replace $E_r$ with $\bar{E}$ in \eqref{eq_exp2b}. Then, by letting $r \to 0$, we get $|I^- \cap \bar{E}|=0$. Thus, we have shown that
	\begin{equation}
		\label{eq_exp2c}
		|(a,b) \cap \bar{E}| = 0
		\qquad \text{for all}~b \in E~\text{and}~a<b.
	\end{equation}
	
	\underline{Case 1}: Suppose $\sup E = x < \infty$. For each $n \in \Z_{>0}$, there exists $x_n \in E$ such that $x_n > x - \frac{1}{n}$. Let $I_n = (x-n,x_n)$. By \eqref{eq_exp2c}, we have $|I_n \cap \bar{E}|=0$. Since $\bigcup_{n=1}^\infty I_n = (-\infty,x)$, this implies that $|(-\infty,x) \cap \bar{E}|=0$. Since $\bar{E} \subset (-\infty,x]$, it follows that $|\bar{E}|=0$.
	
	\underline{Case 2}: Suppose $\sup E = \infty$. For each $n \in \Z_{>0}$, there exists $x_n \in E$ such that $x_n > n$. Let $I_n = (-n,x_n)$. By \eqref{eq_exp2c}, we have $|I_n \cap \bar{E}|=0$. Since $\bigcup_{n=1}^\infty I_n = \R$, it follows that $|\bar{E}|=0$.
	
	Now, we show that $d_E^{\,-\alpha} \in L^1_\mathrm{loc}$. Suppose $I\subset\R$ is centred at a point of $E$. Since $L_s(E,I^+) > 0$, there is a unique $j \in \Z$ such that
	\begin{equation}
		\label{eq_exp2d}
		2^j \le L_s(E,I^+) < 2^{j+1}.
	\end{equation}
	Since $\bar{E} = \{d_E=0\}$, we have
	\[
	\R = \bar{E} \sqcup \Bigl(\bigsqcup_{-\infty < i \le j} (E_{2^i} \setminus E_{2^{i-1}})\Bigr) \sqcup (\R \setminus E_{2^j}).
	\]
	Since $|\bar{E}|=0$, it follows that
	\begin{align}
		\label{eq_exp2e}
		\int_{I^-} d_E^{\,-\alpha}
		&= \sum_{-\infty < i \le j} \int_{I^- \cap (E_{2^i} \setminus E_{2^{i-1}})} d_E^{\,-\alpha} + \int_{I^- \setminus E_{2^j}} d_E^{\,-\alpha} \\
		\nonumber &\le \sum_{-\infty < i \le j} (2^{i-1})^{-\alpha} |I^- \cap E_{2^i}| + (2^j)^{-\alpha} |I^-| \\
		\nonumber &\le |I^-| \Bigl(CL_s(E,I^+)^{-\beta} \sum_{-\infty < i \le j} (2^{i-1})^{-\alpha} (2^i)^\beta + (2^j)^{-\alpha}\Bigr) \\
		\nonumber &= 2^\alpha |I^-| \left(C L_s(E,I^+)^{-\beta} \frac{(2^j)^{\beta-\alpha}}{1-2^{\alpha-\beta}} + (2^{j+1})^{-\alpha}\right) \\
		\nonumber &\le 2^\alpha |I^-| L_s(E,I^+)^{-\alpha} \left(\frac{C}{1-2^{\alpha-\beta}}+1\right).
	\end{align}
	Here the first inequality follows from the definition of $E_r$, the second inequality follows from \eqref{eq_exp2b}, and the third inequality follows from \eqref{eq_exp2d}. In particular, we have shown that \eqref{eq_exp2a} holds, with a constant $K$ depending only on $\alpha$, $\beta$ and $C$.
	
	\underline{Case 1}: Suppose $\sup E = x < \infty$. For each $n \in \Z_{>0}$, there exists $x_n \in E$ such that $x_n > x - 2^{-n-1}$. We may assume $x_1 \le x_2 \le \cdots$. Given $k \in \Z_{>0}$, let $I_k = (x-k, x+k)$ and $I_{k,n} = (x_n - (k-2^{-n}), x_n + (k-2^{-n}))$. It is easy to check that $I_{k,n}^- \uparrow I_k^-$ as $n \to \infty$. For all sufficiently large $n$, we have $|I_{k,n}^+ \cap (x,\infty)| \ge \frac{k}{2}$ and $|I_{k,n}^+ \cap (x,\infty)| \ge (1-s)|I_{k,n}^+|$, so $L_s(E,I_{k,n}^+) \ge \frac{k}{2}$. Plugging this and $|I_{k,n}^-| \le k$ into \eqref{eq_exp2e}, we get
	\[
	\int_{I_{k,n}^-} d_E^{\,-\alpha} \lesssim_{\alpha,\beta,C,k} 1.
	\]
	Letting $n \to \infty$, we get
	\[
	\int_{I_k^-} d_E^{\,-\alpha} \lesssim_{\alpha,\beta,C,k} 1.
	\]
	In particular, $\int_{x-k}^x d_E^{\,-\alpha} < \infty$. On the other hand, $d_E(y) \ge y-x$ for all $y>x$, and $\alpha<1$ by Lemma \ref{lem_exp1}(a) or (b), so
	\[
	\int_x^{x+k} d_E^{\,-\alpha}
	\le \int_x^{x+k} (y-x)^{-\alpha} \,dy
	= \int_0^k y^{-\alpha} \,dy
	<\infty.
	\]
	Thus, $\int_{x-k}^{x+k} d_E^{\,-\alpha} < \infty$ for all $k \in \Z_{>0}$.
	
	\underline{Case 2}: Suppose $\sup E = \infty$. For each $n \in \Z_{>0}$, there exists $x_n \in E$ such that $x_n > n$. Note that $\int_{-n}^n d_E^{\,-\alpha} \le \int_{-n}^{x_n} d_E^{\,-\alpha} < \infty$ by \eqref{eq_exp2e}. Thus, $\int_{-n}^n d_E^{\,-\alpha} < \infty$ for all $n \in \Z_{>0}$.
\end{proof}

Now, we are ready to prove Theorems \ref{thm_exp1} and \ref{thm_exp2}.

\begin{proof}[Proof of Theorem \ref{thm_exp1}]
	($\implies$) Suppose $d_E^{\,-\alpha} \in A_1^+$. In particular, $d_E^{\,-\alpha} \in L^1_\mathrm{loc}$, so $\alpha<1$ by Lemma \ref{lem_exp1}(c). Moreover, $d_E>0$ a.e., so $|\bar{E}|=0$. By Lemma \ref{lem_length2}, $L_1(E,I)>0$ for all $I\subset\R$. Suppose $I\subset\R$ is centred at a point of $E$ and let $r>0$ be given. Then
	\[
	\dashint_{I^-} d_E^{\,-\alpha}
	\ge \frac{1}{|I^-|} \int_{I^- \cap E_r} d_E^{\,-\alpha}
	\ge \frac{|I^- \cap E_r|}{|I^-|} r^{-\alpha}.
	\]
	By Lemma \ref{lem_dist2}(d),
	\[
	\esssup_{I^+} d_E^{\,\alpha} \gtrsim_\alpha L_1(E,I^+)^\alpha.
	\]
	Hence
	\[
	\frac{|I^- \cap E_r|}{|I^-|} \left(\frac{L_1(E,I^+)}{r}\right)^\alpha
	\lesssim_\alpha [d_E^{\,-\alpha}]_{A_1^+}.
	\]
	Thus, $\alpha \le \Mu_1^+(E)$. By Proposition \ref{prop_ApBMO}(g), there exists $\delta>0$ such that $d_E^{\,-\alpha(1+\delta)} \in A_1^+$. By the same reasoning, $\alpha(1+\delta) \le \Mu_1^+(E)$, so $\alpha < \Mu_1^+(E)$.
	
	($\impliedby$) Suppose $\alpha < \Mu_1^+(E)$. By Lemma \ref{lem_exp1}(a), $\alpha<1$. By Lemma \ref{lem_exp2}, $d_E^{\,-\alpha} \in L^1_\mathrm{loc}$. Since $d_E < \infty$ on $\R$, we have $d_E^{\,-\alpha} > 0$ on $\R$, so $d_E^{\,-\alpha}$ is a weight. We want to show that $d_E^{\,-\alpha} \in A_1^+$, i.e., for all $I\subset\R$,
	\begin{equation}
		\label{eq_exp3a}
		\left(\dashint_{I^-} d_E^{\,-\alpha}\right) \esssup_{I^+} d_E^{\,\alpha}
		\lesssim 1,
	\end{equation}
	with a constant independent of $I$. Given $I\subset\R$, write $I^- = (a,b)$ and $I^+ = (b,c)$. We consider three cases.
	
	\underline{Case 1}: Suppose $b \in E$. By Lemma \ref{lem_exp2},
	\[
	\dashint_{I^-} d_E^{\,-\alpha}
	\lesssim_\alpha L_1(E,I^+)^{-\alpha}.
	\]
	By Lemma \ref{lem_dist2}(e),
	\[
	\esssup_{I^+} d_E^{\,\alpha}
	\le L_1(E,I^+)^\alpha.
	\]
	Hence \eqref{eq_exp3a} holds.
	
	\underline{Case 2}: Suppose $I^- \cap E \ne \emptyset$. Then $\overline{I^- \cap E}$ is a nonempty compact set, so it has a maximum $x$. Note that $a < x \le b$ and let $J$ be the unique interval such that $J^- = (a,x)$. For every $\epsilon>0$, there exists $y \in I^- \cap E$ such that $x-\epsilon < y \le x$. Let $J_y$ be the unique interval such that $J_y^- = (a,y)$. By Case 1, \eqref{eq_exp3a} holds with $J_y$ in place of $I$. Letting $\epsilon \to 0$ (so that $y \to x$) and using the fact that $d_E^{\,-\alpha} \in L^1_\mathrm{loc}$ and $d_E^{\,\alpha}$ is continuous, we get
	\begin{equation}
		\label{eq_exp3b}
		\left(\dashint_{J^-} d_E^{\,-\alpha}\right) \esssup_{J^+} d_E^{\,\alpha}
		\lesssim 1.
	\end{equation}
	If $x = b$ (so that $J = I$), we are done, so we assume $x < b$. Note that $(x,b) \cap E = \emptyset$ by our choice of $x$. Write $J^+ = (x,y)$ and note that $y<c$.
	
	\underline{Subcase 2a}: Suppose $J^+ \subset I^-$ (i.e.\ $y \le b$). By Lemma \ref{lem_dist2},
	\begin{enumerate}[label=\upshape(\roman*)]
		\item $\dashint_{(x,b)} d_E^{\,-\alpha} \lesssim_\alpha (b-x)^{-\alpha}$ (since $(x,b) \cap E = \emptyset$);
		\item $\dashint_{J^-} d_E^{\,-\alpha} \gtrsim_\alpha |J^-|^{-\alpha}$ (since $x \in \bar{E}$);
		\item $\esssup_{J^+} d_E^{\,\alpha} \gtrsim_\alpha |J^+|^\alpha$ (since $J^+ \cap E = \emptyset$);
		\item $\esssup_{I^+} d_E^{\,\alpha} \lesssim_\alpha |I^+|^\alpha$ (since $I^- \cap E \ne \emptyset$).
	\end{enumerate}
	We estimate
	\begin{align*}
		\tint_{I^-} d_E^{\,-\alpha}
		&= \tint_{J^-} d_E^{\,-\alpha} + \tint_x^b d_E^{\,-\alpha} \\
		&\lesssim \tint_{J^-} d_E^{\,-\alpha} + (b-x)^{1-\alpha} \\
		&\le \tint_{J^-} d_E^{\,-\alpha} + |I^-|^{1-\alpha} \\
		&\lesssim \tint_{J^-} d_E^{\,-\alpha} + |I^-|^{1-\alpha} |J^-|^{\alpha-1} \tint_{J^-} d_E^{\,-\alpha} \\
		&\lesssim (|I^-|/|J^-|)^{1-\alpha} \tint_{J^-} d_E^{\,-\alpha},
	\end{align*}
	where the first inequality follows from (i), the second inequality follows from $b-x \le |I^-|$, the third inequality follows from (ii), and the fourth inequality follows from $|J^-| \le |I^-|$. Hence
	\[
	\tdashint_{I^-} d_E^{\,-\alpha}
	\lesssim (|J^-|/|I^-|)^\alpha \, \dashint_{J^-} d_E^{\,-\alpha}.
	\]
	On the other hand, by (iv) and (iii), we have
	\[
	\tesssup_{I^+} d_E^{\,\alpha}
	\lesssim (|I^+|/|J^+|)^\alpha \tesssup_{J^+} d_E^{\,\alpha}.
	\]
	Together with \eqref{eq_exp3b}, the last two inequalities imply \eqref{eq_exp3a}.
	
	\underline{Subcase 2b}: Suppose $J^+ \not\subset I^-$ (i.e.\ $y>b$). By Lemma \ref{lem_dist2},
	\begin{enumerate}[label=\upshape(\roman*)]
		\item $\dashint_{(x,b)} d_E^{\,-\alpha} \lesssim_\alpha (b-x)^{-\alpha}$ (since $(x,b) \cap E = \emptyset$);
		\item $\esssup_{J^+} d_E^{\,\alpha} \gtrsim_\alpha L_1(E,J^+)^\alpha$;
		\item $\esssup_{J^+} d_E^{\,\alpha} \lesssim_\alpha |J^+|^\alpha$ (since $x \in \bar{E}$);
		\item $\esssup_{I^+} d_E^{\,\alpha} \lesssim_\alpha |I^+|^\alpha$ (since $I^- \cap E \ne \emptyset$).
	\end{enumerate}
	Also note that
	\begin{enumerate}[label=\upshape(\roman*), resume]
		\item $b-x \le L_1(E,J^+)$ (since $(x,b) \subset J^+ \setminus E$).
	\end{enumerate}
	We estimate
	\begin{align*}
		\tint_{I^-} d_E^{\,-\alpha}
		&= \tint_{J^-} d_E^{\,-\alpha} + \tint_x^b d_E^{\,-\alpha} \\
		&\lesssim \tint_{J^-} d_E^{\,-\alpha} + (b-x)^{1-\alpha} \\
		&\le \tint_{J^-} d_E^{\,-\alpha} + |I^-|^{1-\alpha},
	\end{align*}
	where the first inequality follows from (i) and the second inequality follows from $b-x \le |I^-|$. Since $|J^-| \le |I^-| \le 2|J^-|$, it follows that
	\begin{equation}
		\label{eq_exp3c}
		\tdashint_{I^-} d_E^{\,-\alpha}
		\lesssim \tdashint_{J^-} d_E^{\,-\alpha} + |I^-|^{-\alpha}.
	\end{equation}
	
	First, suppose $b-x \ge \frac{1}{4}|I^-|$. Then
	\begin{align*}
		(\tdashint_{I^-} d_E^{\,-\alpha}) \tesssup_{I^+} d_E^{\,\alpha}
		&\lesssim (\tdashint_{J^-} d_E^{\,-\alpha} + |I^-|^{-\alpha}) |I^+|^\alpha \\
		&= (\tdashint_{J^-} d_E^{\,-\alpha}) |I^+|^\alpha + 1 \\
		&\lesssim (\tdashint_{J^-} d_E^{\,-\alpha}) L_1(E,J^+)^\alpha + 1 \\
		&\lesssim (\tdashint_{J^-} d_E^{\,-\alpha}) \tesssup_{J^+} d_E^{\,\alpha} + 1 \\
		&\lesssim 1,
	\end{align*}
	where the first inequality follows from \eqref{eq_exp3c} and (iv), the second inequality follows from $|I^-| \lesssim b-x$ and (v), the third inequality follows from (ii), and the fourth inequality follows from \eqref{eq_exp3b}.
	
	Now, suppose $b-x \le \frac{1}{4}|I^-|$. Then, since $d_E$ is $1$-Lipschitz, we have
	\[
	\tesssup_{I^+} d_E \le \tesssup_{J^+} d_E + (c-y),
	\]
	so
	\begin{align}
		\label{eq_exp3d}
		\tesssup_{I^+} d_E^{\,\alpha}
		&\le \tesssup_{J^+} d_E^{\,\alpha} + (c-y)^\alpha \\
		\nonumber &\lesssim \tesssup_{J^+} d_E^{\,\alpha} + L_1(E,J^+)^\alpha \\
		\nonumber &\lesssim \tesssup_{J^+} d_E^{\,\alpha},
	\end{align}
	where the first inequality follows from the fact that $(\xi + \eta)^\alpha \le \xi^\alpha + \eta^\alpha$ for all $\xi,\eta\ge0$ (since $0<\alpha<1$), the second inequality follows from $c-y=2(b-x)$ and (v), and the third inequality follows from (ii). Consequently,
	\begin{align*}
		(\tdashint_{I^-} d_E^{\,-\alpha}) \tesssup_{I^+} d_E^{\,\alpha}
		&\lesssim (\tdashint_{J^-} d_E^{\,-\alpha} + |I^-|^{-\alpha}) \tesssup_{J^+} d_E^{\,\alpha} \\
		&\lesssim 1 + |I^-|^{-\alpha} \tesssup_{J^+} d_E^{\,\alpha} \\
		&\lesssim 1 + |I^-|^{-\alpha} |I^+|^\alpha \\
		&\lesssim 1,
	\end{align*}
	where the first inequality follows from \eqref{eq_exp3c} and \eqref{eq_exp3d}, the second inequality follows from \eqref{eq_exp3b}, and the third inequality follows from (iii) and $|J^+| \le |I^+|$.
	
	\underline{Case 3}: Suppose $I^- \cap E = \emptyset$.
	
	\underline{Subcase 3a}: Suppose $d(I^+,E) \le 2|I^+|$. By Lemma \ref{lem_dist2}, we have $\dashint_{I^-} d_E^{\,-\alpha} \lesssim_\alpha |I^-|^{-\alpha}$ and $\esssup_{I^+} d_E^{\,\alpha} \lesssim_\alpha |I^+|^\alpha$, so \eqref{eq_exp3a} holds.
	
	\underline{Subcase 3b}: Suppose $d(I^+,E) \ge 2|I^+|$. Since $|d(I^-,E)-d(I^+,E)| \le \frac{1}{2}|I|$, we have $d(I^-,E) \ge |I^-|$. By Lemma \ref{lem_dist2}, $\dashint_{I^-} d_E^{\,-\alpha} \approx_\alpha d(I^-,E)^{-\alpha}$ and $\esssup_{I^+} d_E^{\,\alpha} \approx_\alpha d(I^+,E)^\alpha$. Moreover, $d(I^-,E) \ge \frac{1}{2} d(I^+,E)$, so $d(I^-,E)^{-\alpha} \lesssim d(I^+,E)^{-\alpha}$. Hence \eqref{eq_exp3a} holds.
\end{proof}

\begin{proof}[Proof of Theorem \ref{thm_exp2}]
	($\implies$) Suppose $d_E^{\,-\alpha} \in A_p^+$ for some $1<p<\infty$. In particular, $d_E^{\,-\alpha} \in L^1_\mathrm{loc}$, so $\alpha<1$ by Lemma \ref{lem_exp1}(c). Moreover, $d_E>0$ a.e., so $|\bar{E}|=0$. Fix $0<s<1$. By Lemma \ref{lem_length2}, $L_s(E,I) > 0$ for all $I\subset\R$. Suppose $I\subset\R$ is centred at a point of $E$ and let $r>0$ be given. Then
	\[
	\dashint_{I^-} d_E^{\,-\alpha}
	\ge \frac{1}{|I^-|} \int_{I^- \cap E_r} d_E^{\,-\alpha}
	\ge \frac{|I^- \cap E_r|}{|I^-|} r^{-\alpha}.
	\]
	By Lemma \ref{lem_dist1}(c),
	\[
	L_s(E,I^+)^\alpha
	\lesssim_{\alpha,p,s} \left(\dashint_{I^+} d_E^{\,\alpha/(p-1)}\right)^{p-1}.
	\]
	Hence
	\[
	\frac{|I^- \cap E_r|}{|I^-|} \left(\frac{L_s(E,I^+)}{r}\right)^\alpha
	\lesssim_{\alpha,p,s} [d_E^{\,-\alpha}]_{A_p^+}.
	\]
	Thus, $\alpha \le \Mu_\infty^+(E)$. By Proposition \ref{prop_ApBMO}(a), (g) and (b), there exist $\delta>0$ and $1<q<\infty$ such that $d_E^{\,-\alpha(1+\delta)} \in A_q^+$. By the same reasoning, $\alpha(1+\delta) \le \Mu_\infty^+(E)$, so $\alpha < \Mu_\infty^+(E)$.
	
	($\impliedby$) Suppose $\alpha < \Mu_\infty^+(E)$. Fix $\beta$ such that $\alpha < \beta < \Mu_\infty^+(E)$. By Lemma \ref{lem_exp1}(b), $\beta<1$. By Lemma \ref{lem_exp2}, $|\bar{E}|=0$ and $d_E^{\,-\alpha}, d_E^{\,-\beta} \in L^1_\mathrm{loc}$. Since $d_E < \infty$ on $\R$, we have $d_E^{\,-\alpha} > 0$ on $\R$, so $d_E^{\,-\alpha}$ is a weight. We want to show that $d_E^{\,-\alpha} \in A_p^+$ for some $1<p<\infty$. By Proposition \ref{prop_ApBMO}(e) and (b), it suffices to show that, for all $I\subset\R$,
	\begin{equation}
		\label{eq_exp4a}
		\dashint_{I^-} d_E^{\,-\beta}
		\lesssim \left(\dashint_{I^+} d_E^{\,-\alpha}\right)^{\beta/\alpha},
	\end{equation}
	with a constant independent of $I$. Given $I\subset\R$, write $I^- = (a,b)$ and $I^+ = (b,c)$. We consider three cases.
	
	\underline{Case 1}: Suppose $b \in E$. By Lemma \ref{lem_exp2}, there exists $0<s<1$ such that
	\[
	\dashint_{I^-} d_E^{\,-\beta}
	\lesssim_{\beta,s} L_s(E,I^+)^{-\beta}.
	\]
	By Lemma \ref{lem_dist1}(c),
	\[
	L_s(E,I^+)^{-\alpha}
	\lesssim_{\alpha,s} \dashint_{I^+} d_E^{\,-\alpha}.
	\]
	Hence \eqref{eq_exp4a} holds.
	
	\underline{Case 2}: Suppose $I^- \cap E \ne \emptyset$. Then $\overline{I^- \cap E}$ is a nonempty compact set, so it has a maximum $x$. Note that $a < x \le b$ and let $J$ be the unique interval such that $J^- = (a,x)$. For every $\epsilon>0$, there exists $y \in I^- \cap E$ such that $x-\epsilon < y \le x$. Let $J_y$ be the unique interval such that $J_y^- = (a,y)$. By Case 1, \eqref{eq_exp4a} holds with $J_y$ in place of $I$. Letting $\epsilon \to 0$ (so that $y \to x$) and using the fact that $d_E^{\,-\alpha}, d_E^{\,-\beta} \in L^1_\mathrm{loc}$, we get
	\begin{equation}
		\label{eq_exp4b}
		\dashint_{J^-} d_E^{\,-\beta}
		\lesssim \left(\dashint_{J^+} d_E^{\,-\alpha}\right)^{\beta/\alpha}.
	\end{equation}
	If $x = b$ (so that $J = I$), we are done, so we assume $x < b$. Note that $(x,b) \cap E = \emptyset$ by our choice of $x$. Write $J^+ = (x,y)$ and note that $y<c$.
	
	\underline{Subcase 2a}: Suppose $J^+ \subset I^-$ (i.e.\ $y \le b$). By Lemma \ref{lem_dist2},
	\begin{enumerate}[label=\upshape(\roman*)]
		\item $\dashint_{(x,b)} d_E^{\,-\beta} \lesssim_\beta (b-x)^{-\beta}$ (since $(x,b) \cap E = \emptyset$);
		\item $\dashint_{J^+} d_E^{\,-\alpha} \lesssim_\alpha |J^+|^{-\alpha}$ (since $J^+ \cap E = \emptyset$);
		\item $\dashint_{I^+} d_E^{\,-\alpha} \gtrsim_\alpha |I^+|^{-\alpha}$ (since $I^- \cap E \ne \emptyset$).
	\end{enumerate}
	Hence
	\begin{align*}
		\tint_{I^-} d_E^{\,-\beta}
		&= \tint_{J^-} d_E^{\,-\beta} + \int_x^b d_E^{\,-\beta} \\
		&\lesssim |J^-| (\tdashint_{J^+} d_E^{\,-\alpha})^{\beta/\alpha} + (b-x)^{1-\beta} \\
		&\lesssim |J^-| |J^+|^{-\beta} + |I^-|^{1-\beta} \\
		&\lesssim |I^-| |I^+|^{-\beta} \\
		&\lesssim |I^-| (\tdashint_{I^+} d_E^{\,-\alpha})^{\beta/\alpha}.
	\end{align*}
	where the first inequality follows from \eqref{eq_exp4b} and (i), the second inequality follows from (ii) and $b-x \le |I^-|$, the third inequality follows from $|J^-| \le |I^-|$, and the fourth inequality follows from (iii). Thus, \eqref{eq_exp4a} holds.
	
	\underline{Subcase 2b}: Suppose $J^+ \not\subset I^-$ (i.e. $y>b$). By Lemma \ref{lem_dist2},
	\begin{enumerate}[label=\upshape(\roman*)]
		\item $\dashint_{(x,b)} d_E^{\,-\alpha} \lesssim_\alpha (b-x)^{-\alpha}$ and $\dashint_{(x,b)} d_E^{\,-\beta} \lesssim_\beta (b-x)^{-\beta}$ (since $(x,b) \cap E = \emptyset$);
		\item $\dashint_{J^-} d_E^{\,-\beta} \gtrsim_\beta |J^-|^{-\beta}$ (since $x \in \bar{E}$);
		\item $\dashint_{I^+} d_E^{\,-\alpha} \gtrsim_\alpha |I^+|^{-\alpha}$ (since $I^- \cap E \ne \emptyset$).
	\end{enumerate}
	We estimate
	\begin{align*}
		\tint_{I^-} d_E^{\,-\beta}
		&= \tint_{J^-} d_E^{\,-\beta} + \tint_x^b d_E^{\,-\beta} \\
		&\lesssim \tint_{J^-} d_E^{\,-\beta} + (b-x)^{1-\beta} \\
		&\le \tint_{J^-} d_E^{\,-\beta} + |J^-|^{1-\beta} \\
		&\lesssim \tint_{J^-} d_E^{\,-\beta},
	\end{align*}
	where the first inequality follows from (i), the second inequality follows from $b-x \le |J^+|$, and the third inequality follows from (ii). On the other hand,
	\begin{align*}
		\tint_{J^+} d_E^{\,-\alpha}
		&= \tint_x^b d_E^{\,-\alpha} + \tint_b^y d_E^{\,-\alpha} \\
		&\lesssim (b-x)^{1-\alpha} + \tint_b^y d_E^{\,-\alpha} \\
		&\le |I^+|^{1-\alpha} + \tint_{I^+} d_E^{\,-\alpha} \\
		&\lesssim \tint_{I^+} d_E^{\,-\alpha},
	\end{align*}
	where the first inequality follows from (i), the second inequality follows from $b-x \le |I^-|$ and $(b,y) \subset I^+$, and the third inequality follows from (iii). Since $|J^-| \le |I^-| \le 2|J^-|$, the last two computations imply that $\dashint_{I^-} d_E^{\,-\beta} \lesssim \dashint_{J^-} d_E^{\,-\beta}$ and $\dashint_{J^+} d_E^{\,-\alpha} \lesssim \dashint_{I^+} d_E^{\,-\alpha}$. Together with \eqref{eq_exp4b}, these two inequalities give \eqref{eq_exp4a}.
	
	\underline{Case 3}: Suppose $I^- \cap E = \emptyset$.
	
	\underline{Subcase 3a}: Suppose $d(I^+,E) \le 2|I^+|$. By Lemma \ref{lem_dist2}, we have $\dashint_{I^-} d_E^{\,-\beta} \lesssim_\beta |I^-|^{-\beta}$ and $\dashint_{I^+} d_E^{\,-\alpha} \gtrsim_\alpha |I^+|^{-\alpha}$, so \eqref{eq_exp4a} holds.
	
	\underline{Subcase 3b}: Suppose $d(I^+,E) \ge 2|I^+|$. Since $|d(I^-,E)-d(I^+,E)| \le \frac{1}{2}|I|$, we have $d(I^-,E) \ge |I^-|$. By Lemma \ref{lem_dist2}, $\dashint_{I^-} d_E^{\,-\beta} \approx_\beta d(I^-,E)^{-\beta}$ and $\dashint_{I^+} d_E^{\,-\alpha} \approx_\alpha d(I^+,E)^{-\alpha}$. Moreover, $d(I^-,E) \ge \frac{1}{2} d(I^+,E)$, so $d(I^-,E)^{-\beta} \lesssim d(I^+,E)^{-\beta}$. Hence \eqref{eq_exp4a} holds.
\end{proof}

We end this section with two corollaries of Theorems \ref{thm_exp1} and \ref{thm_exp2}.

\begin{corollary}
	\label{cor_exp1}
	Let $\emptyset \ne E \subset \R$.
	\begin{enumerate}[label=\upshape(\alph*)]
		\item $E$ is right weakly porous if and only if $\Mu_1^+(E) > 0$.
		\item $E$ is right median porous if and only if $\Mu_\infty^+(E) > 0$.
	\end{enumerate}
\end{corollary}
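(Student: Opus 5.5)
The plan is to read both equivalences off the earlier characterizations, Theorems \ref{thm_AGGM}, \ref{thm_main}, \ref{thm_exp1} and \ref{thm_exp2}. No new geometric argument should be needed.

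For (a), suppose first that $E$ is right weakly porous, i.e.\ $(s,\delta)$-right weakly porous for some $0<s<1$ and $\delta>0$. By Theorem \ref{thm_AGGM}, (c) implies (a), so there is $\alpha>0$ with $d_E^{\,-\alpha}\in A_1^+$. Theorem \ref{thm_exp1} then gives $\alpha<\Mu_1^+(E)$, and hence $\Mu_1^+(E)>\alpha>0$.

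Conversely, suppose $\Mu_1^+(E)>0$. Pick any $\alpha$ with $0<\alpha<\Mu_1^+(E)$. By the ``if'' direction of Theorem \ref{thm_exp1}, $d_E^{\,-\alpha}\in A_1^+$. Theorem \ref{thm_AGGM}, (a) implies (c), then shows that $E$ is $s$-right weakly porous for some $s$. This is exactly the statement that $E$ is right weakly porous.

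Part (b) goes the same way, with Theorem \ref{thm_main} replacing Theorem \ref{thm_AGGM} and Theorem \ref{thm_exp2} replacing Theorem \ref{thm_exp1}. If $E$ is right median porous, then by Theorem \ref{thm_main}, (c) implies (a), there exist $\alpha>0$ and $1<p<\infty$ with $d_E^{\,-\alpha}\in A_p^+$. Theorem \ref{thm_exp2} gives $0<\alpha<\Mu_\infty^+(E)$. Conversely, if $\Mu_\infty^+(E)>0$, choose $0<\alpha<\Mu_\infty^+(E)$. Theorem \ref{thm_exp2} yields $d_E^{\,-\alpha}\in A_p^+$ for some $1<p<\infty$, and Theorem \ref{thm_main}, (a) implies (c), yields right median porosity.

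There is no real obstacle here. The one point to check is that the degenerate cases in Definition \ref{def_exp2}, where the exponent is set to $0$ because some $L_s(E,I)$ vanishes, cause no trouble. They do not: all the cited theorems are stated for arbitrary nonempty $E$ and already account for these cases, since $\Mu_1^+(E)=0$ or $\Mu_\infty^+(E)=0$ is consistent with the absence of any admissible $\alpha>0$. Hence no separate case analysis is required.
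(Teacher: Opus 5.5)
Your proposal is correct and is essentially the paper's argument. For each part it chains the qualitative characterization (Theorem \ref{thm_AGGM}, resp.\ Theorem \ref{thm_main}) with the quantitative one (Theorem \ref{thm_exp1}, resp.\ Theorem \ref{thm_exp2}), using that some $\alpha>0$ satisfies $\alpha<\Mu_1^+(E)$ (resp.\ $\alpha<\Mu_\infty^+(E)$) exactly when that exponent is positive.
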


\begin{proof}
	(a) By Theorem \ref{thm_AGGM}, $E$ is right weakly porous if and only if $d_E^{\,-\alpha} \in A_1^+$ for some $\alpha>0$. By Theorem \ref{thm_exp1}, the latter statement holds if and only if $\Mu_1^+(E)>0$.
	
	(b) By Theorem \ref{thm_main}, $E$ is right median porous if and only if $d_E^{\,-\alpha} \in A_p^+$ for some $\alpha>0$ and $1<p<\infty$. By Theorem \ref{thm_exp2}, the latter statement holds if and only if $\Mu_\infty^+(E)>0$.
\end{proof}

\begin{corollary}
	\label{cor_exp2}
	Let $\emptyset \ne E \subset \R$.
	\begin{enumerate}[label=\upshape(\alph*)]
		\item $\Mu_1^+(E) \le \Mu_\infty^+(E)$.
		\item $\Mu_1(E) = \min(\Mu_1^+(E),\Mu_1^-(E))$.
		\item $\Mu_\infty(E) = \min(\Mu_\infty^+(E),\Mu_\infty^-(E))$.
	\end{enumerate}
\end{corollary}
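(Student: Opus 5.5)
The plan is to deduce all three parts from the $\alpha$-characterizations in Theorems \ref{thm_exp1}, \ref{thm_exp2}, \ref{thm_ALMV2} and \ref{thm_PU2}, together with the inclusions and intersections in Proposition \ref{prop_ApBMO}. We never compare the geometric definitions of the exponents directly. The basic mechanism is that each Muckenhoupt exponent equals the supremum of the set of $\alpha>0$ for which $d_E^{\,-\alpha}$ lies in the corresponding class. Since that set is an initial interval $(0,\Mu)$, inclusions between sets of admissible $\alpha$ translate into inequalities between the exponents. The degenerate case where an exponent is $0$ is harmless: then the set of admissible $\alpha$ is empty, and the comparisons hold trivially.

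For (a), take any $\alpha>0$ with $\alpha<\Mu_1^+(E)$. By Theorem \ref{thm_exp1}, $d_E^{\,-\alpha}\in A_1^+$. By Proposition \ref{prop_ApBMO}(a), $d_E^{\,-\alpha}\in A_p^+$ for every $1<p<\infty$. Theorem \ref{thm_exp2} then gives $\alpha<\Mu_\infty^+(E)$. Letting $\alpha\uparrow\Mu_1^+(E)$ yields $\Mu_1^+(E)\le\Mu_\infty^+(E)$.

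For (b), use Proposition \ref{prop_ApBMO}(c) with $p=1$, which gives $A_1=A_1^+\cap A_1^-$. Combined with Theorem \ref{thm_ALMV2}, Theorem \ref{thm_exp1}, and its mirror version for $A_1^-$ and $\Mu_1^-$, this gives, for every $\alpha>0$,
\[
\alpha<\Mu_1(E)\iff d_E^{\,-\alpha}\in A_1\iff \alpha<\Mu_1^+(E)\ \text{and}\ \alpha<\Mu_1^-(E).
\]
So the two intervals $(0,\Mu_1(E))$ and $(0,\min(\Mu_1^+(E),\Mu_1^-(E)))$ coincide, and hence their endpoints agree. The mirror version of Theorem \ref{thm_exp1} follows by applying the theorem to the reflected set $-E$: reflection $x\mapsto -x$ interchanges $A_p^+$ and $A_p^-$, and also interchanges $\Mu^\pm$, because $d_{-E}(x)=d_E(-x)$ and $L_s(-E,-I)=L_s(E,I)$.

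Part (c) is the only step needing a little care, and I expect it to be the main obstacle. Here the classes involve ``some $p$'', and the exponent $p$ witnessing $A_p^+$ may differ from the one witnessing $A_p^-$. The inclusion $\Mu_\infty(E)\le\min(\Mu_\infty^+(E),\Mu_\infty^-(E))$ is immediate: if $d_E^{\,-\alpha}\in A_p$, then by Proposition \ref{prop_ApBMO}(c) it lies in both $A_p^+$ and $A_p^-$, and Theorems \ref{thm_exp2} and \ref{thm_PU2} finish the argument. For the converse, suppose $\alpha<\min(\Mu_\infty^+,\Mu_\infty^-)$. Then $d_E^{\,-\alpha}\in A_p^+\cap A_q^-$ for some $p,q\in(1,\infty)$. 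Setting $r=\max(p,q)$, Proposition \ref{prop_ApBMO}(a) gives $d_E^{\,-\alpha}\in A_r^+\cap A_r^-=A_r$. Theorem \ref{thm_PU2} then gives $\alpha<\Mu_\infty(E)$. Taking suprema completes the proof, exactly as in the argument for Proposition \ref{prop_conseq1}.
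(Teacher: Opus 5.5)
Your proposal is correct and follows the paper's proof: each part is read off from the $\alpha$-characterizations (Theorems \ref{thm_exp1}, \ref{thm_exp2}, \ref{thm_ALMV2} and \ref{thm_PU2}) combined with Proposition \ref{prop_ApBMO}(a) and (c). Your treatment of (c), where you pass to $r=\max(p,q)$ because the exponents witnessing $A_p^+$ and $A_q^-$ may differ, and your reflection argument for the mirror-image theorems, just spell out details that the paper leaves as ``similar'' or ``obvious''.
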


\begin{proof}
	(a) Let $\alpha > 0$. Suppose $\alpha < \Mu_1^+(E)$. By Theorem \ref{thm_exp1}, $d_E^{\,-\alpha} \in A_1^+$. By Proposition \ref{prop_ApBMO}(a), $d_E^{\,-\alpha} \in A_p^+$ for all $1<p<\infty$. By Theorem \ref{thm_exp2}, $\alpha < \Mu_\infty^+(E)$. The desired inequality follows.
	
	(b) Let $\alpha > 0$. By Theorem \ref{thm_ALMV2}, we have $\alpha < \Mu_1(E)$ if and only if $d_E^{\,-\alpha} \in A_1$. By Proposition \ref{prop_ApBMO}(c), the latter statement holds if and only if $d_E^{\,-\alpha} \in A_1^+$ and $d_E^{\,-\alpha} \in A_1^-$. By Theorem \ref{thm_exp1} (and its obvious analogue for $A_1^-$ and $\Mu_1^-(E)$), this is equivalent to requiring that $\alpha < \Mu_1^+(E)$ and $\alpha < \Mu_1^-(E)$, i.e.\ $\alpha < \min(\Mu_1^+(E),\Mu_1^-(E))$. The desired equality follows.
	
	(c) This is similar to (b), but we use Theorems \ref{thm_PU2} and \ref{thm_exp2} instead of Theorems \ref{thm_ALMV2} and \ref{thm_exp1}.
\end{proof}

\section{Further results}
\label{sec_extra}

In this section, we prove a few additional results related to the notions of porosity considered in this paper. The first of these results (Proposition \ref{prop_extra1}) states that the property of right median porosity may be formulated in terms of a division of the interval $I$ into two subintervals which need not be of equal length (cf. Proposition \ref{prop_Ap}).

\begin{lemma}
	\label{lem_extra1}
	Suppose $E\subset\R$ is $(s,t,\delta)$-right median porous. Then, for every $r$ with $s \le r \le t$, there exists $\gamma>0$ such that, for all $I\subset\R$,
	\[
	L_r(E,I^-) \ge \gamma L_r(E,I).
	\]
\end{lemma}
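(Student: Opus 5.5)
By Lemma \ref{lem_porous1}(a) and Lemma \ref{lem_length1}(a) we may assume $E$ is closed, and $|E|=0$. The plan is to compare $L_r(E,I)$ with pore sizes in the two halves, and to use right median porosity to move information from $I^+$ to $I^-$. Two preliminary reductions will be used. First, since $r\ge s$, we have $L_r(E,I^-)\ge L_s(E,I^-)$. Second, by Theorem \ref{thm_main} ((c)$\Rightarrow$(d)), $E$ is also $(s,t',\delta')$-right median porous for $t'=\frac{1+t}{2}>t\ge r$, with some $\delta'>0$. Hence it suffices to show that, for every $I$, either $L_r(E,I^-)\gtrsim L_r(E,I)$ directly, or $L_{t'}(E,I^+)\gtrsim L_r(E,I)$. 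The slack $t'-r\ge\frac{1-t}{2}>0$ is what will absorb boundary effects.

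Fix $I$ and $0<\ell<L_r(E,I)$, and take disjoint $E$-free intervals $I_1,\dots,I_k\subset I$ with $|I_i|\ge\ell$ and $\sum|I_i|\ge(1-r)|I|$. At most one of them, say $T$, contains the midpoint of $I$; split it as $T=T^{(-)}\cup T^{(+)}$. The remaining intervals lie in $I^-$ or in $I^+$. Since $|I^-|=|I^+|$, the total covered length satisfies $(1-r)|I|=(1-r)|I^-|+(1-r)|I^+|$, so one of the halves receives covered length at least $(1-r)$ times its own length. I will split into cases according to which half this is.

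\emph{Case $I^+$.} The half $I^+$ is covered to proportion $\ge 1-r$ by full pieces of length $\ge\ell$, together with possibly $T^{(+)}$. There are two subcases.
\begin{itemize}
\item If $|T^{(+)}|\ge\frac{t'-r}{2}\ell$, then every piece in $I^+$ has length $\gtrsim\ell$.
\item Otherwise, discard $T^{(+)}$. We lose at most $\frac{t'-r}{2}\ell\le (t'-r)|I^+|$ of covered length, using $\ell\le|I|=2|I^+|$.
\end{itemize}
In either subcase the remaining pieces have length $\gtrsim_{r,t}\ell$ and cover at least $(1-t')|I^+|$. Therefore $L_{t'}(E,I^+)\gtrsim\ell$, and then
\[
L_r(E,I^-)\ge L_s(E,I^-)\ge\delta' L_{t'}(E,I^+)\gtrsim \ell .
\]

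\emph{Case $I^-$.} This is the step I expect to be the main obstacle. The target is now $L_r(E,I^-)$ itself, at the same parameter $r$, so there is no slack to absorb a short truncated piece $T^{(-)}$. I would first run the same argument with the weaker conclusion $L_{t'}(E,I^-)\gtrsim\ell$. To get back down to parameter $r$, I would try to use the porosity hypothesis once more, applied to intervals $I'$ with $I'^+=I^-$ (or to dyadic pieces of $I^-$). This would convert $L_{t'}$-information on $I^-$ into $L_s$-information on the left neighbour of $I^-$.

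If that does not close, the fallback is a direct count. The straddling piece $T$ has $|T|\ge\ell$, so one of $T^{(\pm)}$ has length $\ge\ell/2$. If $T^{(-)}$ is the long one, we keep it and every piece in $I^-$ has length $\ge\ell/2$, giving $L_r(E,I^-)\ge\ell/2$. If $T^{(-)}$ is short, then $T^{(+)}$ is long. In that situation I will show that $I^+$ is itself sufficiently covered, once we account for $|T^{(-)}|<\ell/2\le|I^-|$ by comparing the total length budget, and we fall back into the first case. Letting $\ell\uparrow L_r(E,I)$ gives the claim with $\gamma=\gamma(s,t,r,\delta)$.
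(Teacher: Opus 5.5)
Your preliminary reduction is sound. Upgrading to $(s,t',\delta')$-right median porosity with $t'=\frac{1+t}{2}$ via Theorem \ref{thm_main} is legitimate, since Section \ref{sec_main} does not depend on this lemma, and your Case $I^+$ argument is correct. The gap is in Case $I^-$, exactly where you expected it.

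\begin{itemize}
\item Your first idea does not help as stated. Applying porosity to $I'$ with $I'^+=I^-$ only gives information about $I'^-$, which lies outside $I$.
\item Your fallback claims that, when $T^{(-)}$ is short, $I^+$ is sufficiently covered. This is false in general. Take $r\ge\frac12$, $|T|=\ell$, and $\ell$ small compared with $|I|$. The chosen pieces inside $I^-$ may cover all of $I^-$ except a tiny neighbourhood of the midpoint, $T$ may be the only chosen piece meeting $I^+$, and $I^+\setminus T$ may contain only very short pores. Then $L_{t'}(E,I^+)/\ell$ is as small as you like. The lemma still holds there, but only because $L_r(E,I^-)\ge\ell$ directly.
\item Even when $I^+$ does carry the excess, it need not be covered to proportion $1-r$, so you cannot literally fall back into Case $I^+$.
\item The inequality $|T^{(-)}|<\ell/2\le|I^-|$ that you cite gives no coverage of $I^+$ at all.
\end{itemize}

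The missing step is a short budget argument. Write $h=|I^-|=|I^+|$, and let $A$ and $B$ be the total lengths of the chosen pieces contained in $I^-$ and $I^+$.
\begin{itemize}
\item If $A\ge(1-r)h$, then $L_r(E,I^-)\ge\ell$ directly.
\item Otherwise, the budget $A+|T^{(-)}|+|T^{(+)}|+B\ge 2(1-r)h$ gives $|T^{(+)}|+B>(1-r)h-|T^{(-)}|$.
\item Since $|T|\ge\ell>2|T^{(-)}|$, we have $|T^{(+)}|>|T^{(-)}|$. Adding the two bounds gives $|T^{(+)}|+B>\tfrac{1-r}{2}h\ge\tfrac{1-t}{2}h=(1-t')h$.
\item Every piece in $I^+$, including $T^{(+)}$, has length $>\ell/2$. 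Hence $L_{t'}(E,I^+)\ge\ell/2$, and so $L_r(E,I^-)\ge\delta'\ell/2$.
\end{itemize}
This uses $t'\ge\frac{1+r}{2}$, a second role of your choice of $t'$ that your sketch does not identify.

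With this step added, your proof is complete and considerably shorter than the paper's. The paper works only with the given $(s,t)$, so it has no slack when $r=t$. It therefore treats the straddling piece by hand:
\begin{itemize}
\item It locates the point $x$ that splits the covered length in half.
\item It replaces a short $I_k\cap I^-$ by a pore found by applying porosity to an interval $H$ with $H^+$ inside a chosen $E$-free piece and $H^-$ inside the adjacent gap. If all gaps are short, it instead counts that $I_1,\dots,I_{k-1}$ already cover more than $1-r$ of $I^-$.
\item It uses a slightly shifted interval $J$ when $r>s$, or the slack $t-r$ when $r<t$.
\end{itemize}
The price of your route is reliance on the full $A_p^+$/$\BMO^+$ machinery behind Theorem \ref{thm_main}. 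Your $\gamma$ therefore depends on $(s,t,r,\delta)$ only implicitly through $\delta'$, whereas the paper's constant is explicit.
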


\begin{proof}
	By Lemmas \ref{lem_length1}(a) and \ref{lem_porous1}(a), we may assume $E$ is closed. By Lemma \ref{lem_porous1}(b), $|E|=0$ and $L_s(I^-) \ge \delta L_t(I^+)$ for all $I\subset\R$. Let $I\subset\R$ be given. Write $I = (a,b)$ and let $c$ be the midpoint of $I$. By Lemma \ref{lem_length1}(b), $L_r(I)>0$ and there exist disjoint $I_1,\dots,I_n \subset I \setminus E$ such that $|I_i| \ge L_r(I)$ for $i=1,\dots,n$ and $\sum_{i=1}^n |I_i| \ge (1-r)|I|$. For $i = 1,\dots,n$, write $I_i = (a_i,b_i)$ and let $c_i$ be the midpoint of $I_i$. We may assume $c_1 < \cdots < c_n$. Let $\ell = \sum_{i=1}^n |I_i|$. Since $0<r<1$, we have $\ell>0$, so there is a unique $k \in \{1,\dots,n\}$ such that
	\[
	\sum_{i=1}^{k-1} |I_i| \le \frac{\ell}{2}
	\qquad \text{and} \qquad
	\sum_{i=1}^k |I_i| > \frac{\ell}{2}.
	\]
	
	We shall consider a number of cases and show that $L_r(I^-) \ge \gamma L_r(I)$ for some constant $\gamma$ (depending only on $s$, $t$, $\delta$ and $r$) in each case. By taking the minimum of these constants, one may obtain a constant which works in all cases.
	
	If $I_k \subset I^-$, then at least $1-r$ of $I^-$ is covered by $E$-free subintervals with length at least $L_r(I)$ (namely, $I_1,\dots,I_k$), so $L_r(I^-) \ge L_r(I)$.
	
	If $I_k \subset I^+$, then at least $1-r$ of $I^+$ is covered by $E$-free subintervals with length at least $L_r(I)$ (namely, $I_k,\dots,I_n$), so $L_r(I^+) \ge L_r(I)$. Since $L_r(I^-) \ge L_s(I^-) \ge \delta L_t(I^+) \ge \delta L_r(I^+)$, this implies that $L_r(I^-) \ge \delta L_r(I)$.
	
	For the rest of the proof, suppose $I_k \not\subset I^-$ and $I_k \not\subset I^+$. Then $c \in I_k$. If $\sum_{i=1}^{k-1} |I_i| = \frac{\ell}{2}$, then at least $1-r$ of $I^-$ is covered by $E$-free subintervals with length at least $L_r(I)$ (namely, $I_1,\dots,I_{k-1}$), so $L_r(I^-) \ge L_r(I)$. For the rest of the proof, suppose $\sum_{i=1}^{k-1} |I_i| < \frac{\ell}{2}$. Then there is a unique $x \in I_k$ such that
	\[
	\sum_{i=1}^{k-1} |I_i| + (x-a_k) = \frac{\ell}{2} = (b_k-x) + \sum_{i=k+1}^n |I_i|.
	\]
	
	If $c \ge c_k$ and $c \ge x$, then at least $\frac{1}{2}$ of $I_k$ and at least $\frac{1}{2}$ of the total length of $I_1,\dots,I_n$ lie in $I^-$, so at least $1-r$ of $I^-$ is covered by $E$-free subintervals with length at least $\frac{1}{2}L_r(I)$ (namely, $I_1,\dots,I_{k-1},I_k \cap I^-$) and hence $L_r(I^-) \ge \frac{1}{2}L_r(I)$.
	
	If $c \le c_k$ and $c \le x$, then at least $\frac{1}{2}$ of $I_k$ and at least $\frac{1}{2}$ of the total length of $I_1,\dots,I_n$ lie in $I^+$, so at least $1-r$ of $I^+$ is covered by $E$-free subintervals with length at least $\frac{1}{2}L_r(I)$ (namely, $I_k \cap I^+, I_{k+1},\dots,I_n$) and hence $L_r(I^+) \ge \frac{1}{2}L_r(I)$, which implies that $L_r(I^-) \ge \frac{\delta}{2}L_r(I)$.
	
	In the rest of the proof, we consider the remaining cases $x \le c \le c_k$ and $c_k \le c \le x$.
	
	\underline{Case 1}: Suppose $x \le c \le c_k$. Choose $\lambda>0$ small enough that $\lambda<\frac{\delta}{4}$ and $1/(1+\frac{3\lambda}{\delta}) > 1-r$. If $|I_k \cap I^-| \ge \lambda L_r(I)$, then at least $1-r$ of $I^-$ is covered by $E$-free subintervals with length at least $\lambda L_r(I)$ (namely, $I_1,\dots,I_{k-1},I_k \cap I^-$), so $L_r(I^-) \ge \lambda L_r(I)$. For the rest of this case, suppose $|I_k \cap I^-| \le \lambda L_r(I)$. Let $J_1=(a,a_1)$, $J_i = (b_{i-1},a_i)$ for $i=2,\dots,n$, and $J_{n+1}=(b_n,b)$. (Some of the intervals $J_i$ may be empty.) Then $J_1 \sqcup I_1 \sqcup \cdots \sqcup J_n \sqcup I_n \sqcup J_{n+1} = I$ and
	\[
	J_1 \sqcup I_1 \sqcup \cdots \sqcup J_{k-1} \sqcup I_{k-1} \sqcup J_k \sqcup (I_k \cap I^-) = I^-.
	\]
	
	\underline{Subcase 1a}: Suppose $|J_i| \ge \frac{\lambda}{\delta}L_r(I)$ for some $i \in \{1,\dots,k\}$. Note that $|I_i| \ge \frac{\lambda}{\delta}L_r(I)$. Let $H$ be the unique interval such that $H^- \subset J_i$, $H^+ \subset I_i$, and $|H^-|=|H^+|=\frac{\lambda}{\delta}L_r(I)$. Then $H^+$ is $E$-free, so $L_t(H^+) = |H^+|$ and hence $L_s(H^-) \ge \delta L_t(H^+) = \lambda L_r(I)$. By Lemma \ref{lem_length1}(b), $H^-$ contains an $E$-free interval $K$ with length at least $L_s(H^-)$. Now, $K \subset J_i$ and $|K| \ge \lambda L_r(I) \ge x-a_k$, so at least $1-r$ of $I^-$ is covered by $E$-free subintervals with length at least $\lambda L_r(I)$ (namely, $I_1,\dots,I_{k-1},K$). Hence $L_r(I^-) \ge \lambda L_r(I)$.
	
	\underline{Subcase 1b}: Suppose $|J_i| \le \frac{\lambda}{\delta}L_r(I)$ for all $i \in \{1,\dots,k\}$. If $k = 1$, then
	\[
	|I^-| = |J_1| + |I_1 \cap I^-| \le \tfrac{\lambda}{\delta}L_r(I) + \lambda L_r(I) \le \tfrac{2\lambda}{\delta}|I| < \tfrac{1}{2}|I|,
	\]
	which is a contradiction. Thus, $k \ge 2$. Let $A = \sum_{i=1}^{k-1} |I_i|$ and $B = \sum_{i=1}^k |J_i| + |I_k \cap I^-|$. Then $A+B = |I^-|$, $A \ge (k-1)L_r(I)$, and $B \le k \frac{\lambda}{\delta} L_r(I) + \lambda L_r(I) \le (k+1) \frac{\lambda}{\delta} L_r(I)$, so
	\begin{align*}
		\frac{A}{|I^-|}
		&= \frac{A}{A+B}
		\ge \frac{(k-1)L_r(I)}{(k-1)L_r(I)+B}
		\ge \frac{(k-1)L_r(I)}{(k-1)L_r(I)+(k+1)\frac{\lambda}{\delta}L_r(I)} \\
		&= \frac{1}{1+\frac{k+1}{k-1} \cdot \frac{\lambda}{\delta}}
		\ge \frac{1}{1+\frac{3\lambda}{\delta}}
		> 1-r.
	\end{align*}
	Thus, at least $1-r$ of $I^-$ is covered by $E$-free subintervals with length at least $L_r(I)$ (namely, $I_1,\dots,I_{k-1}$), so $L_r(I^-) \ge L_r(I)$.
	
	\underline{Case 2}: Suppose $c_k \le c \le x$.
	
	\underline{Subcase 2a}: Suppose $r>s$. Let $\epsilon = \frac{1}{6}(r-s)|I_k|$ and $\lambda = \frac{1}{6}(r-s)\delta$. Let $a' = a - 2\epsilon$ and $c' = c - \epsilon$. Since $\epsilon < \frac{1}{2}|I_k|$, we have $c' \in I_k$. Let $J$ be the interval such that $J^- = (a',c')$ and $J^+ = (c',b)$. Then $(c',c) \subset I_k \cap J^+$ and hence $|I_k \cap J^+| \ge \epsilon \ge \frac{1}{6}(r-s)L_r(I)$, so at least $1-r$ of $J^+$ is covered by $E$-free subintervals with length at least $\frac{1}{6}(r-s)L_r(I)$ (namely, $I_k \cap J^+,I_{k+1},\dots,I_n$). Thus, $L_r(J^+) \ge \frac{1}{6}(r-s)L_r(I)$, so $L_s(J^-) \ge \lambda L_r(I)$.
	
	Let $\ell_0$ be the total length of $I^-$ covered by $E$-free subintervals with length at least $\lambda L_r(I)$. By Lemma \ref{lem_length1}(b), at least $1-s$ of $J^-$ is covered by $E$-free subintervals with length at least $L_s(J^-)$. Since $L_s(J^-) \ge \lambda L_r(I)$, this implies that at least $1-s$ of $J^-$ is covered by $E$-free subintervals with length at least $\lambda L_r(I)$. Observe that, if $K = (\alpha,\beta)$ is an $E$-free subinterval of $J^-$ with length at least $\lambda L_r(I)$, and if $\beta \ge a + \lambda L_r(I)$, then $K \cap I^-$ is an $E$-free subinterval of $I^-$ with length at least $\lambda L_r(I)$. Using this observation, we estimate
	\begin{align*}
		\ell_0
		&\ge (1-s)|J^-| - (2\epsilon + \lambda L_r(I)) \\
		&= (1-s)|I^-| - (1+s)\epsilon - \lambda L_r(I) \\
		&\ge (1-s)|I^-| - 2\epsilon - \tfrac{\lambda}{\delta} |I_k| \\
		&= (1-s)|I^-| - \tfrac{1}{2}(r-s)|I_k| \\
		&\ge (1-r)|I^-|.
	\end{align*}
	Therefore, $L_r(I^-) \ge \lambda L_r(I)$.
	
	\underline{Subcase 2b}: Suppose $r<t$. Let $\mu = \frac{1}{2}(t-r)$. If $|I_k \cap I^+| \ge \mu L_r(I)$, then at least $1-r$ of $I^+$ is covered by $E$-free subintervals with length at least $\mu L_r(I)$ (namely $I_k \cap I^+,I_{k+1},\dots,I_n$), so $L_r(I^+) \ge \mu L_r(I)$ and hence $L_r(I^-) \ge \delta \mu L_r(I)$. Now, suppose $|I_k \cap I^+| \le \mu L_r(I)$. Let $A = |I_k \cap I^+|$ and $B = \sum_{i=k+1}^n |I_i|$. Then $A \le \mu|I|$ and $A+B \ge \frac{1}{2}(1-r)|I|$, so $B \ge \frac{1}{2}(1-t)|I|$. Thus, at least $1-t$ of $I^+$ is covered by $E$-free subintervals with length at least $L_r(I)$ (namely, $I_{k+1},\dots,I_n$), so $L_t(I^+) \ge L_r(I)$ and hence $L_r(I^-) \ge \delta L_r(I)$.
\end{proof}

\begin{proposition}
	\label{prop_extra1}
	Suppose $E\subset\R$ is $(s,t,\delta)$-right median porous. Then, for every $\sigma$ with $s<\sigma<t$ and every $C\ge1$, there exists $\gamma>0$ such that, for all $a<b<c$ with $\frac{1}{C} \le \frac{c-b}{b-a} \le C$, we have
	\[
	L_\sigma(E,(a,b)) \ge \gamma L_t(E,(b,c)).
	\]
\end{proposition}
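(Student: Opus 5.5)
The plan is to reduce the unequal split $(a,b)\,|\,(b,c)$ to a bounded number of applications of the equal-halves inequality $L_s(E,I^-)\ge\delta L_t(E,I^+)$, which holds by Lemma \ref{lem_porous1}(b). The chain of intervals will march leftward from $b$. Write $h_1=b-a$ and $h_2=c-b$, so that $C^{-1}\le h_2/h_1\le C$. By Lemmas \ref{lem_length1}(a) and \ref{lem_porous1}(a) we may assume that $E$ is closed with $|E|=0$. Then all suprema defining $L_r$ are attained, by Lemma \ref{lem_length1}(b).

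\emph{Step 1: shrink the right interval.} Fix $k\in\Z_{\ge0}$, depending only on $s,\sigma,C$, such that $\ell\coloneq 2^{-k}h_2\le \tfrac{1}{4}(\sigma-s)h_1$. Apply Lemma \ref{lem_extra1} with $r=t$, which gives $L_t(E,I^-)\ge\gamma_0 L_t(E,I)$ for all $I$. Iterating it $k$ times on the intervals $(b,b+2^{-j}h_2)$ yields
\[
L_t(E,(b,b+\ell))\ge\gamma_0^{k}L_t(E,(b,c)).
\]

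\emph{Step 2: chain leftward.} Let $K_j=(b-j\ell,\,b-(j-1)\ell)$ for $j=1,\dots,m$, where $m=\lceil h_1/\ell\rceil\le C2^k+1$; this bound depends only on $C,s,\sigma$. For $j=1$, apply the equal-halves inequality to the interval with left half $K_1$ and right half $(b,b+\ell)$. For $j\ge2$, apply it to the interval with left half $K_j$ and right half $K_{j-1}$, using $L_t\ge L_s$. Induction then gives
\[
L_s(E,K_j)\ge \delta^{j}\gamma_0^{k}L_t(E,(b,c))\ge \lambda\,L_t(E,(b,c)),\qquad \lambda\coloneq\delta^{m}\gamma_0^{k}.
\]
By Lemma \ref{lem_length1}(b), each $K_j$ contains disjoint $E$-free intervals of length at least $\lambda L_t(E,(b,c))$ covering at least $(1-s)|K_j|$.

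\emph{Step 3: assemble on $(a,b)$.} This is the delicate bookkeeping step. The $K_j$ are disjoint, they cover $(a,b)$, and only $K_m$ sticks out past $a$, by less than $\ell$. Collect all the free intervals chosen in Step 2 and intersect them with $(a,b)$. Only intervals in $K_m$ can be cut, and at most one of them is cut at $a$. Discard that one if its remnant is shorter than $\lambda L_t(E,(b,c))$; it lies inside $K_m$, so it has length at most $\ell$. The uncovered measure inside $(a,b)$ is then at most
\[
s\,m\ell+\ell\le s(h_1+\ell)+\ell\le s h_1+2\ell\le \sigma h_1,
\]
by the choice of $\ell$. Hence $L_\sigma(E,(a,b))\ge\lambda L_t(E,(b,c))$, and we may take $\gamma=\lambda$, which depends only on $s,\sigma,t,\delta,C$.

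The main obstacle is Step 1. Unlike $L_s$ of an enlarged interval, $L_t$ of a subinterval is not monotone, so we need Lemma \ref{lem_extra1} (with $r=t$) to control $L_t$ on small initial pieces of $(b,c)$. Everything else is a bounded iteration. The margin $\sigma>s$ is used only to absorb the overlap and clipping near $a$ in Step 3.
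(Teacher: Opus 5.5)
Your proof is correct and takes essentially the paper's route: you shrink $(b,c)$ by iterating Lemma \ref{lem_extra1} with $r=t$, then march leftward from $b$ through equal-length intervals using $L_s(E,I^-)\ge\delta L_t(E,I^+)$ and $L_t\ge L_s$. The only differences are bookkeeping, plus one small fix: the paper lets the number of halvings vary so the chain has between $k$ and $2k$ pieces fully inside $(a,b)$ and uses $\frac{k}{k+1}(1-s)>1-\sigma$, whereas you fix the number of halvings and clip the overhanging piece at $a$; and since your $m$ depends on $h_1/h_2$, you should set $\gamma=\delta^{\lfloor C2^k\rfloor+1}\gamma_0^{k}\le\lambda$ rather than $\gamma=\lambda$ so that $\gamma$ is uniform.
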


\begin{proof}
	By Lemmas \ref{lem_length1}(a) and \ref{lem_porous1}(a), we may assume $E$ is closed. By Lemma \ref{lem_porous1}(b), $|E|=0$ and $L_s(I^-) \ge \delta L_t(I^+)$ for all $I\subset\R$. By Lemma \ref{lem_extra1}, there exists $0<\epsilon<1$ such that $L_t(I^-) \ge \epsilon L_t(I)$ for all $I\subset\R$. Choose $k \in \Z_{>0}$ large enough that $k>C$ and $\frac{k}{k+1}>\frac{1-\sigma}{1-s}$, and let $\gamma = \delta^{2k} \epsilon^{1 + \log_2(kC)}$.
	
	Let $a<b<c$ with $\frac{1}{C} \le \frac{c-b}{b-a} \le C$ be given. Then $c-b > \frac{1}{k}(b-a)$, so there is a unique $n \in \Z_{>0}$ such that
	\begin{equation}
		\label{eq_extra1}
		\frac{1}{2k}(b-a) < 2^{-n}(c-b) \le \frac{1}{k}(b-a).
	\end{equation}
	Note that $2^{n-1} < k \cdot \frac{c-b}{b-a} \le kC$ and hence $n < 1 + \log_2(kC)$. Let $I_0 = (b,x)$, where $x-b = 2^{-n}(c-b)$. By applying the inequality $L_t(I^-) \ge \epsilon L_t(I)$, $n$ times, we get
	\begin{equation}
		\label{eq_extra2}
		L_t(I_0) \ge \epsilon^n L_t((b,c)).
	\end{equation}
	For each $i \in \Z_{>0}$, let $I_i = J_i^-$, where $J_i$ is the unique interval such that $J_i^+ = I_{i-1}$. Let $m$ be the largest index such that $I_m \subset (a,b)$. Since $|I_i| = 2^{-n}(c-b)$ for all $i \ge 0$, \eqref{eq_extra1} implies that $k \le m < 2k$. By right median porosity, we have $L_s(I_i) \ge \delta L_t(I_{i-1})$ for all $i \ge 1$. Since $L_t(I_i) \ge L_s(I_i)$, this inequality may be iterated to get, for $i=1,\dots,m$,
	\begin{equation}
		\label{eq_extra3}
		L_s(I_i) \ge \delta^i L_t(I_0) \ge \delta^m L_t(I_0).
	\end{equation}
	Combining \eqref{eq_extra2} and \eqref{eq_extra3}, we get, for $i = 1,\dots,m$,
	\[
	L_s(I_i) \ge \delta^m \epsilon^n L_t((b,c)) \ge \gamma L_t((b,c)).
	\]
	By Lemma \ref{lem_length1}(b), at least $1-s$ of $I_i$ is covered by $E$-free subintervals with length at least $L_s(I_i)$. Hence at least $1-s$ of $I_1 \sqcup \cdots \sqcup I_m$ is covered by $E$-free subintervals with length at least $\gamma L_t((b,c))$. By the maximality of $m$, we have $m|I_0| \le b-a < (m+1)|I_0|$, so
	\[
	\frac{|I_1 \sqcup \cdots \sqcup I_m|}{b-a}
	> \frac{m|I_0|}{(m+1)|I_0|}
	\ge \frac{k}{k+1}
	> \frac{1-\sigma}{1-s}.
	\]
	Thus, at least $1-\sigma$ of $(a,b)$ is covered by $E$-free subintervals with length at least $\gamma L_t((b,c))$, so $L_\sigma((a,b)) \ge \gamma L_t((b,c))$, as desired.
\end{proof}

Our final result in this paper (Proposition \ref{prop_extra2}) helps to illuminate the relationship between the notions of porosity considered in this paper and their dyadic analogues. For each notion of porosity listed in Definition \ref{def_porous}, one may formulate a dyadic analogue in which, instead of arbitrary subintervals of $I$, one considers only dyadic subintervals of $I$. In fact, weak porosity and median porosity are defined in this way in \cite{ALMV} and \cite{PU}, respectively. Using Proposition \ref{prop_extra2} and Lemma \ref{lem_porous1}(b) (and its analogues for the other notions of porosity), it is easy to see that, for each notion of porosity, the definition with dyadic subintervals is equivalent to the definition with arbitrary subintervals, if we allow changes in the values of $s$, $t$ and $\delta$. Furthermore, the changes in $s$ and $t$ may be as small as desired.

\begin{proposition}
	\label{prop_extra2}
	For any $E \subset \R$, $0 < s \le 1$, and $I \subset \R$, let $L_s^d(E,I)$ be defined in the same way as $L_s(E,I)$, except that $I_1,\dots,I_k$ are now required to be dyadic subintervals of $I$. Then the following hold:
	\begin{enumerate}[label=\upshape(\alph*)]
		\item $L_1^d(E,I) \le L_1(E,I) \le 4 L_1^d(E,I)$.
		\item If $0<s<t<1$, then $L_s^d(E,I) \le L_s(E,I) \lesssim_{s,t} L_t^d(E,I)$.
	\end{enumerate}
\end{proposition}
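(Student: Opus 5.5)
The lower bounds $L_1^d \le L_1$ and $L_s^d \le L_s$ are immediate, since every admissible family of dyadic subintervals is also an admissible family of open subintervals of $I$. The work is in the upper bounds. For both parts I would fix a family $I_1,\dots,I_k \subset I\setminus E$ of disjoint intervals with $|I_i|\ge \ell$, where $\ell$ is close to $L_s(E,I)$, and replace each $I_i$ by the maximal dyadic subintervals of $I$ it contains. The basic covering fact is standard. Let $J\subset I$ be an open interval with $|J|\ge 4\cdot 2^{-n}|I|$. Then $J$ contains at least two consecutive dyadic intervals of $\D_n(I)$, hence at least one of length $2^{-n}|I|\ge |J|/8$. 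Sharpening this, $J$ contains a dyadic interval of length at least $|J|/4$: take $n$ minimal with $2^{-n}|I| \le |J|/2$, so $2^{-n}|I| > |J|/4$. Then $J$ has length at least twice the mesh, so it fully contains some cell of $\D_n(I)$.

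This gives (a) directly. If $J\subset I\setminus E$ has $|J|$ close to $L_1(E,I)$, it contains a dyadic $E$-free interval of length $\ge |J|/4$, so $L_1(E,I)\le 4L_1^d(E,I)$. (The case $L_1=0$ is trivial.)

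For (b), fix $\ell<L_s(E,I)$ and the family $I_1,\dots,I_k$ with total length $\ge(1-s)|I|$. Choose $n$ with $2^{-n}|I|\approx \eta\ell$, where $\eta=\eta(s,t)$ is small, say $\eta(1-?)$ chosen so that $2\cdot 2^{-n}|I| \le \frac{t-s}{1-s}\,\ell$. Consider the dyadic intervals of $\D_n(I)$ contained in some $I_i$. They cover all of $I_i$ except at most two partial cells at its ends, so they cover at least $|I_i|-2\cdot2^{-n}|I| \ge (1-\frac{t-s}{1-s})|I_i|$, using $|I_i|\ge\ell$. Summing over $i$, the covered length is at least $(1-s)(1-\frac{t-s}{1-s})|I|=(1-t)|I|$. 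All these cells are disjoint, $E$-free, dyadic, and of length $2^{-n}|I|\gtrsim_{s,t}\ell$. Hence $L_t^d(E,I)\gtrsim_{s,t}\ell$, and letting $\ell\uparrow L_s(E,I)$ gives the claim.

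The main obstacle is bookkeeping in choosing $n$. It must satisfy $2^{-n}|I|\le \frac{t-s}{2(1-s)}\ell$ while staying within a factor $2$ of that bound, which is possible since $\ell\le|I|$ allows $n\ge0$. One also has to handle the degenerate case $L_s(E,I)=0$, where there is nothing to prove. Using same-generation cells avoids any overlap issues between pieces coming from different $I_i$.
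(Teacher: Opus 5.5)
Your proof is correct and is essentially the paper's argument. In (a) you find a full cell, with mesh in $(|J|/4,|J|/2]$, inside a near-maximal $E$-free interval $J$. In (b) you pass to same-generation cells of mesh comparable to $\ell$ and lose at most two partial cells per $I_i$, as the paper does. The paper fixes the generation by $2^{-n}|I|<\ell\le 2^{1-n}|I|$ and refines by a fixed $m$ with $(1-2^{1-m})(1-s)>1-t$, whereas you choose the mesh directly as about $\frac{t-s}{2(1-s)}\ell$. Just delete the stray placeholder ``$\eta(1-?)$'' and the unneeded preliminary $|J|/8$ claim.
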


\begin{proof}
	(a) The first inequality is trivial since $L_1^d(E,I)$ is defined as the supremum of a smaller collection than $L_1(E,I)$. To prove the second inequality, assume $L_1(E,I) > 0$ and let $0 < \ell < L_1(E,I)$. Then there exists $J \subset I \setminus E$ such that $\ell < |J| \le L_1(E,I)$. Since $0 < |J| \le |I|$, there is a unique $n \in \Z_{>0}$ such that $2^{-n}|I| < |J| \le 2^{1-n}|I|$. Since $2^{-n-1}|I| < \frac{1}{2}|J|$, there exists $K \in \D_{n+1}(I)$ such that $K \subset J$. Then $K \subset I \setminus E$, so $L_1^d(E,I) \ge |K| = 2^{-n-1}|I| > \frac{1}{4}\ell$. Letting $\ell \to L_1(E,I)$, we get the desired inequality.
	
	(b) As in (a), the first inequality is trivial. To prove the second inequality, assume $L_s(E,I) > 0$ and let $0 < \ell < L_s(E,I)$. Then there exist disjoint $I_1,\dots,I_k \subset I \setminus E$ such that $|I_i| \ge \ell$ for $i = 1,\dots,k$ and $\sum_{i=1}^k |I_i| \ge (1-s)|I|$. Since $0 < \ell < |I|$, there is a unique $n \in \Z_{>0}$ such that $2^{-n}|I| < \ell \le 2^{1-n}|I|$.
	
	Choose $m \in \Z_{>0}$ large enough that $1-2^{1-m} > \frac{1-t}{1-s}$. Then, for $i = 1,\dots,k$, we have
	\[
	\sum_{\substack{J \in \D_{n+m}(I) \\ J \subset I_i}} |J|
	\ge |I_i| - 2 \cdot 2^{-n-m} |I|
	> (1-2^{1-m}) |I_i|.
	\]
	Hence
	\[
	\sum_{\substack{J \in \D_{n+m}(I) \\ J \subset \bigsqcup_{i=1}^k I_i}} |J|
	\ge (1-2^{1-m}) \sum_{i=1}^k |I_i|
	> (1-t)|I|.
	\]
	The intervals $J \in \D_{n+m}(I)$ such that $J \subset \bigsqcup_{i=1}^k I_i$ are disjoint $E$-free dyadic subintervals of $I$, each of length $2^{-n-m}|I|$, and we have shown that they cover more than $1-t$ of $I$, so $L_t^d(E,I) \ge 2^{-n-m}|I| \ge 2^{-m-1}\ell$. Letting $\ell \to L_s(E,I)$, we get $L_t^d(E,I) \ge 2^{-m-1} L_s(E,I)$.
\end{proof}

\section{Acknowledgements}

A.\ C.\ Goksan was partially supported by an NSERC Canada Graduate Scholarship. I.\ Uriarte-Tuero was partially supported by an NSERC Discovery Grant.

\bibliographystyle{plain}
\bibliography{bibliography}

\end{document}